\numberwithin{equation}{section}
\renewcommand*{\backref}[1]{}
\renewcommand*{\backrefalt}[4]{[{\tiny%
		\ifcase #1 Not cited.%
		\or Cited on page~#2.%
		\else Cited on pages #2.%
		\fi%
	}]}
\newcommand{\fd}{\mathsf{fd}}
\renewenvironment{proof}{{ \textbf{Proof}.}}{\qed}
\newtheorem{Thm}{Theorem}[section]
\newtheorem{Lem}[Thm]{Lemma}
\newtheorem{Def}[Thm]{Definition}
\newtheorem{Cor}[Thm]{Corollary}
\newtheorem{Prop}[Thm]{Proposition}
\newtheorem{Ex1}[Thm]{Example}
\newtheorem{Rem1}[Thm]{Remark}
\newcommand{\cone}{\mathrm{Cone}}
\newcommand{\Hom}{\mathrm{Hom}}
\newcommand{\thick}{\mathrm{thick}\,}
\newcommand{\add}{\mathrm{add}}
\newcommand{\per}{\mathrm{per}}
\newcommand{\Ext}{\mathrm{Ext}}
\newcommand{\End}{\mathrm{End}}
\newcommand{\obj}{\mathrm{obj}}
\newcommand{\bijar}[1][]{%
	\ar[#1]
	\ar@<0.7ex>@{}[#1]|-*[@]{\sim}}
\newcommand{\pvd}{\mathrm{pvd}}
\newenvironment{Rem}{\begin{Rem1}\rm}{\end{Rem1}}
\newenvironment{Ex}{\begin{Ex1}\rm}{\end{Ex1}}
\newcommand{\Si}{\Sigma}
\newcommand{\ra}{\rightarrow}
\newcommand{\iso}{\xrightarrow{_\sim}}
\newcommand{\id}{\mathbf{1}}
\newcommand{\ca}{{\mathcal A}}
\newcommand{\cc}{{\mathcal C}}
\newcommand{\cd}{{\mathcal D}}
\newcommand{\cf}{{\mathcal F}}
\newcommand{\ch}{{\mathcal H}}
\newcommand{\cm}{{\mathcal M}}
\newcommand{\co}{{\mathcal O}}
\newcommand{\cp}{{\mathcal P}}
\newcommand{\cq}{{\mathcal Q}}
\newcommand{\cs}{{\mathcal S}}
\newcommand{\ct}{{\mathcal T}}
\newcommand{\cu}{{\mathcal U}}
\newcommand{\cv}{{\mathcal V}}
\newcommand{\cz}{{\mathcal Z}}
\begin{document}
	
	\title{Silting reduction, relative AGK's construction and Higgs construction}
	
	

	\author{Yilin Wu}
	\address{
University of Luxembourg\\                           
6, avenue de la Fonte, L-4364\\
Esch-sur-Alzette\\            Luxembourg
}
\email{yilin.wu@uni.lu}
	
	
	\dedicatory{}
	
	\keywords{Silting reduction, relative AGK's constriction, Higgs constriction, Calabi--Yau reduction.}
	
	\begin{abstract}

We introduce the notion of a Calabi--Yau quadruple as a generalization of Iyama--Yang's Calabi--Yau triple. 
For each \((d+1)\)-Calabi--Yau quadruple, we show that the associated Higgs category is a \(d\)-Calabi--Yau Frobenius extriangulated category, which moreover admits a canonical \(d\)-cluster-tilting subcategory. Concrete examples arise from the construction of relative cluster categories and Higgs categories in the setting of ice quivers with potentials, as well as from the singularity category of an isolated singularity.  As an application, we prove that both the relative Amiot--Guo--Keller's construction and the Higgs construction of a \((d+1)\)-Calabi--Yau quadruple take silting reduction to Calabi--Yau reduction.

	\end{abstract}
	
	\maketitle
	\tableofcontents
	
	\section{Introduction}
	Triangulated and derived categories appear ubiquitously throughout mathematics, with applications in representation theory, algebraic geometry, algebraic topology, and mathematical physics. An important tool in the study of derived categories is provided by silting reduction and Calabi–Yau reduction \cite{IyamaYoshino2008,IY2018}. The concept of silting appeared for the first time in a paper of Keller--Vossieck\cite{KV1988}. Since then, silting theory and silting reduction have played significant roles in the study of triangulated categories, in particular derived categories \cite{IY2018}. Moreover, silting reduction has been shown to be closely related to Calabi–Yau reduction and has found wide applications in representation theory. In \cite{IY2018}, they introduced the notion of a Calabi--Yau triple and showed that Amiot--Guo--Keller’s construction (\cite{Am2008,Guo2011}) is a direct passage from tilting theory to cluster tilting theory.

	The aim of this paper is to introduce the notion of a Calabi--Yau quadruple as a generalization of Iyama--Yang's Calabi--Yau triple \cite{IY2018}. For each \((d+1)\)-Calabi--Yau quadruple, we show that the associated Higgs category is a Frobenius \(d\)-Calabi--Yau extriangulated category, which moreover admits a canonical \(d\)-cluster-tilting subcategory. 
	Let $\ct$ be a triangulated category, $\cm$ a subcategory of $\ct$, $\cp$ a subcategory of $\cm$ and $\ct^{\fd}\subseteq\ct$ a triangulated subcategory such that $(\ct,\ct^{\fd},\cm,\cp)$ is a $(d + 1)$-Calabi--Yau quadruple (see Definition \ref{Def:Calabi--Yau quadruple}). In particular, if we take $\cp$ to be the zero category, then $(\ct,\ct^{\fd},\cm)$ is a Calabi--Yau triple in the sense of Iyama--Yang.

	Applying relative Amiot--Guo--Keller’s construction, we obtain a triangulated category $\cc=\ct/\ct^{\fd}$ (AGK's relative cluster category) in which $\cp$ becomes a silting subcategory. Then the corresponding Higgs category $\ch$ is defined as
	$$\ch=(\cp[<0])^{\perp_{\cc}}\cap^{\perp_{\cc}}\!(\cp[>0]).$$ We show that the Higgs category $\ch$ is a $d$-Calabi--Yau Frobenius extriangulated category with projective-injective objects $\cp$ and $\cm$ is $d$-cluster-tilting subcategory of $\ch$, see Theorem \ref{Thm:Higgs is Fro}. And the stable category $\underline{\ch}$ is equivalent to the AGK's cluster category $\cu/\cu^{\fd}$, where $\cu=\ct/\thick(\cp)$ and $\cu^{\fd}=\ct^{\fd}\cap\thick(\cp)^{\perp}$. This generalize the results of \cite{Wu2023,KW2307} in the setting of ice quivers with potentials. Examples also arise from the singularity category of an isolated singularity, see Example \ref{Ex:isolated sing} and Example \ref{Ex:isolated singcat}.
	
	Let $\cq$ be a functorially finite subcategory of $\cm$. Then $\cq$ becomes a $d$-rigid subcategory of $\cc$. On the one hand, let $\ch'_{\cq}$ be the following extension closed subcategory of $\ch$
	$$\ch'_{\cq}=\{M\in\ch\,|\,\Hom_{\cc}(M,\cq[1]*\cq[2]*\cdots\cq[d-1])=0\}\subseteq\ch.$$ Then $\ch'_{\cq}$ is a Frobenius extriangulated category with projective-injective objects $\cp\cup\cq$, see Proposition \ref{Prop:H' is Fro}. The Calabi--Yau reduction of $\ch$ with respect to $\cq$ is defined as the additive quotient
	$\frac{\ch'_{\cq}}{[\cq]},$
	which is a $d$-Calabi--Yau Frobenius Extriangulated category and in which $\cm$ becomes a $d$-cluster-tilting subcategory.

	On the other hand, we first form the silting reduction of $(\ct,\ct^{\fd},\cm,\cp)$ with respect to $\cq$. We obtain a new $(d+1)$-Calabi--Yau quadruple $(\cv,\cv^{\fd},\cm,\cp)$, where $\cv=\ct/\thick(\cq)$ and $\cv^{\fd}=\ct^{\fd}\cap\thick(\cq)^{\perp}$. The corresponding relative cluster category and Higgs category are denote by $\cc_{\cq}$ and $\ch_{\cq}$ receptively. We prove that the two resulting $d$-Calabi--Yau Frobenius extriangulated categories $\ch_{\cq}$ and $\ch'_{\cq}/[\cq]$ are equivalent as Frobenius extriangulated categories, see Theorem \ref{Thm:CY reduction of Higgs}. Accordingly, Higgs construction takes silting reduction to Calabi--Yau reduction. The operations can be illustrated in the commutative diagram below.
	\[
	\SelectTips{cm}{10}
	\begin{xy}
		0;<1.1pt,0pt>:<0pt,-1.1pt>::
		(80,0) *+{(\ct,\ct^{\fd},\cm,\cp)} ="0", (0,50) *+{(\cv,\cv^{\fd},\cm,\cp)} ="1", (160,50) *+{\ch} ="2",
		(65,100) *+{\ch_{\cq}} ="3", (85,100) *+{\simeq} ="4", (100,100) *+{\frac{\ch'_{\cq}}{[\cq]}} ="5",
		(145,11) *+{\text{Higgs }}="6",
		(153,22) *+{\text{construction}}="7",
		(8,15) *+{\text{silting}}="8",
		(8,25) *+{\text{reduction}}="9",
		(5,75) *+{\text{Higgs}}="10",
		(7,85) *+{\text{construction}}="11",
		(163,75) *+{\text{Calabi--Yau }}="12",
		(163,85) *+{\text{reduction }}="13",
		(220,0) *+{},
		"0", {\ar@{~>}, "1"}, {\ar@{~>}, "2"},
		"1", {\ar@{~>}, "3"}, 
		"2", {\ar@{~>}, "5"},
	\end{xy}
	\]
	
In the following, we assume that $\ct$ is an algebraic triangulated category; that is, there exists a pretriangulated dg category $\ct_{dg}$ with $H^{0}(\ct_{dg}) \simeq \ct$. In this setting, the AGK's relative cluster category and the Higgs category also admit dg enhancements, which we denote by $\cc_{dg}$ and $\ch_{dg}$, respectively.  

Let $\ch'_{\cq,dg}$ be the full dg subcategory of $\cc_{dg}$ consisting of
objects in $\ch'_{\cq}$. It is an extension closed subcategory of $\cv_{dg}$.	Then we show that there is an exact quasi-isomorphism of exact dg categories (see Theorem \ref{Thm:dg exact quasi-isomorphism of H'})
$$\frac{\tau_{\leqslant0}\ch'_{\cq,dg}}{\cq_{dg}}\simeq\tau_{\leqslant0}\ch_{\cq,dg},$$
where the left hand-side quotient is the Drinfeld dg quotient and $\tau_{\leqslant0}$ is the mild truncation functor. In \cite{Chen2306}, Xiaofa Chen introduced the notion of the derived category of a dg exact category. After applying the construction of derived category to the above exact quasi-isomorphism, we obtain the following quasi-isomorphism of pretriangulated dg categories (see Theorem \ref{Thm:reduction of cc})
$$\cd^{b}_{dg}(\ch'_{\cq,dg})/\thick_{dg}(\cq)\simeq\cc_{\cq,dg}.$$
In particular, we have an equivalence of triangulated categories
$$\cd^{b}(\ch'_{\cq,dg})/\thick(\cq)\simeq\cc_{\cq}=\cv/\cv^{\fd}.$$

This can be illustrated by the following commutative diagram of operations.
	
\[
\SelectTips{cm}{10}
\begin{xy}
	0;<1.1pt,0pt>:<0pt,-1.1pt>::
	(80,0) *+{(\ct,\ct^{\fd},\cm,\cp)} ="0", (0,50) *+{(\cv,\cv^{\fd},\cm,\cp)} ="1", (160,50) *+{\cc=\ct/\ct^{\fd}} ="2",
	(65,100) *+{\cv/\cv^{\fd}} ="3", (85,100) *+{\simeq} ="4", (110,100) *+{\frac{\cd^{b}(\ch'_{\cq,dg})}{\thick(\cq)}} ="5",
	(145,11) *+{\text{Relative AGK's}}="6",
	(153,22) *+{\text{construction}}="7",
	(8,15) *+{\text{silting}}="8",
	(8,25) *+{\text{reduction}}="9",
	(-5,75) *+{\text{Relative AGK's}}="10",
	(-4,85) *+{\text{construction}}="11",
	(165,75) *+{\text{Calabi--Yau }}="12",
	(163,85) *+{\text{reduction}}="13",
	(220,0) *+{},
	"0", {\ar@{~>}, "1"}, {\ar@{~>}, "2"},
	"1", {\ar@{~>}, "3"}, 
	"2", {\ar@{~>}, "5"},
\end{xy}
\]

	\section*{Acknowledgments}
	The author would like to thank Xiaofa Chen for many interesting discussions and useful comments. He is very grateful to Martin Kalck, Junyang Liu, Matthew Pressland and Dong Yang for pointing out useful references and useful comments.
	
\section{Preliminaries}\label{Preliminaries}

Let $ \mathcal{T} $ be a triangulated category, and denote by $[1]$ the shift functor. We say that $\ct$ is \emph{idempotent complete} if any idempotent morphism $e\colon X\ra X$ has a kernel.

Let $\cs$ be a full subcategory of $\ct$. For an object $X$ of $\ct$, a morphism $f:S\ra X$ is called a \emph{right $\cs$-approximation} of $X$ if $S\in\cs $ and $\Hom_{\ct}(S', f)$ is surjective for any $S'\in\cs$. We say that $\cs$ is \emph{contravariantly finite} if every object in $\ct$ has a right $\cs$-approximation. Dually, one defines left $\cs$-approximations and covariantly finite subcategories. We say that $\cs$ is \emph{functorially finite} if it is both contravariantly finite and covariantly finite.

We call $\cs$ a \emph{thick subcategory} of $\ct$ if it is a triangulated subcategory of $\ct$ which is closed under taking direct summands. Denote by $\thick_{\ct}(\cs)$ (or simply $\thick(\cs)$) the smallest thick subcategory of $\ct$ which contains $\cs$. Let $\cs'$ be another full subcategory of $\ct$. Define $$\cs*\cs'=\{X\in\ct\,|\,\text{there is a triangle $S\ra X\ra S'\ra S[1]$ with $S\in\cs$ and $S'\in\cs'$}\}.$$

A full subcategory $ \mathcal{P} $ of $ \mathcal{T} $ is \emph{presilting} if $ \Hom_{\mathcal{T}}(\cp,\cp[i])=0 $ for any $ i>0 $. It is \emph{silting} if in addition $ \mathcal{T}=\thick\mathcal{P} $. An object $ P $ of $ \mathcal{T} $ is \emph{presilting} if $ \add P $ is a presilting subcategory and \emph{silting} if $ \add P $ is a silting subcategory.

For a silting subcategory $\cm$ in $\ct$ satisfying $\cm =\add(\cm)$, we have a co-t-structure $(\ct_{\geqslant0},\ct_{\leqslant0})$ on $\ct$ by \cite[Proposition 2.8]{IY2018}, where 
$$ \mathcal{T}_{\geqslant l}=\mathcal{T}_{>l-1}\coloneqq\bigcup_{i\geqslant0}\mathcal{M}[-1-i]*\cdots*\mathcal{M}[-l-1]*\mathcal{M}[-l],$$
$$ \mathcal{T}_{\leqslant l}=\mathcal{T}_{<l+1}\coloneqq\bigcup_{i\geqslant0}\mathcal{M}[-l]*\mathcal{M}[-l+1]\cdots*\mathcal{M}[-l+i].$$

\subsection{Silting reductions}\label{Section:Reductions}

	
	Let $ \mathcal{P} $ be a presilting subcategory of $ \mathcal{T} $. Let $ \mathcal{S} $ be the thick subcategory $\thick\mathcal{P} $ of $ \ct $ and $ \mathcal{U} $ the quotient category $ \mathcal{T}/\mathcal{S} $. We call $ \mathcal{U} $ the \emph{silting reduction} of $ \mathcal{T} $ with respect to $ \mathcal{P} $ (see \cite{IY2018}). For an integer $ l $, there is a bounded co-t-structure $ (\mathcal{S}_{\geqslant l},\mathcal{S}_{\leqslant l}) $ on $ \mathcal{S} $, where
	$$ \mathcal{S}_{\geqslant l}=\mathcal{S}_{>l-1}\coloneqq\bigcup_{i\geqslant0}\mathcal{P}[-1-i]*\cdots*\mathcal{P}[-l-1]*\mathcal{P}[-l],$$
	$$ \mathcal{S}_{\leqslant l}=\mathcal{S}_{<l+1}\coloneqq\bigcup_{i\geqslant0}\mathcal{P}[-l]*\mathcal{P}[-l+1]\cdots*\mathcal{P}[-l+i].$$
	
	Let $ \cz $ be the following subcategory of $ \ct $
	$$ \mathcal{Z}={}^{\perp_{\mathcal{T}}}(\mathcal{S}_{<0})\cap(\mathcal{S}_{>0})^{\perp_{\mathcal{T}}}={}^{\perp_{\mathcal{T}}}(\mathcal{P}[>0])\cap(\mathcal{P}[<0])^{\perp_{\mathcal{T}}} .$$


	\begin{Lem}\cite[Lemma 3.4]{IY2018}\label{silting reduction}
		The composition $ \mathcal{Z}\subset\mathcal{T}\xrightarrow{\rho}\mathcal{U} $ of natural functors induces a fully faithful embedding
		$$ \overline{\rho}\colon\mathcal{Z}/[\mathcal{P}]\hookrightarrow\mathcal{U}. $$
	\end{Lem}
	
	\begin{Rem}
		In our case, the category $\mathcal{P}$ may not satisfy condition (P1) in \cite[Section 3.1]{IY2018}. We only have a fully faithful embedding
		$
		\overline{\rho}\colon \mathcal{Z}/[\mathcal{P}] \hookrightarrow \mathcal{U}.$
		
	\end{Rem}

	\begin{Thm}\cite[Theorem 4.2]{IYY2008}\label{Tiangule structure for Z/P}
		The category $ \mathcal{Z}/[\mathcal{P}] $ has the structure of a triangulated category with respect to the following shift functor and triangles:
		\begin{itemize}
			\item[(a)] For $ X\in\mathcal{Z} $, we take a triangle $$ X\xrightarrow{l_{X}}P_{X}\longrightarrow X\langle1\rangle\longrightarrow X[1] $$
			with a (fixed) left $ \mathcal{P} $-approximation $ l_{X} $. Then $ \langle1\rangle $ gives a well-defined auto-equivalence of $ \mathcal{Z}/[\mathcal{P}] $, which is the shift functor of $ \mathcal{Z}/[\mathcal{P}] $.
			\item[(b)] For a triangle $ X\longrightarrow Y\longrightarrow Z\longrightarrow X[1] $ with $ X, Y, Z\in\mathcal{Z} $, take the following commutative diagram of triangles
			\begin{align*}
				\xymatrix{
					X\ar[r]^{f}\ar@{=}[d]&Y\ar[r]^{g}\ar[d]&Z\ar[r]^{g}\ar[d]^{a}& X[1]\ar@{=}[d]\\
					X\ar[r]^{l_{X}}&P_{X}\ar[r]&X\langle1\rangle\ar[r]&X[1].
				}
			\end{align*}
			Then we have a complex $ X\xrightarrow{\overline{f}}Y\xrightarrow{\overline{g}}Z\xrightarrow{\overline{a}}X\langle1\rangle .$ We define triangles in $ \mathcal{Z}/[\mathcal{P}] $ as the complexes which are isomorphic to complexes obtained in this way.
		\end{itemize}
	\end{Thm}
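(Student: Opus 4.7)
The plan is to follow the classical Iyama--Yoshino strategy of \cite{IYY2008}, adapted to the silting setting. First I would verify that the shift $\langle 1\rangle$ is a well-defined endofunctor of $\cz/[\cp]$. By condition (P1) and the inclusion $\cz\subseteq{}^{\perp_{\ct}}\cs_{<0}$, every $X\in\cz$ admits a left $\cp$-approximation $l_{X}\colon X\to P_{X}$, fitting into a triangle $X\to P_{X}\to X\langle 1\rangle\to X[1]$. Any two such approximations differ by a morphism factoring through $\cp$, so a triangulated diagram chase shows that the resulting objects $X\langle 1\rangle$ become canonically isomorphic in $\cz/[\cp]$. The key technical point is checking that $X\langle 1\rangle$ actually lies in $\cz$: applying $\Hom_{\ct}(-,\cp[i])$ and $\Hom_{\ct}(\cp,-[i])$ to the defining triangle and using that $X,P_{X}\in\cz$, together with the presilting property of $\cp$ and the surjectivity built into the approximation $l_{X}$, shows that $X\langle 1\rangle$ belongs to both $(\cp[<0])^{\perp_{\ct}}$ and ${}^{\perp_{\ct}}(\cp[>0])$.

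Next I would show that $\langle 1\rangle$ is an auto-equivalence. Dually, using $\cz\subseteq\cs_{>0}^{\perp_{\ct}}$ together with the contravariantly finite half of (P1), every $X\in\cz$ admits a right $\cp$-approximation $r_{X}\colon P^{X}\to X$, producing a triangle $X\langle -1\rangle\to P^{X}\to X\to X\langle -1\rangle[1]$ with $X\langle -1\rangle\in\cz$ by the same argument. A standard application of the octahedral axiom in $\ct$, combined with the vanishing $\Hom_{\ct}(\cp,\cp[i])=0$ for $i>0$, then yields natural isomorphisms $X\langle 1\rangle\langle -1\rangle\cong X\cong X\langle -1\rangle\langle 1\rangle$ in $\cz/[\cp]$. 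Functoriality of $\langle 1\rangle$ on morphisms follows from the universal property of left $\cp$-approximations modulo factors through $\cp$.

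For the triangle structure I would verify Verdier's axioms in turn. For (TR1), given $f\colon X\to Y$ in $\cz$, take the cone $X\to Y\to C\to X[1]$ in $\ct$; although $C$ need not lie in $\cz$, the co-t-structures furnished by Proposition \ref{Presilting to co-t-structure} allow one to replace $C$ by an object $Z\in\cz$ through a suitable truncation argument against $\thick(\cp)$, and one then checks that the resulting triangle in $\cz$ reproduces the construction in~(b). Rotation (TR2) follows from a diagram chase comparing the rotated candidate triangle with the defining triangle for $X\langle 1\rangle$. The morphism lifting axiom (TR3) is a standard consequence of (TR1), (TR2) and the octahedral axiom together with the vanishing properties defining $\cz$.

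The main obstacle will be the octahedral axiom (TR4). Given two composable morphisms in $\cz$, one assembles the usual $3\times 3$ octahedral diagram in $\ct$, and must then carefully juggle left $\cp$-approximations for each of the three objects so that the resulting connecting morphisms $\overline{a}$ agree with those produced by construction~(b). The delicate bookkeeping lies in verifying that the octahedron in $\ct$ induces a genuine octahedron in $\cz/[\cp]$ modulo $\cp$-factors; this is where the presilting hypothesis on $\cp$ and the defining orthogonality of $\cz$ must both be invoked to kill all the possible obstructions simultaneously.
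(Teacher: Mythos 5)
This statement is not proved in the paper at all: it is quoted verbatim from Iyama--Yoshino \cite[Theorem 4.2]{IYY2008}, so the only meaningful comparison is with the original argument there. Your outline follows the same overall strategy as \cite{IYY2008} (construct $\langle1\rangle$ from left $\cp$-approximations, check $X\langle1\rangle\in\cz$, build $\langle-1\rangle$ from right approximations, then verify the Verdier axioms), and the parts you actually carry out are correct: existence of $l_{X}$ from (P1), the vanishing computations showing $X\langle1\rangle\in{}^{\perp_{\ct}}(\cp[>0])\cap(\cp[<0])^{\perp_{\ct}}$ using presilting vanishing plus surjectivity of the approximation, and the quasi-inverse $\langle-1\rangle$. (In the abstract setting of \cite{IYY2008} the hypotheses are that $\cz$ is extension-closed and $(\cz,\cz)$ is a $\cp$-mutation pair; your computations are exactly the verification of these hypotheses in the present situation, which is how \cite{IY2018} reduces to the cited theorem.)

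There are, however, two genuine gaps. First, your (TR1) is not workable as stated: taking the cone $C$ of $f\colon X\to Y$ in $\ct$ and then ``replacing $C$ by an object of $\cz$ through a suitable truncation argument against $\thick(\cp)$'' is unjustified --- a co-t-structure truncation of $C$ does not in any evident way produce a triangle $X\to Y\to Z'\to X[1]$ with the \emph{same} map $f$ and third term in $\cz$, and making such a comparison precise is essentially the content of the separate statement \cite[Theorem 3.6]{IY2018}, not of the theorem you are proving. The standard construction avoids this entirely: cone off the morphism $(l_{X},f)\colon X\to P_{X}\oplus Y$; the very same $\Hom$-computations you used for $X\langle1\rangle$ (now using $\Hom_{\ct}(Y,\cp[>0])=0$ and that any map $X\to\cp$ factors through $l_{X}$) show this cone lies in $\cz$, and its image in $\cz/[\cp]$ is a standard triangle through $\overline{f}$. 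Second, (TR2) and especially (TR4) are where essentially all of the work in \cite{IYY2008} is concentrated, and you only assert them (``diagram chase'', ``delicate bookkeeping''); since the theorem \emph{is} the verification of these axioms, leaving them at that level makes the proposal an outline of the known proof rather than a proof.
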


	\begin{Thm}\cite[Theorem 3.6]{IY2018}\label{Thm:silting reduction} 
		The functor $ \overline{\rho}\colon \mathcal{Z}/[\mathcal{P}]\longrightarrow\mathcal{U} $ in Lemma~\ref{silting reduction} is a triangle functor where the triangulated structure of $ \mathcal{Z}/[\mathcal{P}] $ is given by Theorem~\ref{Tiangule structure for Z/P}.
	\end{Thm}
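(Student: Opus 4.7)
The plan is to upgrade the additive equivalence $\overline{\rho}$ of Theorem~\ref{silting reduction} to a triangle equivalence by endowing it with a natural isomorphism $\eta\colon \overline{\rho}\langle 1\rangle \iso [1]\overline{\rho}$ and checking that the distinguished triangles in $\mathcal{Z}/[\mathcal{P}]$, as defined in Theorem~\ref{Tiangule structure for Z/P}, are sent to distinguished triangles of $\mathcal{U}$. Once this is verified, $\overline{\rho}$ is a triangle functor; being already an equivalence of additive categories, its quasi-inverse is then automatically triangulated, and $\overline{\rho}$ is a triangle equivalence.

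For the first step, fix $X\in\mathcal{Z}$ together with its chosen left $\mathcal{P}$-approximation triangle $X\xrightarrow{l_X}P_X\to X\langle 1\rangle \xrightarrow{\varepsilon_X} X[1]$ in $\mathcal{T}$. Since $P_X\in\mathcal{P}\subseteq\mathcal{S}$ becomes zero in $\mathcal{U}$, the image of $\varepsilon_X$ under $\rho$ is an isomorphism, which I take as $\eta_X\colon \overline{\rho}(X\langle 1\rangle)\iso \overline{\rho}(X)[1]$. For naturality, any morphism $\overline{f}\in\Hom_{\mathcal{Z}/[\mathcal{P}]}(X,Y)$ lifts to some $f\colon X\to Y$ in $\mathcal{Z}$, and the defining property of left $\mathcal{P}$-approximations extends $f$ to a morphism between the two approximation triangles; applying $\rho$ produces the commutative square linking $\eta_X$ and $\eta_Y$. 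Independence of $\eta_X$ modulo $[\mathcal{P}]$ from the choice of $l_X$ follows from the presilting hypothesis $\Hom_{\mathcal{T}}(\mathcal{P},\mathcal{P}[>0])=0$, which forces any two extensions to differ by a map factoring through $\mathcal{P}$.

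For the second step, let a distinguished triangle in $\mathcal{Z}/[\mathcal{P}]$ be given. By Theorem~\ref{Tiangule structure for Z/P}(b), it is isomorphic to a complex $X\xrightarrow{\overline{f}}Y\xrightarrow{\overline{g}}Z\xrightarrow{\overline{a}}X\langle 1\rangle$ arising from a genuine triangle $X\xrightarrow{f}Y\xrightarrow{g}Z\xrightarrow{h}X[1]$ in $\mathcal{T}$ with $X,Y,Z\in\mathcal{Z}$, where $a$ is chosen so that $\varepsilon_X\circ a = h$. Applying $\rho$ gives a distinguished triangle $\overline{\rho}(X)\to\overline{\rho}(Y)\to\overline{\rho}(Z)\xrightarrow{\rho(h)}\overline{\rho}(X)[1]$ in $\mathcal{U}$, and the identity $\rho(h) = \eta_X\circ\overline{\rho}(\overline{a})$ shows that this coincides with the image complex after post-composition with $\eta_X$. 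Hence $(\overline{\rho},\eta)$ preserves distinguished triangles.

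The main obstacle I anticipate is the careful bookkeeping in the first step: showing that $\eta$ is well-defined, natural, and independent of the chosen left approximations, all modulo the ideal $[\mathcal{P}]$. This rests on the presilting property together with the existence of left $\mathcal{P}$-approximations guaranteed by condition (P1) in Condition~\ref{condition_p}. Once this is in place, the preservation of distinguished triangles is essentially forced by the very construction of the triangulated structure in Theorem~\ref{Tiangule structure for Z/P}.
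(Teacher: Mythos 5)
Your proposal is correct and follows essentially the same route as the original argument for this result in Iyama--Yang (the paper itself only cites \cite[Theorem 3.6]{IY2018} and gives no proof): one sets $\eta_X=\rho(\varepsilon_X)$, invertible because $P_X\in\mathcal{P}$ vanishes in $\mathcal{U}$, checks naturality via morphisms of the fixed approximation triangles, and notes that a standard triangle maps to the Verdier-quotient triangle of $X\to Y\to Z\xrightarrow{h}X[1]$ since $\rho(h)=\eta_X\circ\rho(a)$, after which the additive equivalence of Theorem~\ref{silting reduction} upgrades automatically to a triangle equivalence. No gaps beyond routine bookkeeping.
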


	\section{Relative AGK's constructions and Higgs constructions}
	
	Throughout this section, let $k$ be a field and let $D = \operatorname{Hom}_k(-, k)$ denote the $k$-dual. Let $d \geqslant 1$ be an integer. Let $\mathcal{T}$ be a $k$-linear triangulated category and $\mathcal{T}^{\fd}$ a triangulated subcategory of $\mathcal{T}$.

	Let $\mathcal{M}$ be a silting subcategory of $\mathcal{T}$ and $\cp$ a subcategory of $\cm$. Then it is clear that $\cp$ is a pre-silting subcategory of $\ct$. Denote by $$\pi\colon\ct\ra\cu\coloneqq\ct/\thick(\cp)$$ the canonical projection functor. 
	By \cite[Theorem 3.7]{IY2018}, the image $\pi(\cm)$ is also a silting subcategory of $\cu$. By abuse of notation, we will write $\cm$ for $\pi(\cm)$.


	\begin{Def}\rm\label{Def:Calabi--Yau quadruple}
		We say that $(\ct,\ct^{\fd},\cm,\cp)$ is a \emph{$(d+1)$-Calabi--Yau quadruple} if the following conditions are satisfied
		\begin{itemize}
			\item[(CY1)] The category $\ct$ is Krull–Schmidt, $\cp$ is functorially finite in $\cm$ and $\cu=\ct/\thick(\cp)$ is Hom-finite.
			\item[(CY2)] $\thick(\cp)$ is left orthogonal to $\ct^{\mathrm{fd}}$, i.e. $\ct^{\fd}\subseteq\thick(\cp)^{\perp_{\mathcal{T}}}$.
			\item[(CY3)] The pair $(\ct,\ct^{\fd})$ is \emph{relative $(d+1)$-Calabi--Yau} in the sense that there exists a bifunctorial isomorphism
			\[
			D \operatorname{Hom}_{\mathcal{T}}(X, Y) \simeq \operatorname{Hom}_{\mathcal{T}}(Y, X[d+1])
			\]
			for any $X\in\ct^{\fd}$ and $Y\in\ct$.
			\item[(CY4)] $\cu$ admits a $t$-structure $(\cu^{\leqslant0},\cu^{\geqslant0})\coloneqq(\cm[<\!0]^{\perp_{\mathcal{U}}},\cm[>\!0]^{\perp_{\mathcal{U}}})$ with $\cu^{\geqslant0}\subseteq\pi(\ct^{\fd})$.
			\item[(CY5)] $\ct$ admits a $t$-structure $$(\ct^{\leqslant0},\ct^{\geqslant0})$$ 
			with $\ct^{\leqslant0}=\pi^{-1}(\cu^{\leqslant0})$, $\ct^{\geqslant0}=\cm[>\!0]^{\perp_{\mathcal{T}}}\cap\thick(\cp)^{\perp_{\mathcal{T}}}\cap\,^{\perp_{\mathcal{T}}}\thick(\cp)$ and $\ct^{\geqslant0}\subseteq\ct^{\fd}$. Moreover, $\cm$ is a dualizing $k$-variety.
		\end{itemize}
	\end{Def}

We illustrate the t-structure on $\ct$ in the following picture
\begin{align*}
	\begin{tikzpicture}[scale=1.6]
		\draw [thick,red] (0,1.8)--(5,1.8);
		\draw [thick,blue] (0,0.9)--(5,0.9);
		\draw [thick,blue] (0,0)--(5,0);
		\draw[thick,red](2.5,1.8)--(2.5,0.9);
		\draw[decorate,decoration={brace,amplitude=10pt},xshift=-25pt,yshift=0pt](0,0)--(0,1.8)node [black,midway,xshift=-0.6cm] {\tiny $\ct$};
		\draw[decorate,blue,decoration={brace,amplitude=10pt},xshift=-2pt,yshift=0pt](0,0)--(0,0.9)node [black,midway,xshift=-0.9cm] {\tiny $\thick(\cp)$};
		\draw[decorate,blue,decoration={brace,amplitude=10pt},xshift=-2pt,yshift=0pt](0,0.9)--(0,1.8)node [black,midway,xshift=-0.9cm] {\tiny $\cu$};
		\draw[pattern=north east lines, pattern color=blue] (0,0) rectangle (5,0.9);
		\draw[pattern=north east lines, pattern color=blue] (0,0.9) rectangle (2.5,1.8);
		\draw[pattern=north west lines, pattern color=red] (2.5,0.9) rectangle (5,1.8);
	\end{tikzpicture}\,,
\end{align*} 
where the blue region represents the subcategory $ \ct^{\leqslant0} $ and the red region represents the subcategory $ \ct^{\geqslant0} $.

\begin{Rem}\label{Rem:silting reduction of CY}\label{Rem:4to3}
	\begin{itemize}
		\item[(1)] If $\cp$ is the zero category, then $(\ct,\ct^{\fd},\cm,0)$ is a $(d+1)$-Calabi--Yau quadruple if and only if $(\ct,\ct^{\fd},\cm)$ is a $(d+1)$-Calabi--Yau triple in the sense of Iyama--Yang \cite[Section 5]{IY2018}.
		\item[(2)] By the relative $(d + 1)$-Calabi--Yau property $\mathrm{(CY3)}$, it implies that $\thick(\cp)$ is also right orthogonal to $\ct^{\mathrm{fd}}$. Hence we have 
		$$\Hom_{\cu}(X,Y)\simeq\Hom_{\ct}(X,Y)$$ and
		$$\Hom_{\cu}(Y,X)\simeq\Hom_{\ct}(Y,X)$$ for any $X\in\ct^{\fd}$ and $Y\in\ct$. This implies that $(\cu,\pi(\ct^{\fd}),\pi(\cm))$ is a $(d+1)$-Calabi--Yau triple. Hence it implies that $\ct^{\fd}\subseteq^{\perp_{\ct}}\!\!(\cp[>\!0])\cap(\cp[<\!0])^{\perp_{\ct}}$.
	\end{itemize}
	
\end{Rem}

\begin{Ex}
	Let $f\colon B\ra A$ be a morphism (not necessarily unital) between two smooth and connective dg $k$-algebras. Let $n$ be a positive integer. Suppose that $f$ has a relative left $n$-Calabi--Yau structure in the sense of Brav--Dyckerhoff \cite{BD2019}. A dg $A$-module is perfectly valued if its
	total cohomology is finite-dimensional and we denote by $\pvd(A)$ the triangulated category
	of perfectly valued dg $A$-modules. 
	
	Let $e=f(\id_{B})$. Assume that $H^{0}(A)$ is Noetherian and $H^{0}(A)/\langle e\rangle$ is finite dimensional. Let $\pvd_{e}(A)$ be the full subcategory of $\pvd(A)$ of the dg $A$-modules whose restriction to $e$ is acyclic. By \cite[Section 4]{Wu2023}, $(\per A,\pvd_{e}(A),\add(A),\add(eA))$ is an $n$-Calabi--Yau quadruple.
\end{Ex}

\begin{Ex}\label{Ex:inter-CY}
Let $A$ be a Noetherian $k$-algebra, $e$ an idempotent of $A$, and $d$ a non-negative
integer. Assume that $A$ is internally $d$-Calabi–Yau with respect to $e$ in the sense of Pressland \cite[Definition 2.1]{Pressland2017} and $A/\langle e\rangle$ is finite dimensional. Then $(\per A,\pvd_{e}(A),\add(A),\add(eA))$ is a $d$-Calabi--Yau quadruple.
	
\end{Ex}	

\begin{Ex}
	Let $A$ be a smooth and connective dg algebra over $k$. Suppose that $A$ is $n$-Calabi--Yau for some positive integer $n$ and $H^{0}(A)$ is Noetherian. Let $e$ be an idempotent of $A$. Assume that $H^{0}(A)/\langle e\rangle$ is finite dimensional. Then $$(\per A,\pvd_{e}(A),\add(A),\add(eA))$$ is also an $n$-Calabi--Yau quadruple. If $e$ is the zero idempotent, then $\pvd_{e}(A)=\pvd(A)$, $\add(eA)=0$ and
	$(\per A,\pvd(A),\add(A))$ is an $n$-Calabi--Yau triple in the sense of Iyama--Yang \cite{IY2018}.
\end{Ex}	
\begin{Ex}\label{Ex:isolated sing}
Let $k$ be a field and $G$ a finite subgroup of $\mathrm{GL}_{n}(k)$.  Assume that the order of $G$ is not divisible by the characteristic of $k$. Denote by $R_{n}$ the formal power series algebra $k[[x_{1},\ldots,x_{n}]]$. Then $R_{n}$ is $n$-Calabi--Yau and $G$ naturally acts on $R$. We remark that $R_{n}$ is the derived $n$-Calabi--Yau completion of $R_{n-1}$, c.f. \cite{BK2011}, \cite[Section 6]{TV2010}.

Denote by $R*G$ the skew group algebra. By \cite[Proposition 3.3.2]{LeMeur2020}, we see that $R*G$ is smooth and $n$-Calabi--Yau. Let $e_{0}=\frac{1}{|G|}\sum_{g\in G}g$. The invariant algebra $R^{G}$ isomorphic to $e_{0}(R*G)e_{0}$ \cite[Lemma 4.6]{Liu2404}. Assume that $R^{G}$ is an isolated singularity. By the Theorem in \cite[pp.200]{Auslander1984}, the stable algebra $R*G/(e_{0})$ is finite dimensional. Hence $$(\per(R*G),\pvd_{e_{0}}(R*G),\add(R*G),\add(e_{0}(R*G)))$$ is an $n$-Calabi--Yau quadruple. 

\end{Ex}

\bigskip

Let $(\ct,\ct^{\fd},\cm,\cp)$ be a $(d+1)$-Calabi--Yau quadruple.
	
	\begin{Lem}\label{Lemma:bounded t-structure on Tfd}
		The pair $(\ct^{\fd}\cap\ct^{\leqslant0},\ct^{\geqslant0})$ is a bounded $t$-structure on $\ct^{\fd}$.
	\end{Lem}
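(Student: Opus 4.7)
The plan is to split the proof into two parts: verifying the axioms for $(\ct^{\fd}\cap\ct^{\leqslant 0},\ct^{\geqslant 0})$ to be a $t$-structure on $\ct^{\fd}$, and then establishing boundedness. For the axiomatic part, closure of each aisle under the appropriate shift is inherited from the ambient $t$-structure on $\ct$, using only that $\ct^{\fd}$ is triangulated, and the orthogonality $\Hom_{\ct^{\fd}}(\ct^{\fd}\cap\ct^{\leqslant 0},\ct^{\geqslant 1})=0$ is a restriction of $\Hom_\ct(\ct^{\leqslant 0},\ct^{\geqslant 1})=0$. The only axiom needing a genuine argument is the existence of truncation triangles: given $X\in\ct^{\fd}$, I would take the ambient truncation triangle $\tau^{\leqslant 0}X\to X\to \tau^{\geqslant 1}X\to(\tau^{\leqslant 0}X)[1]$ in $\ct$, observe that (CY5) gives $\tau^{\geqslant 1}X\in\ct^{\geqslant 1}\subseteq\ct^{\geqslant 0}\subseteq\ct^{\fd}$, and conclude that $\tau^{\leqslant 0}X\in\ct^{\fd}$ since $\ct^{\fd}$ is triangulated.

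For boundedness I would first rephrase aisle-membership as Hom-vanishing against $\cm$. Combining the descriptions of $\ct^{\leqslant 0}$ and $\ct^{\geqslant 0}$ given by (CY4)--(CY5) with Remark~\ref{Rem:4to3}(2), for $X\in\ct^{\fd}$ one obtains
$$X\in\ct^{\leqslant N}\iff \Hom_\ct(\cm,X[j])=0\ \text{for all}\ j>N,$$
$$X\in\ct^{\geqslant M}\iff \Hom_\ct(\cm,X[j])=0\ \text{for all}\ j<M,$$
where for the second equivalence the $\thick(\cp)$-orthogonality conditions in the definition of $\ct^{\geqslant 0}$ are automatic thanks to $X\in\ct^{\fd}$ and Remark~\ref{Rem:4to3}(2). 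It therefore suffices to find, for each $X\in\ct^{\fd}$, integers $M\leqslant N$ with $\Hom_\ct(\cm,X[j])=0$ outside $[M,N]$.

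To produce such bounds I would exploit that $\cm$ is silting in $\ct$, so $\ct=\thick(\cm)$, and hence every $X\in\ct^{\fd}\subseteq\ct$ is a direct summand of some $Y\in\cm[a_1]*\cm[a_2]*\cdots*\cm[a_r]$ with integers $a_1,\ldots,a_r$. For $j>-\min_i a_i$ every $\Hom_\ct(\cm,\cm[a_i+j])$ vanishes by the silting property, so the long exact sequences induced by the filtration of $Y$ give $\Hom_\ct(\cm,Y[j])=0$, whence the upper bound $X\in\ct^{\leqslant -\min_i a_i}$. The silting property alone is not enough for the lower bound, so I would invoke the relative $(d+1)$-Calabi--Yau duality (CY3) to rewrite
$$\Hom_\ct(\cm,X[j])\simeq D\Hom_\ct(X,\cm[d+1-j]),$$
and apply the same silting argument dually to $\Hom_\ct(-,\cm[n])$ with $n=d+1-j>\max_i a_i$ to force the right-hand side to vanish, yielding $X\in\ct^{\geqslant d+1-\max_i a_i}$. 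I expect the main subtlety to be precisely the Hom-vanishing translation in the previous paragraph — packaging (CY3), Remark~\ref{Rem:4to3}(2), and the automatic $\thick(\cp)$-orthogonality of $\ct^{\fd}$ — after which the silting property of $\cm$ together with Serre duality controls both tails of $j\mapsto\Hom_\ct(\cm,X[j])$ uniformly.
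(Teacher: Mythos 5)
Your proposal is correct and follows essentially the same route as the paper: the truncation-triangle argument for the $t$-structure axioms is identical, and the boundedness argument is the paper's two-sided vanishing of $\Hom_{\ct}(\cm,X[j])$ (silting for $j\gg0$, the relative Calabi--Yau duality (CY3) plus silting for $j\ll0$), with the translation of aisle membership into $\cm$-Hom-vanishing and the eventual-vanishing coming from $\ct=\thick(\cm)$ (equivalently the bounded co-t-structure) spelled out more explicitly than in the paper.
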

	\begin{proof}
		For $X\in\ct^{\fd}$ , there is a triangle
		\[
		\sigma^{\leqslant 0} X \rightarrow X \rightarrow \sigma^{\geqslant 1} X \rightarrow (\sigma^{\leqslant 0} X)[1].
		\]
		Since both \( X \) and \( \sigma^{\geqslant 1} X \) belong to the triangulated subcategory \( \ct^{\fd} \), it follows that \( \sigma^{\leqslant 0} X \) belongs to \( \ct^{\fd} \) and hence to \( \ct^{\fd} \cap \ct^{\leqslant 0} \). This shows that \( (\ct^{\fd} \cap \ct^{\leqslant 0}, \ct^{\geqslant 0}) \) is a \( t \)-structure on \( \ct^{\fd} \).
		
		Let $X$ be any object of $\ct^{\fd}$. Then we have $\Hom_{\mathcal{T}}(\cm,X[i])=0$ and $\Hom_{\mathcal{U}}(\cm,\pi(X)[-i])\simeq\Hom_{\ct}(\cm,X[-i])\simeq D\Hom_{\ct}(X,\cm[i+d+1])=0$ for $i\gg0$. Namely, $X$ belongs to $\ct^{\fd}\cap\ct^{\leqslant i}\cap\ct^{\geqslant -i}$ for some positive integers $i$.
	\end{proof}

\bigskip
	
	We denote by $\heartsuit$ the heart of the $t$-structure $(\ct^{\leqslant0},\ct^{\geqslant0})$. Denote by $\sigma^{\leqslant i}$ and $\sigma^{\geqslant i+1}$ the truncation functors associated with the $t$-structures $(\ct^{\leqslant i},\ct^{\geqslant i})\coloneqq(\ct^{\leqslant0}[-i],\ct^{\geqslant0}[-i])$. Then for each $X\in\ct^{\leqslant0}$, there exists a triangle in $\ct$
	$$L[-1]\ra Y\ra X\ra L$$
	with $L=\sigma^{\geqslant0}X\in\heartsuit$ and $Y=\sigma^{\leqslant-1}X\in\ct^{\leqslant-1}$.
	
	Notice that the image $\pi(\cm)$ under the quotient functor $\pi\colon\ct\ra\cu=\ct/\thick(\cp)$ is also a silting subcategory of $\cu$.
	\begin{Prop}\label{Prop:triangle with heart and T}
		For each $X\in\pi^{-1}(\cu_{\geqslant0})$, there exists a triangle in $\ct$
		$$L[-d-1]\ra X\ra Y\ra L[-d]$$
	 with $L\in\heartsuit$ and $\pi(Y)\in\cu_{\geqslant1}$. 
	\end{Prop}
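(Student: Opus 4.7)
The plan is to first reduce to $X\in\ct^{\leqslant0}$, then construct the triangle via the $(d+1)$-Calabi--Yau duality combined with a representability argument on $\cm$, and finally verify $\pi(Y)\in\cu_{\geqslant1}$ by analyzing the projected triangle in $\cu$.

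I first observe that $\cu_{\geqslant0}\subseteq\cu^{\leqslant0}$: unwinding the definitions of the two aisles, this reduces to the presilting identity $\Hom_{\cu}(\cm,\cm[\ell])=0$ for $\ell>0$. Consequently, $X\in\pi^{-1}(\cu_{\geqslant0})\subseteq\pi^{-1}(\cu^{\leqslant0})=\ct^{\leqslant0}$, so the cohomology $H^{0}(X)\in\heartsuit$ is well-defined.

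Next, I would produce $L\in\heartsuit$ and a morphism $\phi\colon L[-d-1]\to X$ via a representability argument combining the dualizing $k$-variety hypothesis on $\cm$ (CY5) with the Calabi--Yau duality inherited by $\cu$ (Remark~\ref{Rem:silting reduction of CY}). Concretely, the functor $\cm\to k\text{-mod}$, $N\mapsto\Hom_{\cu}(\pi(X),N)$, should be co-represented, up to $k$-duality, by $\pi(L)$ for a unique $L\in\heartsuit$, yielding a bifunctorial isomorphism
\[
\Hom_{\cu}(\pi(X),N)\;\simeq\;D\Hom_{\cu}(N,\pi(L))\qquad\text{for all }N\in\cm.
\]
By (CY3), this extends to a distinguished element of $\Hom_{\ct}(L[-d-1],X)\simeq D\Hom_{\ct}(X,L)$, yielding $\phi$. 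Setting $Y:=\cone(\phi)$ furnishes the desired triangle $L[-d-1]\to X\to Y\to L[-d]$ with $L\in\heartsuit$.

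The main obstacle is verifying $\pi(Y)\in\cu_{\geqslant1}$. My approach is to apply $\Hom_{\cu}(-,\cm[j])$ for $j\geqslant0$ to the projected triangle $\pi(L)[-d-1]\to\pi(X)\to\pi(Y)\to\pi(L)[-d]$ and combine: (a) the co-$t$-structure vanishing $\Hom_{\cu}(\pi(X),\cm[j])=0$ for $j\geqslant1$, valid because $\cm[j]\in\cu_{\leqslant-1}$ and $\pi(X)\in\cu_{\geqslant0}$; (b) the Calabi--Yau computation $\Hom_{\cu}(\pi(L)[-d-k],\cm[j])\simeq D\Hom_{\cu}(\cm,\pi(L)[k-j])$, which by $\pi(L)\in\cu^{\leqslant0}\cap\cu^{\geqslant0}$ vanishes whenever $k\neq j$; and (c) the characterization of $L$, which ensures that the connecting homomorphism $\Hom_{\cu}(\pi(X),\cm)\to D\Hom_{\cu}(\cm,\pi(L))$ induced by $\phi$ is in fact an isomorphism, simultaneously killing the $j=0$ kernel and the $j=1$ cokernel. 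The technical heart is confirming that this connecting map is indeed the duality-induced isomorphism; a cleaner alternative is to induct on the co-$t$-structure length of $\pi(X)$, handling the base case $\pi(X)\simeq M\in\cm$ by the argument above and gluing with the co-$t$-structure approximation triangle $\pi(X_{1})\to\pi(X)\to M$ via the octahedral axiom.
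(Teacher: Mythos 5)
Your overall strategy is the right one, and your final verification scheme (apply $\Hom_{\cu}(-,\cm[j])$ to the projected triangle, use the co-$t$-structure vanishing and the Calabi--Yau vanishing, and reduce everything to the single map $\phi^{*}\colon\Hom_{\cu}(\pi(X),\cm)\to D\Hom_{\cu}(\cm,\pi(L))$ being an isomorphism) is essentially what happens inside the argument the paper relies on. The difference is that the paper does not reprove this core: it observes that $(\cu,\pi(\ct^{\fd}),\pi(\cm))$ is a $(d+1)$-Calabi--Yau triple (Remark~\ref{Rem:4to3}) and quotes \cite[Proposition 4.12, Lemma 4.13]{IY2018} to get $L'$ in the heart of $\cu$ and a morphism $g'\colon L'[-d-1]\to\pi(X)$ inducing an isomorphism $\Hom_{\cu}(\pi(X),U)\simeq\Hom_{\cu}(L'[-d-1],U)$ for all $U\in\cu^{\leqslant0}$, then lifts $L'$ and $g'$ to $\ct$ via $\heartsuit\subseteq{}^{\perp_{\ct}}(\cp[>0])\cap(\cp[<0])^{\perp_{\ct}}$. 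In your version this core is a genuine gap, in two places. First, the representability step: before $\pi(L)$ exists you must show that the $\cm$-module $N\mapsto\Hom_{\cu}(\pi(X),N)$ is finitely presented (this uses a co-$t$-structure presentation of $\pi(X)\in\cu_{\geqslant0}$ by objects of $\cm$; the dualizing-variety hypothesis alone does not give it). Second, and more seriously, a natural isomorphism $\Hom_{\cu}(\pi(X),-)|_{\cm}\simeq\Hom_{\cu}(\pi(L)[-d-1],-)|_{\cm}$ is not induced by a morphism via Yoneda, since neither object lies in $\cm$; your phrase ``extends to a distinguished element of $D\Hom_{\ct}(X,L)$'' does not identify any element, and producing $\phi$ with exactly this property is the content of Iyama--Yang's Lemma 4.13, which you would have to reprove. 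Your fallback induction does handle the base case $\pi(X)=M\in\cm$ (there covariant Yoneda applies), but the gluing is not routine: lifting $L_{0}[-d-1]\to M$ along $\pi(X)\to M$ in the triangle $Z\to\pi(X)\to M\to Z[1]$ meets an obstruction in $\Hom_{\cu}(L_{0}[-d-1],Z[1])\simeq D\Hom_{\cu}(Z,L_{0}[-1])$, and for $Z\in\cu_{\geqslant1}$ this group does not vanish for degree reasons (e.g.\ $Z=M'[-1]$), so the octahedral axiom alone will not close the induction.

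Two further points. Your opening claim $\cu_{\geqslant0}\subseteq\cu^{\leqslant0}$ is false: for $0\neq M\in\cm$ the object $M[-1]$ lies in $\cu_{\geqslant0}$, yet $\Hom_{\cu}(\cm,M[-1][1])=\Hom_{\cu}(\cm,M)\neq0$, so $M[-1]\notin\cm[<0]^{\perp_{\cu}}=\cu^{\leqslant0}$. Since nothing later uses $X\in\ct^{\leqslant0}$ or $H^{0}(X)$, the remark can simply be deleted, but as stated it is wrong. Finally, the proposition asks for a triangle in $\ct$, so once $\pi(L)$ and the morphism are produced in $\cu$ you must record the lifting step: $\pi$ restricts to an equivalence $\heartsuit\simeq\heartsuit'$, and $\Hom_{\ct}(L[-d-1],X)\cong\Hom_{\cu}(\pi(L)[-d-1],\pi(X))$ because $\heartsuit\subseteq\ct^{\fd}$ is left and right orthogonal to $\thick(\cp)$; this is explicit in the paper's proof and only implicit in yours. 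There is also a small index slip in your item (b) (the correct identification is $\Hom_{\cu}(\pi(L)[-d-k],\cm[j])\simeq D\Hom_{\cu}(\cm,\pi(L)[1-j-k])$), though the qualitative conclusion you draw from it is the intended one.
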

	
	\begin{proof}
	Let $X\in\pi^{-1}(\cu_{\geqslant0})$. Then $\pi(X)$ lies in $\cu_{\geqslant0}$. Let $\heartsuit'$ be the heart of $(\cu^{\leqslant
	0},\cu^{\geqslant0})$. It is not hard to see that the quotient functor $\pi\colon\ct\ra\cu$ induces an equivalence $\pi\colon\heartsuit\iso\heartsuit'$ of abelian categories.
	
	By \cite[Proposition 4.12,Lemma 4.13]{IY2018}, there exists an object $L'\in\heartsuit'$ such that there exists a morphism $g'\colon L'[-d-1]\ra\pi(X)$ in $\cu$ which induces a functorial isomorphism
	$$g_{'*}\colon\Hom_{\cu}(\pi(X),U)\iso\Hom_{\cu}(L'[-d-1],U)$$ for $U\in\cu^{\leqslant0}$. Let $L$ be an object of $\heartsuit$ such that $\pi(L)=L'$. 
	
	Since $\heartsuit\subseteq^{\perp_{\ct}}\!\!(\cp[>\!0])\cap(\cp[<\!0])^{\perp_{\ct}}$, we have $\Hom_{\ct}(L[-d-1],X)\cong\Hom_{\cu}(L'[-d-1],\pi(X))$. We lift $g'$ to be a morphism $g\colon L[-d-1]\ra X$ in $\ct$.
	
	We extend the morphism $g$ to a triangle in $\ct$
	$$Y[-1]\ra L[-d-1]\xrightarrow{g} X\ra Y.$$
	
	Then it is enough to show that $\pi(Y)$ lies in $\cu_{\geqslant1}$. By the proof of \cite[Proposition 4.12]{IY2018}, we see that $$\Hom_{\cu}(\pi(Y),\pi(\cm)[\geqslant0])=0.$$ This shows that $\pi(Y)$ lies in $\cu_{\geqslant1}$. 
	
	\end{proof}

	\subsection{The silting reduction of a Calabi--Yau quadruple}
	
	Let $(\ct,\ct^{\fd},\cm,\cp)$ be a $(d+1)$-Calabi--Yau quadruple. Let $\co$ be a functorially finite subcategory of $\cm$.
	
	
	Then $\co$ is a presilting subcategory of $\ct$ satisfying the conditions $(P1)$ and $(P2)$ in Section\ref{Section:Reductions}. Let $$\quad\cv\coloneqq\ct/\thick(\co).$$
	
	Denote by $\beta\colon\ct\ra\cv$ the canonical projection functor. By the relative $(d+1)$-Calabi--Yau property $\mathrm{(CY3)}$, we have $$\ct^{\fd}\cap\thick(\co)^{\perp_{\ct}}=\ct^{\fd}\cap\,^{\perp_{\ct}}\thick(\co),$$
	which will be denoted by $\cv^{\fd}$. This category can be viewed as a full subcategory of $\cv$. 
	
	By abuse of notation, we will write $\cp$ for $\beta(\cp)$. By \cite[Theorem 3.7]{IY2018}, $\frac{\cm}{[\co]}\subseteq\frac{\cz''}{[\co]}\simeq\cv$ is a also silting subcategory of $\cv$. Hence $\cp$ is a presilting subcategory of $\cv$.

	Denote by 
	$$\delta\colon\cv\ra\cv/\thick(\cp)$$ the canonical projection functor. We have the following commutative diagram of triangulated categories
	\[
	\begin{tikzcd}
		&\cu=\ct/\thick(\cp)\arrow[dr,"\alpha"]&\\
		\ct\arrow[ur,"\pi"]\arrow[dr,"\beta",swap]&&\cu/\thick(\co)\simeq\ct/\thick(\co\cup\cp)\simeq\cv/\thick(\cp)\\
		&\cv=\ct/\thick(\co)\arrow[ur,"\delta",swap].&
	\end{tikzcd}
	\]

	Let $$\cz\coloneqq^{\perp_{\ct}}\!\!(\cp[>\!0])\cap(\cp[<\!0])^{\perp_{\ct}}.$$
	Then $\cz\hookrightarrow\ct\ra\ct/\thick(\cp)=\cu$ induces the following triangle equivalence (\cite[Theorems 3.1 and 3.6]{IY2018})
	$$\frac{\cz}{[\cp]}\iso\ct/\thick(\cp)=\cu.$$		
	Notice that we have a triangle equivalence $\frac{\cv}{\thick(\cp)}=\frac{\ct/\thick(\co)}{\thick(\cp)}\iso\frac{\ct}{\thick(\co\cup\cp)}$. Similarly, we have the following triangle equivalences
	$$\frac{\cz'}{[(\co\cup\cp)]}\iso\ct/\thick(\co,\cp)\simeq\cv/\thick(\cp)\simeq\cu/\thick(\co)$$
	and
	$$\frac{\cz''}{[\co]}\iso\ct/\thick(\co)=\cv$$ where $\cz'\coloneqq^{\perp_{\ct}}\!\!((\co\cup\cp)[>\!0])\cap((\co\cup\cp)[<\!0])^{\perp_{\ct}}$ and $\cz''\coloneqq^{\perp_{\ct}}\!\!(\co[>\!0])\cap(\co[<\!0])^{\perp_{\ct}}.$

	\begin{Lem}\cite[Lemma 5.5]{IY2018}\label{Lemm:V=T}
		We have an equality $\cv^{\fd}=\ct^{\fd}\cap\cz''$ of subcategories of $\ct$. In particular, if $\co=\cp$, then we have $\cu^{\fd}=(\ct/\thick(\cp))^{\fd}=\ct^{\fd}$.
	\end{Lem}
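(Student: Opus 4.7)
The plan is to prove the two inclusions of the equality $\cv^{\fd}=\ct^{\fd}\cap\cz''$ separately, using the relative Calabi--Yau property $(\mathrm{CY3})$ at the key step, and then deduce the particular case from $(\mathrm{CY2})$.

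For the inclusion $\cv^{\fd}\subseteq\ct^{\fd}\cap\cz''$, I would simply unpack the definitions: if $X\in\cv^{\fd}=\ct^{\fd}\cap\thick(\co)^{\perp_{\ct}}=\ct^{\fd}\cap\,^{\perp_{\ct}}\thick(\co)$ (using the equality recorded just above the definition of $\cv^{\fd}$), then $\Hom_{\ct}(X,\co[j])=0$ and $\Hom_{\ct}(\co[j],X)=0$ for every $j\in\mathbb{Z}$. Restricting to $j>0$ in the first, and rewriting $\Hom_{\ct}(\co[j],X)=\Hom_{\ct}(\co,X[-j])$ with $-j>0$ in the second (after replacing $j$ by $-j$), we see that $X\in\,^{\perp_{\ct}}(\co[>\!0])\cap(\co[<\!0])^{\perp_{\ct}}=\cz''$.

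The reverse inclusion $\ct^{\fd}\cap\cz''\subseteq\cv^{\fd}$ is where $(\mathrm{CY3})$ is used. Given $X\in\ct^{\fd}\cap\cz''$, the defining vanishings of $\cz''$ give $\Hom_{\ct}(X,\co[j])=0$ for $j>0$ and $\Hom_{\ct}(\co,X[k])=0$ for $k>0$. To promote the first to all $j\in\mathbb{Z}$, I apply $(\mathrm{CY3})$ to obtain a bifunctorial isomorphism
\[
\Hom_{\ct}(X,\co[j])\simeq D\Hom_{\ct}(\co[j],X[d+1])=D\Hom_{\ct}(\co,X[d+1-j]).
\]
For any $j\leqslant 0$, we have $d+1-j\geqslant d+1>0$, so the right-hand side vanishes by the second orthogonality in $\cz''$. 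Combined with the already-known vanishing for $j>0$, this shows $\Hom_{\ct}(X,\co[j])=0$ for every $j\in\mathbb{Z}$. Since $\thick(\co)$ is the smallest thick subcategory containing $\co$ (hence generated by shifts of objects of $\co$ under triangles and direct summands), this suffices to conclude $X\in\thick(\co)^{\perp_{\ct}}$, i.e.\ $X\in\cv^{\fd}$.

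For the particular case $\co=\cp$, no further argument involving $\cz''$ is needed: by $(\mathrm{CY2})$ we already have $\ct^{\fd}\subseteq\thick(\cp)^{\perp_{\ct}}$, and therefore $\cu^{\fd}=\ct^{\fd}\cap\thick(\cp)^{\perp_{\ct}}=\ct^{\fd}$. (Alternatively, one can invoke Remark~\ref{Rem:4to3}(2) to see that $\ct^{\fd}\subseteq\cz$, so $\ct^{\fd}\cap\cz=\ct^{\fd}$, recovering the same conclusion from the first part.) There is no real obstacle here; the only mildly delicate point is the correct shift bookkeeping in the Calabi--Yau duality, which determines precisely why the positive-shift vanishing on one side forces the non-positive-shift vanishing on the other.
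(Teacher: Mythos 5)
Your proof is correct and follows essentially the same route as the paper: both use (CY3) to promote the one-sided vanishing coming from $\cz''$ to full orthogonality against $\thick(\co)$ — the paper via the co-t-structure decomposition $\thick(\co)=\thick(\co)_{>0}*\thick(\co)_{\leqslant 0}$, you via thickness of the perpendicular subcategory — and both settle the case $\co=\cp$ using (CY2)/Remark~\ref{Rem:4to3}. The only cosmetic point is that your displayed duality literally yields $X\in{}^{\perp_{\ct}}\thick(\co)$ rather than $X\in\thick(\co)^{\perp_{\ct}}$, which is harmless since on $\ct^{\fd}$ the two descriptions coincide by the equality you quote at the outset.
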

	
	\begin{proof}
		Let $X\in\ct^{\fd}$. Then $X\in\cz''$ is equivalent to (see Section \ref{Section:Reductions})
		$$\Hom_{\ct}(X,\thick(\co)_{<0})=0)\quad\text{and}\quad\Hom_{\ct}(\thick(\co)_{>0},X)=0.$$
		By the relative $(d + 1)$-Calabi--Yau property, this is equivalent to $\Hom_{\ct}(\thick(\co)_{<d+1},X)=0)$ and $\Hom_{\ct}(\thick(\co)_{>0},X)=0.$

		Since $\thick(\co) = \thick(\co)_{>0} * \thick(\co)_{\leqslant 0}$ 
		and $\thick(\co)_{\leqslant 0} \subseteq \thick(\co)_{< d+1}$, 
		it follows that the above condition is equivalent to 
		$\Hom_{\ct}(\thick(\co), X) = 0$, i.e. 
		$X \in \thick(\co)^{\perp_{\ct}}$. If $\co=\cp$, by Remark \ref{Rem:4to3}, we have $\cu^{\fd}=\ct^{\fd}\cap\cz=\ct^{\fd}$.

	\end{proof}
	
	\begin{Thm}\label{Thm:quadruple reduction}
		The quadruple $(\cv,\cv^{\fd},\cm,\cp)$ is a $(d+1)$-Calabi--Yau quadruple. Namely,
		\begin{itemize}
			\item[(1)] The category $\cv$ is Krull–Schmidt, $\cp$ is functorially finite in $\cm$ and $\cv/\thick(\cp)$ is Hom-finite.
			\item[(2)] $\thick(\cp)$ is left orthogonal to $\cv^{\mathrm{fd}}$.
			\item[(3)] The pair $(\cv,\cv^{\fd})$ is relative $(d+1)$-Calabi--Yau in the sense that there exists a bifunctorial isomorphism
			\[
			D \operatorname{Hom}_{\cv}(X, Y) \simeq \operatorname{Hom}_{\cv}(Y, X[d+1])
			\]
			for any $X\in\cv^{\fd}$ and $Y\in\cv$.
			\item[(4)] $\frac{\cv}{\thick(\cp)}$ admits a $t$-structure $(\frac{\cv}{\thick(\cp)}^{\leqslant0},\frac{\cv}{\thick(\cp)}^{\geqslant0})\coloneqq(\cm[<\!\!0]^{\perp},\cm[>\!\!0]^{\perp})$ with $\frac{\cv}{\thick(\cp)}^{\geqslant0}\subseteq\delta(\cv^{\fd})$.
			\item[(5)] $\cv$ admits a $t$-structure $$(\cv^{\leqslant0},\cv^{\geqslant0})$$ with $\cv^{\leqslant0}=\delta^{-1}(\frac{\cv}{\thick(\cp)}^{\leqslant0})$, $\cv^{\geqslant0}=\cm[>\!\!0]^{\perp_{\cv}}\cap\thick(\cp)^{\perp_{\cv}}\cap\,^{\perp_{\cv}}\thick(\cp)$ and $\cv^{\geqslant0}\subseteq\cv^{\fd}$. Moreover, $\beta(\cm)$ is a dualizing $k$-variety.
		\end{itemize}
	
	\end{Thm}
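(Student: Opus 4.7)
The plan is to verify the five axioms (1)--(5) in order, leveraging the original axioms for $(\ct,\ct^{\fd},\cm,\cp)$ together with the silting reduction machinery (Theorems~\ref{silting reduction} and \ref{Thm:silting reduction}) applied to the functorially finite presilting subcategory $\co\subseteq\cm$, and the identifications $\cz''/[\co]\iso\cv$ and $\cv/\thick(\cp)\simeq\cu/\thick(\co)$ already recorded in the text.

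For (1), I would combine Krull--Schmidtness of $\ct$ with the fact that idempotents split in the additive quotient $\cz''/[\co]\simeq\cv$; Hom-finiteness of $\cv/\thick(\cp)\simeq\cu/\thick(\co)$ then follows from Hom-finiteness of $\cu$ (by (CY1)) and silting reduction applied inside $\cu$, using that $\co\subseteq\cm$ is functorially finite there. Functorial finiteness of $\cp$ inside $\cm\subseteq\cv$ is inherited from its counterpart in $\ct$, since approximations in $\ct$ descend to $\cv$. For (2) and (3), the key observation is that $\cv^{\fd}=\ct^{\fd}\cap\thick(\co)^{\perp_{\ct}}=\ct^{\fd}\cap{}^{\perp_{\ct}}\thick(\co)$ (using the relative CY duality in $\ct$), so that for any $X\in\cv^{\fd}$ and any $Y\in\cv$ represented by a lift in $\ct$, we have natural isomorphisms $\Hom_{\cv}(X,Y)\simeq\Hom_{\ct}(X,Y)$ and $\Hom_{\cv}(Y,X)\simeq\Hom_{\ct}(Y,X)$. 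Axioms (2) and (3) then follow directly from (CY2) and (CY3) by pulling back through these isomorphisms.

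For (4), the $t$-structure $(\cm[<\!0]^{\perp_{\cu}},\cm[>\!0]^{\perp_{\cu}})$ on $\cu$ descends to one on $\cu/\thick(\co)\simeq\cv/\thick(\cp)$ because both aisles contain $\thick(\co)$ (as $\co\subseteq\cm$ implies $\thick(\co)$ is orthogonal in the appropriate ranges to $\cm[<\!0]$ and $\cm[>\!0]$). The inclusion $\frac{\cv}{\thick(\cp)}^{\geqslant 0}\subseteq\delta(\cv^{\fd})$ is inherited from the original inclusion $\cu^{\geqslant 0}\subseteq\pi(\ct^{\fd})$, using Lemma~\ref{Lemm:V=T}. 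For (5), I would construct the $t$-structure on $\cv$ by pulling back along $\delta$: set $\cv^{\leqslant 0}=\delta^{-1}(\frac{\cv}{\thick(\cp)}^{\leqslant 0})$ and define $\cv^{\geqslant 0}$ as prescribed. To produce the truncation triangle for $X\in\cv$, lift $X$ to $\ct$, apply the $t$-structure $(\ct^{\leqslant 0},\ct^{\geqslant 0})$, and project to $\cv$; Proposition~\ref{Prop:triangle with heart and T} controls the heart piece and guarantees that the resulting pieces live in the correct subcategories after projection. The containment $\cv^{\geqslant 0}\subseteq\cv^{\fd}$ follows from $\ct^{\geqslant 0}\subseteq\ct^{\fd}$ together with the orthogonality to $\thick(\co)$ built into the definition of $\cv^{\geqslant 0}$. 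The dualizing $k$-variety property of $\cm$ in $\cv$ is inherited from the corresponding property in $\ct$, since the Hom-spaces of $\cm$ in $\cv$ differ from those in $\ct$ only by the ideal $[\co]$ of a functorially finite subcategory, and such quotients preserve the dualizing property.

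The main obstacle I anticipate lies in axiom (5): one must simultaneously arrange that the truncation of $X\in\cv$ with respect to the new $t$-structure has its right piece not only lying in $\ct^{\geqslant 0}$ after lifting, but actually orthogonal to $\thick(\cp)$ on both sides \emph{and} in $\thick(\co)^{\perp_{\ct}}$, so that it defines an element of $\cv^{\fd}$. Careful bookkeeping with the two quotients $\pi$ and $\beta$, together with repeated use of the relative $(d+1)$-Calabi--Yau duality to swap left- and right-orthogonals, will be required to produce a canonical lift of the truncation triangle whose pieces land in the prescribed subcategories, rather than merely in the larger classes visible inside $\ct$.
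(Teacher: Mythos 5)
Your treatment of (1)--(3) is essentially the paper's argument (reduce everything to Hom-spaces computed in $\ct$ via $\cv^{\fd}=\ct^{\fd}\cap\thick(\co)^{\perp_{\ct}}=\ct^{\fd}\cap\,^{\perp_{\ct}}\thick(\co)$ and the equivalences $\cz''/[\co]\iso\cv$, $\cz'/[\co\cup\cp]\iso\cv/\thick(\cp)$), but there are genuine gaps in (4) and (5). For (4), your justification rests on the claim that ``both aisles contain $\thick(\co)$,'' which is false: a subcategory closed under all shifts that lies in $\cu^{\leqslant0}\cap\cu^{\geqslant0}=\heartsuit$ must be zero (if $X$ and $X[1]$ both lie in the heart then $X\in\cu^{\leqslant0}\cap\cu^{\geqslant1}=0$), and indeed $\co[-1]\not\subseteq\cm[<0]^{\perp_{\cu}}$ in general since $\Hom_{\cu}(\cm[-1],\co[-1])=\Hom_{\cu}(\cm,\co)\neq0$. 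Moreover $\thick(\co)$ is not stable under the truncation functors of $(\cu^{\leqslant0},\cu^{\geqslant0})$ (its truncations land in $\pi(\ct^{\fd})$, not in $\thick(\co)$), so the $t$-structure does not simply descend to the Verdier quotient, and the new co-aisle $\cm[>0]^{\perp}$ in $\cu/\thick(\co)$ is not the image of $\cu^{\geqslant0}$, so the inclusion into $\delta(\cv^{\fd})$ is not ``inherited'' either. The paper instead uses Remark \ref{Rem:4to3}(2), that $(\cu,\pi(\ct^{\fd}),\pi(\cm))$ is a $(d+1)$-Calabi--Yau triple, and applies Iyama--Yang's reduction theorem for Calabi--Yau triples with respect to $\co$, which produces the $t$-structure on $\cu/\thick(\co)\simeq\cv/\thick(\cp)$ together with $\cm[>0]^{\perp}\subseteq\pi(\ct^{\fd})\cap\thick(\co\cup\cp)^{\perp}\subseteq\delta(\cv^{\fd})$.

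For (5), you correctly identify the crux --- that after lifting and truncating one must show the left piece lands in $\delta^{-1}\bigl(\frac{\cv}{\thick(\cp)}^{\leqslant0}\bigr)$ and the right piece is bi-orthogonal to both $\thick(\cp)$ and $\thick(\co)$, hence lies in $\cv^{\fd}$ --- but you leave exactly this step to ``careful bookkeeping,'' and the tool you propose for it, Proposition \ref{Prop:triangle with heart and T}, is not the relevant one (in the paper it serves the other $t$-structure of Theorem \ref{Thm:left t-structure}). The paper's resolution is concrete: take the lift $X$ in $\cz''$, truncate with $(\ct^{\leqslant0},\ct^{\geqslant0})$; since $\ct^{\geqslant1}\subseteq\ct^{\fd}$ is bi-orthogonal to $\thick(\cp)$, this triangle is also the canonical truncation in $\cu$; transporting along $\cz/[\cp]\iso\cu$ and invoking the explicit description of truncations in the reduced Calabi--Yau triple (Iyama--Yang, Theorem 5.4(c)) shows that $\sigma^{\geqslant1}X\in\pi(\ct^{\fd})\cap\cz^{\cu}$ and that the same triangle computes the truncation in $\cu/\thick(\co)$, whence $\delta\beta(\sigma^{\leqslant0}X)$ lies in the new aisle; then Lemma \ref{Lemm:V=T} ($\cv^{\fd}=\ct^{\fd}\cap\cz''$) gives $\sigma^{\geqslant1}X\in\cv^{\fd}$, so by (2) it is bi-orthogonal to $\thick(\cp)$ in $\cv$, and $\Hom_{\cv}(\cm[>0],\sigma^{\geqslant1}X[1])=\Hom_{\ct}(\cm[>0],\sigma^{\geqslant1}X[1])=0$; finally the same truncation argument shows the co-aisle is contained in $\cv^{\fd}$. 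Without this passage through $\cz''$, the reduced Calabi--Yau triple, and Lemma \ref{Lemm:V=T}, your outline does not yet constitute a proof of (4) or (5); the dualizing-variety claim, by contrast, is exactly the cited result of Iyama--Yang and is fine.
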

	
	\begin{proof}
		
\begin{itemize}
		\item[(1)] The category $\cz''$ is a full subcategory of $\ct$ which is closed under direct summands. Thus it is Krull–Schmidt, so is the additive quotient $\cz''/[\co]$. Notice that for each $X,Y\in\cz'\subseteq\cz$ the space $\Hom_{\frac{\cz'}{[\co\cup\cp]}}(X,Y)$ is a quotient of $\Hom_{\frac{\cz}{[\cp]}}(X,Y)$ and $\cu\simeq\frac{\cz}{[\cp]}$ is Hom-finite. Hence $\cv/\thick(\cp)$ is Hom-finite. 
		\item[(2)] Since $\cv^{\fd}=\ct^{\fd}\cap\thick(\co)^{\perp_{\ct}}=\ct^{\fd}\cap\,^{\perp_{\ct}}\thick(\co)$, we have $$\Hom_{\ct/\thick(\co)}(P,V)=\Hom_{
		\ct}(P,V)=0$$
		for any $P\in\thick(\cp)$ and $V\in\cv^{\fd}\subseteq\ct^{\fd}$. This shows that $\thick(\cp)$ is left orthogonal to $\cv^{\mathrm{fd}}$.

		\item[(3)] Since  $\cv^{\fd}=\ct^{\fd}\cap\thick(\co)^{\perp_{\ct}}=\ct^{\fd}\cap\,^{\perp_{\ct}}\thick(\co)$, we have
		\begin{eqnarray*}
			\begin{split}
				D\Hom_{\cv}(X,Y)=D\Hom_{\ct}(X,Y)\simeq\Hom_{\ct}(Y,X[d+1])\\\simeq\Hom_{\cv}(Y,X[d+1])
			\end{split}
		\end{eqnarray*}
		for any $X\in\cv^{\fd}$ and $Y\in\cv$. 
		
		\item[(4)] By Remark \ref{Rem:4to3} (2), the triple $(\cu,\pi(\ct^{\fd}),\pi(\cm))$ is a $(d+1)$-Calabi--Yau triple. The additive category $\co\cup\cp$ is a presilting subcategory of $\cu$. Let $$(\cu/\thick(\co))^{\fd}\coloneqq\pi(\ct^{\fd})\cap\thick(\co\cup\cp)^{\perp}=\pi(\ct^{\fd})\cap\,^{\perp}\thick(\co\cup\cp).$$
		By \cite[Theorem 5.4]{IY2018}, the triple $(\cu/\thick(\co),(\cu/\thick(\co))^{\fd},\cm)$ is also a $(d + 1)$-Calabi--Yau triple. Hence $\cu/\thick(\co)$ admits a $t$-structure $(\cm[<\!\!0]^{\perp},\cm[>\!\!0]^{\perp})$ with $\cm[>\!\!0]^{\perp}\subseteq(\cu/\thick(\co))^{\fd}=\pi(\ct^{\fd})\cap\thick(\co\cup\cp)^{\perp}$. Notice that the inclusion $\pi(\ct^{\fd})\cap\thick(\co\cup\cp)^{\perp}\subseteq\delta(\cv^{\fd})$ is clear. Hence $(4)$ holds.
	
	\item[(5)] 
	By \cite[Proposition 5.7]{IY2018}, the additive quotient $\rho(\cm)=\cm/[\co]$ is a dualizing $k$-variety. Let $X$ be an object of $\cz''$. Since $\ct$ admits a $t$-structure $(\ct^{\leqslant0},\ct^{\geqslant0})$, there exists a triangle in $\ct$
	\begin{equation}\label{triangle in T}
		\sigma^{\leqslant 0} X \xrightarrow{} X \xrightarrow{} \sigma^{\geqslant 1} X \ra \sigma^{\leqslant 0} X[1]
	\end{equation}
	with $\sigma^{\leqslant 0} X\in\ct^{\leqslant0}=\pi^{-1}(\cu^{\leqslant0})$ and $\sigma^{\geqslant 1} X[1]\in\ct^{\geqslant0}=\cm[>\!0]^{\perp_{\mathcal{T}}}\cap\thick(\cp)^{\perp_{\mathcal{T}}}\cap\,^{\perp_{\mathcal{T}}}\thick(\cp)\subseteq\ct^{\fd}$.
	
	Since $\ct^{\fd}$ is both left and right orthogonal to $\thick(\cp)$, the triangle \eqref{triangle in T} can also be regarded as the canonical triangle of $X \in \cu$ with respect to the $t$-structure $(\cu^{\leqslant 0}, \cu^{\geqslant 0})$. 
	
	By the triangle equivalence $\frac{\cz}{[\cp]}\iso\cu$, there exits an object $X^{\cz}\in\cz$ such that $X\cong X^{\cz}$ in $\cu$. 
	
	Let $\cz^{\cu}\coloneqq^{\perp_{\cu}}\!\!(\co[>\!0])\cap(\co[<\!0])^{\perp_{\cu}}.$ Using the $t$-structure $(\cu^{\leqslant0},\cu^{\geqslant0})$ on $\cu$ and by \cite[Theorem 5.4 (c)]{IY2018}, we have the following triangle 
	\begin{equation}\label{triangle in U}
	\sigma^{\leqslant0}X^{\cz}\ra X^{\cz}\ra\sigma^{\geqslant1}X^{\cz}\ra\sigma^{\leqslant 0} X^{\cz}[1]
    \end{equation}
	with $\sigma^{\leqslant 0} X^{\cz}\in\cz^{\cu}\subseteq\cu^{\leqslant0}$ and $\sigma^{\geq 1}X^{\cz}\in\pi(\ct^{\fd})\cap\cz^{\cu}$. Since $X$ and $X^{\cz}$ are isomorphic in $\cu$, we have $\sigma^{\leqslant0}X\cong\sigma^{\leqslant 0} X^{\cz}$ and $\sigma^{\geqslant1}X\cong\sigma^{\geqslant 1}X^{\cz}$ in $\cu$.

	By \cite[Theorem 5.4 (c)]{IY2018}, the triangle \ref{triangle in U} is also the canonical triangle of $X \in \cu/\thick(\co)$ with respect to the $t$-structure $(\cm[<\!\!0]^{\perp},\cm[>\!\!0]^{\perp})$ on $\cu/\thick(\co)$. This shows that $\alpha(\sigma^{\leqslant 0} X^{\cz})$ belongs to $\frac{\cu}{\thick(\co)}^{\leqslant0}\simeq\frac{\cv}{\thick(\cp)}^{\leqslant0}$ and $\delta(\beta(\sigma^{\leqslant0}X))=\alpha(\pi(\sigma^{\leqslant0}X))\cong\alpha(\sigma^{\leqslant0}X^{Z})$ 
	lies in $\frac{\cu}{\thick(\co)}^{\leqslant0}\simeq\frac{\cv}{\thick(\cp)}^{\leqslant0}$. Hence $\sigma^{\leqslant0}X$ belongs to $\delta^{-1}(\frac{\cv}{\thick(\cp)}^{\leqslant0})$ when we view $\sigma^{\leqslant0}X$ as an object in $\cv=\ct/\thick(\co)$. 
	
	We next show that $\sigma^{\geqslant1}X[1]$ lies in $\cv^{\geqslant0}=\cm[>\!\!0]^{\perp_{\cv}}\cap\thick(\cp)^{\perp_{\cv}}\cap\,^{\perp_{\cv}}\thick(\cp)$. By the argument above, we see that $\sigma^{\geqslant1}X\cong\sigma^{\geqslant 1}X^{\cz}\in\pi(\ct^{\fd})\cap\cz^{\cu}$ in $\cu$. Hence $\sigma^{\geqslant1}X$ belongs to $\cz''\cap\ct^{\fd}=\cv^{\fd}$ (Lemma \ref{Lemm:V=T}) when we view it as an object of $\ct$. By $(2)$, we have $\sigma^{\geqslant1}X[1]\in\thick(\cp)^{\perp_{\cv}}\cap\,^{\perp_{\cv}}\thick(\cp)$.
	
	Since $\cv^{\fd}=\ct^{\fd}\cap\thick(\co)^{\perp_{\ct}}=\ct^{\fd}\cap\,^{\perp_{\ct}}\thick(\co)$, then $$\Hom_{\cv}(\cm[>0],\sigma^{\geqslant1}X[1])=\Hom_{\ct}(\cm[>0],\sigma^{\geqslant1}X[1])=0.$$ 
	Hence we have shown that $\sigma^{\geqslant1}X[1]$ lies in $\cv^{\geqslant0}=\cm[>\!\!0]^{\perp_{\cv}}\cap\thick(\cp)^{\perp_{\cv}}\cap\,^{\perp_{\cv}}\thick(\cp)$. Therefore $\cv=\delta^{-1}(\frac{\cv}{\thick(\cp)}^{\leqslant0})*(\cm[>\!\!0]^{\perp_{\cv}}\cap\thick(\cp)^{\perp_{\cv}}\cap\,^{\perp_{\cv}}\thick(\cp))$.
	
	Notice that 
	$$\Hom_{\cv}(A,B)=\Hom_{\frac{\cv}{\thick(\cp)}}(A,B)=0$$ for each $A\in\delta^{-1}(\frac{\cv}{\thick(\cp)}^{\leqslant0})$ and $B\in\cm[>\!\!0]^{\perp_{\cv}}\cap\thick(\cp)^{\perp_{\cv}}\cap\,^{\perp_{\cv}}\thick(\cp)$.
	
	Summarizing, it is shown that $(\delta^{-1}(\frac{\cv}{\thick(\cp)}^{\leqslant0}),\cm[>\!\!0]^{\perp_{\cv}}\cap\thick(\cp)^{\perp_{\cv}}\cap\,^{\perp_{\cv}}\thick(\cp))$ is a $t$-structure on $\cv$.

	Finally, let $Y\in\cm[>\!\!0]^{\perp_{\cv}}\cap\thick(\cp)^{\perp_{\cv}}\cap\,^{\perp_{\cv}}\thick(\cp)$. The triangle \ref{triangle in T} shows that $Y$ is isomorphic to $\sigma^{\geqslant1}Y$ in $\cv$ and hence belongs to $\cz''\cap\ct^{\fd}=\cv^{\fd}$, as argued above. This shows that $\cm[>\!\!0]^{\perp_{\cv}}\cap\thick(\cp)^{\perp_{\cv}}\cap\,^{\perp_{\cv}}\thick(\cp)\subseteq\cv^{\fd}$.
	
	\end{itemize}
		
	\end{proof}
	
\begin{Cor}\label{Cor:induced Calabi--Yau triple}
	The triple $(\cu,\cu^{\fd},\cm)$ is a $(d+1)$-Calabi--Yau triple.
\end{Cor}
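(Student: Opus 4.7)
The plan is to deduce this corollary as a direct specialization of Theorem~\ref{Thm:quadruple reduction} to the case $\co=\cp$. Note that $\cp$ is functorially finite in $\cm$ by hypothesis (CY1), so it is a legitimate choice for the role of $\co$ in the silting reduction setup of the preceding subsection.

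With $\co=\cp$, one simply reads off that $\cv=\ct/\thick(\co)=\ct/\thick(\cp)=\cu$ and that $\beta(\cp)=0$ inside $\cv=\cu$, since $\cp\subseteq\thick(\co)$ gets collapsed by the quotient functor. Moreover, by Lemma~\ref{Lemm:V=T} applied with $\co=\cp$, we have $\cv^{\fd}=\ct^{\fd}\cap\thick(\cp)^{\perp_{\ct}}=\ct^{\fd}$, the last equality being (CY2). Thus Theorem~\ref{Thm:quadruple reduction} delivers the statement that $(\cu,\ct^{\fd},\cm,0)$ is a $(d+1)$-Calabi--Yau quadruple.

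It remains to translate this quadruple into a triple. Remark~\ref{Rem:4to3}(1) asserts precisely that a $(d+1)$-Calabi--Yau quadruple whose final entry is zero is the same data as a $(d+1)$-Calabi--Yau triple in the sense of Iyama--Yang. Applied to the quadruple above, this yields that $(\cu,\cu^{\fd},\cm)$ is a $(d+1)$-Calabi--Yau triple, which is the claim.

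There is no real obstacle here: all the substantive work, namely verifying the Krull--Schmidt, Hom-finiteness, orthogonality, relative Calabi--Yau, and $t$-structure conditions, has been carried out once and for all in Theorem~\ref{Thm:quadruple reduction}. The only thing to double-check while writing the proof is the bookkeeping identification between the axioms (CY1)--(CY5) for the quadruple $(\cu,\cu^{\fd},\cm,0)$ and the axioms for a Calabi--Yau triple; in particular one should remark that conditions (CY4) and (CY5) collapse into a single $t$-structure condition on $\cu$ once the $\thick(\cp)$-piece has been quotiented out, which is exactly the content of Remark~\ref{Rem:4to3}(1).
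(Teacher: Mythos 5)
Your proposal is correct and follows essentially the same route as the paper, whose proof is precisely to apply Theorem~\ref{Thm:quadruple reduction} with $\co=\cp$; your extra bookkeeping via Lemma~\ref{Lemm:V=T} and Remark~\ref{Rem:4to3}(1) just makes explicit the identifications $\cv=\cu$, $\cv^{\fd}=\ct^{\fd}\cap\thick(\cp)^{\perp_\ct}=\cu^{\fd}$ and the passage from a quadruple with zero last entry to a triple, which the paper leaves implicit.
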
	
\begin{proof}
	We apply Theorem~\ref{Thm:quadruple reduction} by setting $\co=\cp$.
\end{proof}

\begin{Def}\rm
	The quadruple $(\cv,\cv^{\fd},\cm,\cp)$ is called the
	\emph{silting reduction of $(\ct,\ct^{\fd},\cm,\cp)$ with respect to $\co$}.
\end{Def}

\begin{Rem}
	When $\cp$ is the zero category, 
	the construction above coincides with the silting reduction 
	of a Calabi--Yau triple as defined in the work of Iyama--Yang \cite[Section 5.2]{IY2018}.  
\end{Rem}

By \cite[Theorem 4.9]{IY2018} and Corollary \ref{Cor:induced Calabi--Yau triple}, the quotient category $\cu$ admits an other $t$-stricture $(^{\perp_{\cu}}\cm[<0],^{\perp_{\cu}}\!\cm[>0])$ with $^{\perp_{\cu}}\cm[<0]\subseteq\cu^{\fd}=\pi(\ct^{\fd})$. 

Let $\ct^{'\leqslant0}=^{\perp_{\ct}}\!\!\cm[<0]\cap\thick(\cp)^{\perp_{\mathcal{T}}}\cap^{\perp_{\mathcal{T}}}\thick(\cp)$ and $\ct^{'\geqslant0}=\pi^{-1}(^{\perp_{\cu}}\!\cm[>0])$. It is clear that $\ct^{'\leqslant0}$ can be viewed as a subcategory of $^{\perp_{\cu}}\cm[<0]$. Hence $\ct^{'\leqslant0}$ is also a subcategory of $\ct^{\fd}$.
	
\begin{Thm}\label{Thm:left t-structure}
	The pair $(\ct^{'\leqslant0},\ct^{'\geqslant0})$ is a $t$-structure on $\ct$ with $\ct^{'\leqslant0}\subseteq\ct^{\fd}$.
\end{Thm}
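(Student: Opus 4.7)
The plan is to transfer the ``opposite'' $t$-structure on $\cu$ (which exists by \cite[Theorem 4.9]{IY2018} applied to the induced Calabi--Yau triple $(\cu,\cu^{\fd},\cm)$ from Corollary \ref{Cor:induced Calabi--Yau triple}) back to $\ct$. The central technical ingredient will be the following Hom-lifting property: for every $A\in{}^{\perp_{\ct}}\thick(\cp)$ and every $Y\in\ct$, the projection $\pi$ induces a natural isomorphism $\Hom_{\ct}(A,Y)\iso\Hom_{\cu}(\pi(A),\pi(Y))$. This is a standard consequence of the calculus of fractions for Verdier quotients: any roof $A\xleftarrow{s}A'\xrightarrow{f}Y$ with $\mathrm{Cone}(s)\in\thick(\cp)$ is equivalent to a genuine morphism out of $A$, because the vanishing of $\Hom_{\ct}(A,\thick(\cp)[\mathbb{Z}])$ forces $s$ to split, and because no nonzero morphism out of $A$ can factor through $\thick(\cp)$.

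First I would establish the containment $\ct^{'\leqslant0}\subseteq\ct^{\fd}$. For $X\in\ct^{'\leqslant0}$ one has $\pi(X)\in{}^{\perp_{\cu}}\cm[<0]\subseteq\cu^{\fd}$, so by Lemma \ref{Lemm:V=T} (applied with $\co=\cp$) there exists $Y\in\ct^{\fd}$ with $\pi(Y)\cong\pi(X)$. Both $X$ (by the definition of $\ct^{'\leqslant0}$) and $Y$ (by Remark \ref{Rem:4to3}(2)) lie in $\thick(\cp)^{\perp_{\ct}}\cap{}^{\perp_{\ct}}\thick(\cp)$, so the Hom-lifting property promotes $\pi(X)\cong\pi(Y)$ to an isomorphism $X\cong Y$ in $\ct$, whence $X\in\ct^{\fd}$.

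Next I would verify the $t$-structure axioms. Closure under shifts is by inspection: each of the three defining Hom-vanishings of $\ct^{'\leqslant0}$ is preserved under $[1]$, and $\ct^{'\geqslant0}=\pi^{-1}({}^{\perp_{\cu}}\cm[>0])$ is stable under $[-1]$ because its image under $\pi$ is the right aisle of the $t$-structure on $\cu$. For the Hom-vanishing $\Hom_{\ct}(\ct^{'\leqslant0},\ct^{'\geqslant1})=0$, the Hom-lifting property reduces the claim to the axiom $\Hom_{\cu}({}^{\perp_{\cu}}\cm[<0],{}^{\perp_{\cu}}\cm[\geqslant0])=0$ coming with the $t$-structure on $\cu$.

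The remaining, main step produces the decomposition triangle. Given $X\in\ct$, apply the $t$-structure on $\cu$ to $\pi(X)$ to obtain a triangle $A'\to\pi(X)\to B'\to A'[1]$ in $\cu$ with $A'\in{}^{\perp_{\cu}}\cm[<0]$ and $B'\in{}^{\perp_{\cu}}\cm[\geqslant0]$. Since $A'\in\cu^{\fd}$, pick a preimage $A\in\ct^{\fd}$ with $\pi(A)\cong A'$; the Hom-lifting property then lifts the morphism $A'\to\pi(X)$ to a morphism $\phi\colon A\to X$ in $\ct$, which we complete to a triangle $A\to X\to B\to A[1]$. Applying $\pi$ gives $\pi(B)\cong B'$, hence $B\in\ct^{'\geqslant1}$. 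Finally, $A\in\ct^{'\leqslant0}$ because $A\in\ct^{\fd}\subseteq\thick(\cp)^{\perp_{\ct}}\cap{}^{\perp_{\ct}}\thick(\cp)$ by Remark \ref{Rem:4to3}(2), and the Hom-lifting property transfers $\Hom_{\cu}(A',\cm[k])=0$ ($k<0$) to $\Hom_{\ct}(A,\cm[k])=0$. The main obstacle is precisely this last step: one must ensure simultaneously that the lift $A$ and the cone $B$ land in the prescribed subcategories, and this is exactly what the Hom-lifting property is designed to deliver.
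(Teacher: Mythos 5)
Your proposal is correct, and all the ingredients you use are available in the paper at this point (the $t$-structure $({}^{\perp_{\cu}}\cm[<0],{}^{\perp_{\cu}}\cm[>0])$ on $\cu$ with aisle in $\cu^{\fd}=\pi(\ct^{\fd})$ is invoked from \cite[Theorem 4.9]{IY2018} together with Corollary \ref{Cor:induced Calabi--Yau triple} just before the statement, the identification $\cu^{\fd}=\ct^{\fd}$ is Lemma \ref{Lemm:V=T} with $\co=\cp$, and $\ct^{\fd}\subseteq\thick(\cp)^{\perp_{\ct}}\cap{}^{\perp_{\ct}}\thick(\cp)$ is Remark \ref{Rem:4to3}(2)); your Hom-lifting bijection for $A\in{}^{\perp_{\ct}}\thick(\cp)$ is the standard Verdier-quotient fact and is indeed what makes the lift of the truncation triangle land where it should. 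However, your main step differs from the paper's. For the Hom-vanishing and the containment $\ct^{'\leqslant0}\subseteq\ct^{\fd}$ both arguments coincide (the paper disposes of the containment in the paragraph preceding the theorem), but for the decomposition $\ct=\ct^{'\leqslant-1}*\ct^{'\geqslant0}$ the paper does not pull back the truncation triangle from $\cu$: it works inside $\ct$, writing $\ct=\bigcup_{l}\ct_{\geqslant-l}$ via the co-$t$-structure attached to the silting subcategory $\cm$, iterating Proposition \ref{Prop:triangle with heart and T} to get $\ct_{\geqslant-l}\subseteq\heartsuit[-d-1+l]*\cdots*\heartsuit[-d]*\pi^{-1}(\cu_{\geqslant0})$, and then using the relative Serre duality (CY3) to see that the $\heartsuit$-shift factors lie in $\ct^{'\leqslant-1}$. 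Your route is shorter and makes transparent that the theorem is a formal transfer of the Iyama--Yang $t$-structure along $\pi$, at the cost of the (routine but necessary) bookkeeping that the lifted morphism $\phi\colon A\to X$ has cone mapping to the correct truncation in $\cu$; the paper's route is more constructive, produces the aisle filtration by shifted hearts explicitly (in the spirit of \cite[Theorem 4.10]{IY2018}), and reuses Proposition \ref{Prop:triangle with heart and T}, which the paper needs elsewhere anyway.
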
	
	
\begin{proof}
Let $X\in\ct^{'\leqslant-1}$ and $Y\in\ct^{'\geqslant0}$. Then $$\Hom_{\ct}(X,Y)\cong\Hom_{\cu}(\pi(X),\pi(Y)).$$ Since $\pi(X)\in^{\perp_{\cu}}\!\cm[\leqslant0]$ and $\pi(Y)\in^{\perp_{\cu}}\!\cm[>0]$, it shows that $\Hom_{\ct}(X,Y)$ vanishes. So $\Hom_{\ct}(\ct^{'\leqslant-1},\ct^{'\geqslant0})=0$ holds.

Then it is enough to show $\ct=\ct^{'\leqslant-1}*\ct^{'\geqslant0}$. Here $\ct^{'\leqslant-1}=^{\perp_{\ct}}\!\!\cm[\leqslant0]\cap\thick(\cp)^{\perp_{\mathcal{T}}}\cap^{\perp_{\mathcal{T}}}\thick(\cp)$ and $\ct^{'\geqslant0}=\pi^{-1}(^{\perp_{\cu}}\!\cm[>0])=\pi^{-1}(\cu_{\geqslant0})$.

Since $\ct=\bigcup_{l\geqslant0}\ct_{\geqslant-l}$, it is sufficient to show $\ct_{\geqslant -l}\subseteq\ct^{'\leqslant-1}*\ct^{'\geqslant0}$. Using Proposition \ref{Prop:triangle with heart and T} repeatedly, we have
$$\ct_{\geqslant-l}\subseteq\heartsuit[-d-1+l]*\pi^{-1}(\cu_{\geqslant1-l})\subseteq\heartsuit[-d-1+l]*\heartsuit[-d-2+l]*\pi^{-1}(\cu_{\geqslant2-l})\subseteq\cdots.$$
And hence 

$$\ct_{\geqslant-l}\subseteq\heartsuit[-d-1+l]*\heartsuit[-d-2+l]*\cdots*\heartsuit[-d]*\pi^{-1}(\cu_{\geqslant0}).$$
By the relative Serre duality, $\heartsuit[-d-1+l]*\heartsuit[-d-2+l]*\cdots*\heartsuit[-d]\subseteq\ct^{'\leqslant-1}$ holds. Therefore $(\ct^{'\leqslant0},\ct^{'\geqslant0})$ is a $t$-structure on $\ct$.
	
\end{proof}	

\begin{Rem}
When $\cp$ is the zero category, the $t$-structure above is called a left adjacent $t$-structure on $\ct$ \cite[Theorem 4.10]{IY2018}.	
		
\end{Rem}

	\begin{Ex}
		Consider the following ice quiver $(Q,F)$	

		\[
		\begin{tikzcd}
			&2\arrow[dr,"b"]\arrow[dd,dashed]&\\
			1\arrow[ur,"a"]\arrow[d]&&3\arrow[ll,"c",pos=0.55]\arrow[d]\\
			\color{blue}\boxed{4}&\color{blue}\boxed{5}&\color{blue}\boxed{6}\,,
		\end{tikzcd}
		\]
		with potential $W=cba$. The frozen subquiver is given by $F=\{4,5,6\}$.
		Then the associated relative Ginzburg dg algebra $\bm{\Gamma}$ is the following graded quiver
		\[
		\begin{tikzcd}
			&2\arrow[dr,"b",shift left=0.7]\arrow[dd,dashed,"f",swap,pos=0.85,shift left=-0.7]\arrow[dl,shift right=-0.4ex,red,"a^{*}"{yshift=4pt}]\arrow[out=60,in=120,loop,green,"t_{2}",swap]&\\
			1\arrow[ur,"a",shift right=-0.4ex]\arrow[d,"e",swap,shift left=-0.7]\arrow[out=150,in=210,loop,green,"t_{1}",swap]\arrow[rr,shift left=0.7,red,"c^{*}",pos=0.43]&&3\arrow[ll,"c",pos=0.595,shift left=0.7]\arrow[d,"g",swap,shift left=-0.7]\arrow[out=330,in=30,loop,green,"t_{3}",swap]\arrow[ul,"b^{*}"{yshift=2.5pt},shift left=0.7,red]\\
			\color{blue}\boxed{4}\arrow[u,"e^{*}",swap,shift left=-0.7,red]&\color{blue}\boxed{5}\arrow[uu,"f^{*}",swap,shift left=-0.7,red,pos=0.15]&\color{blue}\boxed{6}\arrow[u,"g^{*}",swap,shift left=-0.7,red]
		\end{tikzcd}
		\]
		with $|a^{*}|=|b^{*}|=|c^{*}|=|e^{*}|=|f^{*}|=|g^{*}|=-1$ and $|t_{1}|=|t_{2}|=|t_{3}|=-2$.
		The differential $d$ takes the following values
		$$d(a^{*})=cb,\quad d(b^{*})=ac,\quad d(c^{*})=ba,$$
		$$d(e^{*})=0,\quad d(f^{*})=0,\quad d(g^{*})=0,$$
		$$d(t_{1})=-a^{*}a+cc^{*}-e^{*}e,$$
		$$d(t_{2})=aa^{*}-b^{*}b-f^{*}f,$$
		$$d(t_{3})=bb^{*}-c^{*}c-g^{*}g.$$
	Let $\alpha$ be the sum of idempotents associated with frozen vertices. By \cite[Section 4]{Wu2023}, $(\per(\bm{\Gamma}),\pvd_{\alpha}(\bm{\Gamma}),\add(\bm{\Gamma}),\add(\alpha\bm{\Gamma}))$ is a $3$-Calabi--Yau quadruple. 
	
	Let $\beta=e_{2}+e_{5}$. Then the silting reduction of $(\per(\bm{\Gamma}),\pvd_{\alpha}(\bm{\Gamma}),\add(\bm{\Gamma}),\add(\alpha\bm{\Gamma}))$ with respect to $\co=\add(\beta\bm{\Gamma})$ is $(\per(\bm{\Gamma}'),\pvd_{\alpha'}(\bm{\Gamma}'),\add(\bm{\Gamma}'),\add(\alpha'\bm{\Gamma}'))$, where $\bm{\Gamma}'$ is the following dg algebra
	\[
	\begin{tikzcd}
		1\arrow[d,"e",swap,shift left=-0.7]\arrow[out=150,in=210,loop,green,"t_{1}",swap]\arrow[rr,shift left=0.7,red,"c^{*}",pos=0.43]&&3\arrow[ll,"c",pos=0.595,shift left=0.7]\arrow[d,"g",swap,shift left=-0.7]\arrow[out=330,in=30,loop,green,"t_{3}",swap]\\
		\color{blue}\boxed{4}\arrow[u,"e^{*}",swap,shift left=-0.7,red]&&\color{blue}\boxed{6}\arrow[u,"g^{*}",swap,shift left=-0.7,red]
	\end{tikzcd}
	\]
	with differentials $$d(c^{*})=0,\quad d(e^{*})=0,\quad d(g^{*})=0$$
	$$d(t_{1})=cc^{*}-e^{*}e,$$
	$$d(t_{3})=-c^{*}c-g^{*}g$$
and $\alpha'=e_{4}+e_{6}$.
	
	\end{Ex}
		
	\subsection{The relative AGK's relative cluster category and Higgs category of a Calabi--Yau quadruple}\ \label{subsection:AGK relative cluster category}
	
	Let $(\ct,\ct^{\fd},\cm,\cp)$ be a $(d+1)$-Calabi--Yau quadruple.
	\begin{Def}\rm
		The \emph{Amiot--Guo--Keller relative cluster category} of $(\ct,\ct^{\fd},\cm,\cp)$  
		is defined as the following triangle quotient
		$$\cc\coloneqq\ct/\ct^{\fd}.$$
	\end{Def}
	Denote by $\tau\colon\ct\ra\cc$ the canonical projection functor. 
	Let $\cf$ be the following full subcategory of $\ct$
	$$\cf\coloneqq\cm*\cm[1]*\cdots*\cm[d-1]\cap\cz $$ where $$\cz=^{\perp_{\ct}}\!\!(\cp[>\!0])
	\cap(\cp[<\!0])^{\perp_{\ct}}.$$
We call $\cf$ the \emph{relative fundamental domain} associated with $(\ct,\ct^{\fd},\cm,\cp)$.

Recall $\pi$ is the quotient functor $\ct\ra\cu=\ct/\thick(\cp)$ and $\sigma^{\leqslant i}$ and $\sigma^{\geqslant i+1}$ are the truncation functors associated with the $t$-structures $(\ct^{\leqslant i},\ct^{\geqslant i})\coloneqq(\ct^{\leqslant0}[-i],\ct^{\geqslant0}[-i])$.
\begin{Lem}\label{Lem:sigmaX in F}
Let $X$ be an object of $\cz$ such that $\pi(X)$ lies in $^{\perp_{\cu}}\cm[>d])$. Then $\sigma^{\leqslant0}X$ is in $\cf$.
\end{Lem}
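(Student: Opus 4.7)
The plan is to consider the canonical truncation triangle
\[
\sigma^{\leqslant 0} X \to X \to \sigma^{\geqslant 1} X \to (\sigma^{\leqslant 0} X)[1]
\]
coming from the $t$-structure $(\ct^{\leqslant 0}, \ct^{\geqslant 0})$ on $\ct$, and to verify separately that $\sigma^{\leqslant 0} X$ lies in $\cz$ and in $\cm * \cm[1] * \cdots * \cm[d-1]$. Observe that $\sigma^{\geqslant 1} X$ lies in $\ct^{\geqslant 1}\subseteq \ct^{\geqslant 0}\subseteq \ct^{\fd}$, and hence by Remark~\ref{Rem:4to3}(2) is two-sided orthogonal to $\thick(\cp)$.

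For the first containment, $\sigma^{\leqslant 0} X \in \cz$, I would apply $\Hom_\ct(-, \cp[n])$ and $\Hom_\ct(\cp, -[n])$ to the triangle above: the outer terms involving $\sigma^{\geqslant 1} X$ vanish by the orthogonality just noted, while the middle terms involving $X$ vanish for $n>0$ (respectively $n<0$) by $X\in\cz$. The resulting long exact sequences immediately give the required vanishing $\Hom_\ct(\sigma^{\leqslant 0}X,\cp[n])=0$ for $n>0$ and $\Hom_\ct(\cp[n],\sigma^{\leqslant 0}X)=0$ for $n<0$.

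For the second containment, I would pass to the quotient $\cu$ and invoke the Calabi--Yau-triple fundamental domain statement. Since $\pi$ is fully faithful on $\ct^{\fd}$ (Remark~\ref{Rem:4to3}(2)) and $\ct^{\geqslant 0}\subseteq\ct^{\fd}$, one checks directly that $\pi(\ct^{\geqslant 1})\subseteq \cu^{\geqslant 1}$; combined with $\pi(\sigma^{\leqslant 0} X)\in\cu^{\leqslant 0}$ coming from $\sigma^{\leqslant 0}X\in\pi^{-1}(\cu^{\leqslant 0})$, this shows that the $\pi$-image of our triangle is the canonical truncation triangle of $\pi(X)$ in $\cu$, so $\pi(\sigma^{\leqslant 0} X)$ coincides with the $t$-structure truncation $\sigma_\cu^{\leqslant 0}\pi(X)$ in $\cu$. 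By Corollary~\ref{Cor:induced Calabi--Yau triple} the triple $(\cu,\pi(\ct^{\fd}),\pi(\cm))$ is $(d+1)$-Calabi--Yau, and the hypothesis $\pi(X)\in {}^{\perp_\cu}\cm[>d]$ is precisely the one appearing in the Iyama--Yang fundamental-domain characterization \cite[Section 5]{IY2018} for such a triple. Applying it yields
\[
\sigma_\cu^{\leqslant 0}\pi(X)\in\pi(\cm)*\pi(\cm)[1]*\cdots*\pi(\cm)[d-1].
\]
Lifting the resulting $d$-step filtration back through the triangle equivalence $\cz/[\cp]\iso\cu$ of Theorem~\ref{Thm:silting reduction}, and using $\sigma^{\leqslant 0} X\in\cz$ from the first part, gives $\sigma^{\leqslant 0}X\in \cm*\cm[1]*\cdots*\cm[d-1]\cap\cz=\cf$.

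The main obstacle is the fundamental-domain theorem for Calabi--Yau triples: it is what pins down the upper shift bound $d-1$, and its proof is where the interplay between the $t$-structure $(\cu^{\leqslant 0},\cu^{\geqslant 0})$, the silting-induced co-t-structure $(\cu_{\geqslant 0},\cu_{\leqslant 0})$, and the relative Calabi--Yau duality is decisive; one may either cite \cite{IY2018} directly or, in the spirit of this section, reprove it by iterating Proposition~\ref{Prop:triangle with heart and T} on $X[-d]$ to produce a filtration in successive shifts of the heart that is then absorbed by $\sigma^{\geqslant 1}X$.
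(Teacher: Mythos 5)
Your proposal is correct and follows essentially the same route as the paper: the same truncation triangle, the long‑exact‑sequence argument showing $\sigma^{\leqslant0}X\in\cz$ (using $\sigma^{\geqslant1}X\in\ct^{\geqslant1}\subseteq\ct^{\fd}\subseteq\cz$), then identifying $\pi(\sigma^{\leqslant0}X)$ with the truncation of $\pi(X)$ in $\cu$, applying Iyama--Yang's fundamental-domain lemma for the Calabi--Yau triple $(\cu,\pi(\ct^{\fd}),\pi(\cm))$ (the paper cites \cite[Lemma 5.11]{IY2018}), and lifting through the equivalence $\cz/[\cp]\iso\cu$. Your write-up merely spells out details the paper leaves implicit.
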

\begin{proof}
Consider the triangle
$$\sigma^{\leqslant0}X\ra X\ra\sigma^{\geqslant1}X\ra\sigma^{\leqslant0}X[1].$$ Since $X$ and $\sigma^{\geqslant1}X$ are in $\cz$, then $\sigma^{\leqslant0}X$ also lies in $\cz$.

By \cite[Lemma 5.11]{IY2018}, we see that $\pi(\sigma^{\leqslant0}X)$ lies in $\pi(\cm)*\pi(\cm)[1]*\cdots*\pi(\cm)[d-1]\subseteq\cu$. Then $\pi(\sigma^{\leqslant0}X)$ fits into the following triangles in $\cu$
$$M_{1}\to N_{0}\to \pi(\sigma^{\leqslant0}X)\to \Si M_{1},$$
$$ M_{2}\to N_{1}\to M_{1}\to\Si M_{2},$$
$$ \cdots $$
$$M_{d-2}\to N_{d-3}\to M_{d-3}\to\Si M_{d-2}
$$
with $ N_{0} $, $ N_{1} $, $ \cdots $, $ N_{d-3} $ and $ M_{d-2} $ in $ \pi(\cm) $.

By the $k$-linear equivalence $\pi\colon\cm/[\cp]\iso\pi(\cm)$, we can lift the above tangles to $\ct$, \cite[Proposition 4.6]{KW2307}. This shows that $\sigma^{\leqslant0}X$ is in $\cm*\cm[1]*\cdots*\cm[d-1]$. Hence $\sigma^{\leqslant0}X$ belongs to $\cf$.

\end{proof}

\begin{Rem}
	Let $(Q,F,W)$ be an ice quiver with potential and $\bm{\Gamma}$ the associated relative Ginzburg algebra. Let $e$ be the sum of idempotents associated with frozen vertices and $\pvd_{e}(\bm{\Gamma})$ the thick subcategory of $\per(\bm{\Gamma})$ generated by simple modules associated with all unfrozen vertices. We obtain a 3-Calabi--Yau quadruple $(\per(\bm{\Gamma}),\pvd_{e}(\bm{\Gamma}),\add(\bm{\Gamma}),\add(e\bm{\Gamma}))$. Then the category $\cf$ is exactly the 
	relative fundamental domain of $\per(\bm{\Gamma})$ defined in \cite{Wu2023}.
\end{Rem}

%

The following results generalize the fundamental theorems of 
Amiot~\cite{Am2008}, Guo~\cite{Guo2011}, Wu~\cite{Wu2023}, 
and Iyama--Yang~\cite{IY2018} to the setting of $(d+1)$-Calabi--Yau quadruples.

\begin{Lem}\label{Lem:morohism in C}
	Let $X$ be an object of $\ct$ such that $\pi(X)\in\cu_{\leqslant0}$ and $Y\in\ct$. Let $g$ be an element of $\Hom_{\cc}(X,Y)$. Then it has a representative of the form $X\xleftarrow{s} Z\xrightarrow{f} Y$ such that the cone of $s$ belongs to $\pi^{-1}(\cu_{\leqslant0})\cap\ct^{\fd}$.
\end{Lem}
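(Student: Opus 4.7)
Given a representative $X\xleftarrow{s_0}Z_0\xrightarrow{f_0}Y$ of $g$ with cone $K_0=\cone(s_0)\in\ct^{\fd}$, my plan is to refine the roof by constructing a morphism $t\colon Z\to Z_0$ in $\ct$ with $\cone(t)\in\ct^{\fd}$ and $\cone(s_0\circ t)\in\pi^{-1}(\cu_{\leqslant 0})\cap\ct^{\fd}$. The cone condition on $t$ ensures that the refined roof $X\xleftarrow{s_0 t}Z\xrightarrow{f_0 t}Y$ represents the same morphism $g$.

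First I would truncate $K_0$ using the bounded $t$-structure on $\ct^{\fd}$ (Lemma~\ref{Lemma:bounded t-structure on Tfd}), obtaining a triangle
$$\sigma^{\leqslant 0}K_0\xrightarrow{p}K_0\xrightarrow{q}\sigma^{\geqslant 1}K_0\longrightarrow\sigma^{\leqslant 0}K_0[1]$$
with $\sigma^{\leqslant 0}K_0\in\ct^{\fd}\cap\ct^{\leqslant 0}$ and $\sigma^{\geqslant 1}K_0\in\ct^{\geqslant 1}\subseteq\ct^{\fd}$. The key observation is that $\cu_{\leqslant 0}\subseteq\cu^{\leqslant 0}$ (each $\cm[i]$ with $i\geqslant 0$ lies in $\cm[{<}0]^{\perp_\cu}$ by the silting property, and $\cu^{\leqslant 0}$ is extension-closed), so by the $t$-structure axiom $\Hom_\cu(\cu^{\leqslant 0},\cu^{\geqslant 1})=0$ together with the hypothesis $\pi(X)\in\cu_{\leqslant 0}$, one has $\Hom_\cu(\pi(X),\pi(\sigma^{\geqslant 1}K_0))=0$. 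By Remark~\ref{Rem:4to3}(2) this Hom equals $\Hom_\ct(X,\sigma^{\geqslant 1}K_0)$, which therefore vanishes. Consequently, the composition $q\circ u\colon X\to K_0\to\sigma^{\geqslant 1}K_0$ (where $u$ is the natural map from the triangle of $s_0$) is already zero in $\ct$, so $u$ factors as $u=p\circ v$ for some $v\colon X\to\sigma^{\leqslant 0}K_0$.

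Completing $v$ to a triangle $Z\to X\xrightarrow{v}\sigma^{\leqslant 0}K_0\to Z[1]$ and applying the octahedral axiom to the composable pair $(v,p)$, I would obtain the desired morphism $t\colon Z\to Z_0$: the fourth octahedral triangle reads $Z\to Z_0\to\sigma^{\geqslant 1}K_0[-1]\to Z[1]$, so $\cone(t)=\sigma^{\geqslant 1}K_0[-1]\in\ct^{\geqslant 2}\subseteq\ct^{\fd}$, and the cone of $s:=s_0\circ t$ is identified with $\sigma^{\leqslant 0}K_0\in\ct^{\fd}\cap\ct^{\leqslant 0}$.

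The remaining and most delicate step is to verify $\pi(\sigma^{\leqslant 0}K_0)\in\cu_{\leqslant 0}$. Using the boundedness of the $t$-structure on $\ct^{\fd}$, one can write $\pi(\sigma^{\leqslant 0}K_0)$ as an iterated extension of shifts $\pi(\heartsuit)[i]$ with $i\geqslant 0$. Since $\cu_{\leqslant 0}$ is closed under extensions and under the shift $[1]$ (directly from the definition $\cu_{\leqslant 0}=\bigcup_{i\geqslant 0}(\cm*\cm[1]*\cdots*\cm[i])$ together with $\cm[i][1]=\cm[i+1]$), it suffices to show $\pi(\heartsuit)\subseteq\cu_{\leqslant 0}$. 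This is the hardest part of the argument: it should follow from the analog of \cite[Proposition~4.12]{IY2018} applied to the induced Calabi--Yau triple $(\cu,\cu^{\fd},\pi(\cm))$ from Corollary~\ref{Cor:induced Calabi--Yau triple}, which places the $t$-structure heart of $\cu$ inside $\cm*\cm[1]*\cdots*\cm[d]\subseteq\cu_{\leqslant 0}$.
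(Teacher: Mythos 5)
Your proposal follows essentially the same route as the paper's proof: start from a roof whose denominator has cone $K_0\in\ct^{\fd}$, truncate $K_0$ with respect to $(\ct^{\leqslant0},\ct^{\geqslant0})$, use $\Hom_{\ct}(X,\sigma^{\geqslant1}K_0)\cong\Hom_{\cu}(\pi(X),\pi(\sigma^{\geqslant1}K_0))=0$ to factor $X\to K_0$ through $\sigma^{\leqslant0}K_0$, and replace the roof by one whose denominator has cone $\sigma^{\leqslant0}K_0$. The paper packages the replacement as a morphism of triangles where you invoke the octahedral axiom explicitly; these are the same construction, and your remark that $\cone(t)\in\ct^{\fd}$ guarantees the refined roof still represents $g$ just makes explicit a point the paper leaves implicit.

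The one place you wobble is the final step, $\pi(\sigma^{\leqslant0}K_0)\in\cu_{\leqslant0}$. The statement you attribute to (an analogue of) \cite[Proposition 4.12]{IY2018} --- that the heart of $(\cu^{\leqslant0},\cu^{\geqslant0})$ lies in $\cm*\cm[1]*\cdots*\cm[d]$ --- is not what that proposition says, and with the bound $d$ it is false: a nonzero heart object $S$ lies in $\cu^{\fd}$, so the relative Calabi--Yau property gives $\Hom_{\cu}(S,S[d+1])\cong D\Hom_{\cu}(S,S)\neq0$, whereas $S\in\cm*\cdots*\cm[d]$ together with $S\in\cu^{\leqslant0}$ would force this space to vanish; the correct bound is $d+1$. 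Your argument survives, since any finite bound suffices for membership in $\cu_{\leqslant0}$, but the cleanest repair avoids the heart altogether: for any $U\in\cu^{\leqslant0}$ take the co-$t$-structure decomposition $A\to U\to B\to A[1]$ with $A\in\cu_{\geqslant1}$ and $B\in\cu_{\leqslant0}$ (the latter class is closed under direct summands); since $A$ is a finite extension of objects of $\cm[j]$ with $j\leqslant-1$ and $\Hom_{\cu}(\cm[j],U)=0$ for $j<0$ by the definition $\cu^{\leqslant0}=\cm[<0]^{\perp_{\cu}}$, the map $A\to U$ vanishes, the triangle splits, and $U$ is a direct summand of $B$, hence lies in $\cu_{\leqslant0}$. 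Applying this to $U=\pi(\sigma^{\leqslant0}K_0)\in\cu^{\leqslant0}$ yields exactly the membership the lemma asserts, and incidentally supplies the justification that the paper itself leaves terse at this point.
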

\begin{proof}
	By definition, the morphism $g$ can be written as $X\xleftarrow{s} Z\xrightarrow{f} Y$ such that there exists a triangle
	$$Z\xrightarrow{s}X\xrightarrow{t}W\ra Z[1]$$
	with $W\in\ct^{\fd}$. For the object $W$, we have a triangle in $\ct$
	$$\sigma^{\leqslant0}W\ra W\ra\sigma^{\geqslant1}W\ra\sigma^{\leqslant0}W[1]$$
	with $\pi(\sigma^{\leqslant0}W)\in\cu^{\leqslant0}$ and $\sigma^{\geqslant1}W\in\cm[>\!0]^{\perp_{\mathcal{T}}}\cap\thick(\cp)^{\perp_{\mathcal{T}}}\cap\,^{\perp_{\mathcal{T}}}\thick(\cp)$.
	Hence $\Hom_{\ct}(X,\sigma^{\geqslant1}W)\cong\Hom_{\cu}(\pi(X),\pi(\sigma^{\geqslant1}W))=0$. Thus $t$ factors through $\sigma^{\leqslant0}W\ra W$. We obtain a commutative diagram of triangles
	\[
	\begin{tikzcd}
		&&\sigma^{\geqslant1}W&\\
		Z\arrow[r,"s"]&X\arrow[r,"t"]&W\arrow[r]\arrow[u]&Z[1]\\
		Z'\arrow[r,"s"]\arrow[u,"h"]&X\arrow[u,equal]\arrow[r]&\sigma^{\leqslant0}W\arrow[r]\arrow[u]&Z'[1]\arrow[u].
	\end{tikzcd}
	\]
By Lemma \ref{Lemma:bounded t-structure on Tfd}, the object $\sigma^{\leqslant0}W$ lies in $\ct^{\fd}$ and $\pi(\sigma^{\leqslant0}W)\in\cu_{\leqslant0}$.	
\end{proof}

\begin{Lem}\label{Lem:faithful functor of F}
The projection functor $\tau\colon\ct\ra\cc$ induces a bijection
$$\Hom_{\ct}(U,V)\ra\Hom_{\cc}(U,V)$$ for any $V\in\ct_{\geqslant1-d}$ and $U\in\ct$ which satisfies $\pi(U)\in\cu_{\leqslant0}$.
		
Consequently, it restricts to a fully faithful functor $\cf\hookrightarrow\cc$.
\end{Lem}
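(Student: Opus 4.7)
The plan is to establish the bijection by separately proving surjectivity and injectivity, using the representative form of Lemma~\ref{Lem:morohism in C}, the Calabi--Yau duality of~$\mathrm{(CY3)}$, and the $t$-structure from~$\mathrm{(CY5)}$. A small but essential observation is that since $\pi(\cm)$ is silting in $\cu$, we have $\cu_{\leqslant0}\subseteq\cu^{\leqslant0}=\cm[<\!0]^{\perp_{\cu}}$; hence the hypothesis $\pi(U)\in\cu_{\leqslant0}$ implies $U\in\ct^{\leqslant0}$, and any object with image in $\cu_{\leqslant0}$ is Hom-orthogonal to $\cm[k]$ for $k>0$ (using also $\mathrm{(CY2)}$ to pass between Hom's in $\ct$ and $\cu$ for objects of $\ct^{\fd}$).

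For surjectivity, given $g\in\Hom_{\cc}(U,V)$, Lemma~\ref{Lem:morohism in C} yields a representative $U\xleftarrow{s}Z\xrightarrow{f}V$ whose cone $W$ lies in $\pi^{-1}(\cu_{\leqslant0})\cap\ct^{\fd}$. From the triangle $Z\xrightarrow{s}U\to W\to Z[1]$, the map $f$ lifts to a morphism $U\to V$ as soon as $\Hom_{\ct}(W[-1],V)=0$. By~$\mathrm{(CY3)}$ this group is dual to $\Hom_{\ct}(V,W[d])$. Writing $V\in\ct_{\geqslant1-d}$ as an iterated extension of shifts $\cm[j]$ with $j\leqslant d-1$, it suffices to verify $\Hom_{\ct}(\cm,W[d-j])=0$ for each $d-j\geqslant1$; this follows from the observation above since $\Hom_{\ct}(\cm,W[k])\simeq\Hom_{\cu}(\cm,\pi(W)[k])$ for $W\in\ct^{\fd}$ and the latter vanishes for $k>0$.

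For injectivity, the condition $\tau(\phi)=0$ means $\phi:U\to V$ factors through some $W\in\ct^{\fd}$ as $\phi=g\circ t$. Since $U\in\ct^{\leqslant0}$ and $\sigma^{\geqslant1}W\in\ct^{\geqslant1}$, the canonical truncation triangle $\sigma^{\leqslant0}W\to W\to\sigma^{\geqslant1}W\to\sigma^{\leqslant0}W[1]$ together with $\Hom_{\ct}(U,\sigma^{\geqslant1}W)=0$ allows me to replace $t$ by a morphism $U\to\sigma^{\leqslant0}W$, and $\sigma^{\leqslant0}W\in\ct^{\fd}\cap\ct^{\leqslant0}$ by Lemma~\ref{Lemma:bounded t-structure on Tfd}. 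Thus it suffices to show $\Hom_{\ct}(\sigma^{\leqslant0}W,V)=0$; by~$\mathrm{(CY3)}$ this equals $D\Hom_{\ct}(V,\sigma^{\leqslant0}W[d+1])$, and the same analysis as for surjectivity, now with the stronger bound $d+1-j\geqslant2>0$, produces the vanishing.

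Finally, for the consequence, an object $X\in\cf=\cm*\cm[1]*\cdots*\cm[d-1]\cap\cz$ is an iterated extension of $\cm[j]$ with $0\leqslant j\leqslant d-1$, so $X\in\ct_{\geqslant1-d}$ and $\pi(X)\in\pi(\cm)*\cdots*\pi(\cm)[d-1]\subseteq\cu_{\leqslant0}$; the first part then applies to every pair of objects in $\cf$, yielding the fully faithful functor $\cf\hookrightarrow\cc$. I expect the main obstacle to be the injectivity: arranging that the truncation $\sigma^{\leqslant0}W$ of the factoring object $W\in\ct^{\fd}$ remains in $\ct^{\fd}\cap\ct^{\leqslant0}$ (so that Calabi--Yau duality is available to the resulting morphism) is the delicate point, whereas surjectivity and the restriction to $\cf$ become essentially bookkeeping once the bridge $\cu_{\leqslant0}\subseteq\cu^{\leqslant0}$ is in hand.
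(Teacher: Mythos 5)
Your proposal is correct and follows essentially the same route as the paper: injectivity by factoring a null-homotopic (in $\cc$) morphism through some $W\in\ct^{\fd}$, truncating $W$, killing the $\sigma^{\geqslant1}W$ component by orthogonality and the $\sigma^{\leqslant0}W$ component by the relative Calabi--Yau duality; surjectivity via the representative from Lemma \ref{Lem:morohism in C} and the vanishing of $\Hom_{\ct}(W[-1],V)$ by the same duality. The only difference is presentational: where the paper invokes the (co-)t-structure orthogonalities $\Hom_{\cu}(\cu_{\geqslant1-d},\cu_{\leqslant-d})=0$ directly, you unwind them by decomposing $V$ into shifts $\cm[j]$, $j\leqslant d-1$, and using the silting/aisle orthogonality, which is equivalent.
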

\begin{proof}
Let $U$ be an object of $\ct$ such that $\pi(U)\in\cu_{\leqslant0}$ and $V$ an object of $\ct_{\geqslant1-d}$. We first show that $\Hom_{\ct}(U,V)\ra
\Hom_{\cc}(U,V)$ is injective.

Assume that $f\in\Hom_{\ct}(U,V)$ becomes 
zero in $\cc$. By \cite[Lemma 2.1.26]{Neeman2001}, $f$ factors through some
$W\in\ct^{\fd}$. By Lemma \ref{Lemma:bounded t-structure on Tfd}, there exists
a triangle
$$\sigma^{\leqslant0}W\ra W\ra\sigma^{\geqslant1}W\ra\sigma^{\leqslant0}W[1]$$
such that $\pi(\sigma^{\leqslant0}W)\in\cu^{\leqslant0}$ and
$\sigma^{\geqslant1}W\in\cm[>\!0]^{\perp_{\mathcal{T}}}\cap\thick(\cp)^{\perp_{\mathcal{T}}}\cap\,^{\perp_{\mathcal{T}}}\thick(\cp)$. 

Since $\Hom_{\ct}(U,\sigma^{\geqslant1}W)\cong\Hom_{\cu}(\pi(U),\pi(\sigma^{\geqslant1}W))$ vanishes, the map $f$ also factors through
$\sigma^{\leqslant0}W$ along $\sigma^{\leqslant0}W\ra W$, i.e. we have the following
commutative diagram in $\ct$
\[
\begin{tikzcd}
	U\arrow[rr,"f"]\arrow[d]\arrow[rd]&&V\\
	\sigma^{\leqslant0}W\arrow[r]&W\arrow[r]\arrow[ur]&\sigma^{\geqslant1}W\arrow[r]&\sigma^{\leqslant0}W[1].
\end{tikzcd}
\]

By the relative $(d+1)$-Calabi--Yau property, we have
$$\Hom_{\ct}(\sigma^{\leqslant0}W,V)=D\Hom_{\ct}(V,\sigma^{\leqslant0}W[d+1])
=0$$
as $V\in\ct_{\geqslant1-d}$.
Thus $f$ is zero.

Next we show that $\Hom_{\ct}(U,V)\ra
\Hom_{\cc}(U,V)$ is surjective.
Let $g\colon U\ra V$ be a morphism in $\cc$. By Lemma \ref{Lem:morohism in C}, it has a representative of the form $U\xleftarrow{s}Z\xrightarrow{f}V$ such that the cone $W$ of $s$ belongs to $\pi^{-1}(\cu_{\leqslant0})\cap\ct^{\fd}$. Then we have an exact sequence
$$\Hom_{\ct}(U,V)\xrightarrow{s^{*}}\Hom_{\ct}(Z,V)\ra\Hom_{\ct}(W[-1],V).$$

Since $W[-1]\in\ct^{\fd}\subseteq\thick(\cp)^{\perp_{\ct}}\cap^{\perp_{\ct}}\thick(\cp)$, by the relative $(d+1$)-Calabi--Yau property, we have 
$$\Hom_{\ct}(W[-1],V)\cong D\Hom_{\ct}(V,W[d])\cong D\Hom_{\cu}(\pi(V),\pi(W)[d])=0.$$

The last equality holds since $\pi(V)\in\cu_{\geqslant1-d}$ and $\pi(W)[d]\in\cu_{\leqslant-d}$. Therefore there exits $h\in\Hom_{\ct}(Z,V)$ such that $f=hs$. Hence $U\xleftarrow{s}Z\xrightarrow{f}V$ is equivalent to $h\colon U\ra V$. This implies that $\Hom_{\ct}(U,V)\ra
\Hom_{\cc}(U,V)$ is surjective.

\end{proof}

	
	\begin{Def}\rm
		The \emph{Higgs category} $\ch$ of $(\ct,\ct^{\fd},\cm,\cp)$ is defined as a 
		full subcategory of $\cc$
		$$\ch\coloneqq(\cp[<0])^{\perp_{\cc}}\cap^{\perp_{\cc}}\!(\cp[>0]).$$
	\end{Def}
\begin{Rem}
The category $\mathcal{P}$ may not satisfy condition (P1) in \cite[Section 3.1]{IY2018}. 
We only have a fully faithful embedding 
\[
\mathcal{H}/[\mathcal{P}] \hookrightarrow \mathcal{U}/\mathcal{U}^{\mathrm{fd}}.
\]
However, since $\mathcal{U}$ is Hom-finite by our assumption, we will show in Theorem~\ref{Thm:Higgs is Fro} that this embedding is an equivalence.	
\end{Rem}

\begin{Thm}\label{Thm:F=H}
	The functor $\tau\colon\ct\ra\cc$ induces an equivalence of additive categories.
	$$\tau\colon\cf\iso\ch.$$
\end{Thm}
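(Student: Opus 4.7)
The plan is to verify three things: that $\tau(\cf)$ lands in $\ch$, that $\tau|_{\cf}$ is fully faithful, and that it is essentially surjective onto $\ch$. For the first, by Remark~\ref{Rem:4to3}(2), $\thick(\cp)$ is orthogonal to $\ct^{\fd}$ on both sides, so $\tau$ induces isomorphisms $\Hom_{\ct}(-,\cp[i]) \iso \Hom_{\cc}(-,\cp[i])$ and $\Hom_{\ct}(\cp[i],-) \iso \Hom_{\cc}(\cp[i],-)$. For $X \in \cf \subseteq \cz$, the $\cz$-condition kills $\Hom_{\ct}(X,\cp[>0])$ and $\Hom_{\ct}(\cp[<0],X)$; these vanishings transport to $\cc$, giving $\tau(X) \in \ch$.

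For full faithfulness, I would appeal directly to Lemma~\ref{Lem:faithful functor of F}. Any $X, Y \in \cf \subseteq \cm*\cm[1]*\cdots*\cm[d-1] \cap \cz$ satisfies $Y \in \ct_{\geqslant 1-d}$ (since $\cm*\cm[1]*\cdots*\cm[d-1]$ is contained in $\ct_{\geqslant 1-d}$ in the silting co-$t$-structure) and $\pi(X) \in \cu_{\leqslant 0}$, which are precisely the hypotheses of that lemma, yielding $\Hom_{\ct}(X,Y) \iso \Hom_{\cc}(\tau X, \tau Y)$.

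Essential surjectivity is the main obstacle. Given $H \in \ch$, fix any lift $H' \in \ct$ with $\tau(H') = H$; the orthogonality isomorphism from the first step gives $\Hom_{\ct}(H',\cp[>0]) \cong \Hom_{\cc}(H,\cp[>0]) = 0$ and similarly the other way, so $H' \in \cz$ automatically. I would then apply the left-adjacent $t$-structure $(\ct^{'\leqslant 0}, \ct^{'\geqslant 0})$ from Theorem~\ref{Thm:left t-structure}: the canonical triangle $\sigma^{'\leqslant 0} H' \to H' \to \sigma^{'\geqslant 1} H' \to \sigma^{'\leqslant 0} H'[1]$ has $\sigma^{'\leqslant 0} H' \in \ct^{'\leqslant 0} \subseteq \ct^{\fd}$ and $\sigma^{'\geqslant 1} H' \in \ct^{'\geqslant 1} = \pi^{-1}({}^{\perp_{\cu}}\cm[\geqslant 0]) \subseteq \pi^{-1}({}^{\perp_{\cu}}\cm[>d])$. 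Setting $H'' \coloneqq \sigma^{'\geqslant 1} H'$, the long exact sequences together with the $\cp$--$\ct^{\fd}$ orthogonality show $H'' \in \cz$, while $\tau(H'') \cong H$ in $\cc$ since the cone $\sigma^{'\leqslant 0} H'[1]$ lies in $\ct^{\fd}$.

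At this point $H''$ meets both hypotheses of Lemma~\ref{Lem:sigmaX in F}, which yields $\sigma^{\leqslant 0} H'' \in \cf$ for the $(CY5)$ truncation; since $\sigma^{\geqslant 1} H'' \in \ct^{\geqslant 1} \subseteq \ct^{\fd}$, we obtain $\tau(\sigma^{\leqslant 0} H'') = \tau(H'') = H$, completing the argument. The hard part is precisely verifying the perp-condition $\pi(H'') \in {}^{\perp_{\cu}}\cm[>d]$ required by Lemma~\ref{Lem:sigmaX in F}; this is what forces the two-step truncation, where the left-adjacent $t$-structure first produces the perp-condition and the $(CY5)$ truncation then brings the object into the fundamental domain $\cf$.
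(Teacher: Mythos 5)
Your proposal is correct and follows essentially the same route as the paper: the image lands in $\ch$ by the $\cp$--$\ct^{\fd}$ orthogonality, full faithfulness comes from Lemma~\ref{Lem:faithful functor of F}, and essential surjectivity is obtained by first truncating with the left-adjacent $t$-structure of Theorem~\ref{Thm:left t-structure} and then applying the $(\ct^{\leqslant0},\ct^{\geqslant0})$-truncation together with Lemma~\ref{Lem:sigmaX in F}. The only cosmetic difference is that you use the unshifted left-adjacent $t$-structure, so your intermediate object satisfies $\pi(H'')\in{}^{\perp_{\cu}}\cm[\geqslant 0]$, whereas the paper uses the $d$-shifted version landing in $\pi^{-1}({}^{\perp_{\cu}}\cm[>d])$; both satisfy the hypothesis of Lemma~\ref{Lem:sigmaX in F}, so the argument goes through unchanged.
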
	
\begin{proof}
		By Lemma~\ref{Lem:faithful functor of F}, we have a fully faithful embedding $\tau\colon\cf\hookrightarrow\cc$. Since $\cp$ is left and right orthogonal to $\ct^{\fd}$, the image $\tau(\cf)$ lies in $\ch$. Hence $\tau\colon\ct\ra\cc$ restricts to a fully faithful functor $\cf\hookrightarrow\ch$. It is enough to show that it is dense. 
		
		Let $X$ be an object of $\ch$ and view it as an object of $\ct$. Since $\thick(\cp)$ and $\ct^{\fd}$ are left orthogonal and right orthogonal to each other, we see that $X$ lies in $\cz=^{\perp_{\ct}}\!\!(\cp[>\!0])
		\cap(\cp[<\!0])^{\perp_{\ct}}\subseteq\ct$.

		By Theorem \ref{Thm:left t-structure}, we have $t$-structure $(\ct^{'\leqslant0},\ct^{'\geqslant0})$ on $\ct$ where 
		$\ct^{'\leqslant0}=^{\perp_{\ct}}\!\!\cm[<0]\cap\thick(\cp)^{\perp_{\mathcal{T}}}\cap^{\perp_{\mathcal{T}}}\thick(\cp)$ and $\ct^{'\geqslant0}=\pi^{-1}(^{\perp_{\cu}}\!\cm[>0])$. This gives a triangle in $\ct$
		$$Y\ra X\ra Z\ra Y[1]$$
		with $Y\in^{\perp_{\ct}}\!\!\cm[<d]\cap\thick(\cp)^{\perp_{\mathcal{T}}}\cap^{\perp_{\mathcal{T}}}\thick(\cp)\subseteq\ct^{\fd}$ and $Z\in\pi^{-1}(^{\perp_{\cu}}\!\cm[>d])$. Then we have $X\simeq Z$ in $\cc$ and $Z$ lies in $\cz$.
		
		Since $(\ct^{\leqslant0},\ct^{\geqslant0})$ is also a $t$-structure on $\ct$, there exits a triangle
		$$\sigma^{\leqslant0}Z\ra Z\ra\sigma^{\geqslant1}Z\ra\sigma^{\leqslant0}Z[1]$$
		with $\sigma^{\leqslant0}Z\in\ct^{\leqslant0}=\pi^{-1}(\cu^{\leqslant0})$ and $\sigma^{\geqslant1}Z\in\ct^{\geqslant1}=\cm[\geqslant\!0]^{\perp_{\mathcal{T}}}\cap\thick(\cp)^{\perp_{\mathcal{T}}}\cap\,^{\perp_{\mathcal{T}}}\thick(\cp)\subseteq\ct^{\fd}$. It is easy to see that $\sigma^{\leqslant0}Z$ also lies in $\cz$ and $Z\simeq\sigma^{\leqslant0}Z$ in $\cc$. Since $\pi(Z)\in^{\perp_{\cu}}\!\!\cm[>d]$, by Lemma \ref{Lem:sigmaX in F}, the object $\sigma^{\leqslant0}Z$ is in $\cf$. Thus the assertion follows.
		
%
%
%
%
%
%
%
%
%
%
%
%
%
%
%

\end{proof}

It is clear that $\ch$ is an extension closed subcategory of
$\cc$. Hence it carries a canonical extriangulated structure in the sense of Nakaoka-Palu \cite{NP2019} and $(\ch,\mathbb{E},\mathfrak{s})$ be described as follows$\colon$
\begin{itemize}
	\item[(1)]  For any two objects $X,Y\in\ch$, the $\mathbb{E}$-extension space $\mathbb{E}(X,Y)$ is given by $\Hom_{\cc}(X,Y[1])$.
	\item[(2)] For any $\delta\in\mathbb{E}(X,Y)=\Hom_{\cc}(X,Y[1])$, take a distinguished triangle
	$$X\xrightarrow{f}Y\xrightarrow{g}Z\xrightarrow{h}X[1]$$ and define $\mathfrak{s}(\delta)=[X\xrightarrow{f}Y\xrightarrow{g}Z]$. This $\mathfrak{s}(\delta)$ does not depend on the choice of the distinguished
	triangle above. 
\end{itemize}

%
%

\begin{Thm}\label{Thm:Higgs is Fro}
\begin{itemize}
	\item[(1)] The Higgs category $\ch$ is a Krull--Schmidt Frobenius extriangulated category with projective-injective objects $\cp$. And $\cm$ is a $d$-cluster-tilting subcategory of $\ch$, i.e. $\cm$ is is functorially finite in $\ch$ and for any $X\in\ch$, the following are equivalent
	\begin{itemize}
		\item[(a)] $X\in\ch$;
		\item[(b)] $\Hom_{\cc}(X,\cm[i])=0$ for $1\leqslant i\leqslant d-1$;
		\item[(c)] $\Hom_{\cc}(\cm,X[i])=0$ for $1\leqslant i\leqslant d-1$;
	\end{itemize}
	\item[(2)] We have the following equivalence of triangulated categories  $$\underline{\ch}=\ch/[\cp]\iso\cu/\cu^{\fd},$$ where $\cu=\ct/\thick(\cp)$. And $\cu/\cu^{\fd}$ is $d$-Calabi--Yau.
\end{itemize}
\end{Thm}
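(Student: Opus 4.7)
The plan is to deduce both parts by combining silting reduction of the triangulated category $\cc$ with respect to the presilting subcategory $\cp$ with Iyama--Yang's cluster-category theory applied to the induced Calabi--Yau triple $(\cu,\cu^{\fd},\cm)$ from Corollary~\ref{Cor:induced Calabi--Yau triple}.

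First I will verify the Frobenius structure on $\ch$. That $\cp$ consists of projective-injective objects is immediate from the definition of $\ch$: for $P\in\cp$ and $X\in\ch$ one has
$$\mathbb{E}(P,X)=\Hom_{\cc}(P[-1],X)=0$$
since $P[-1]\in\cp[<0]$ and $X\in(\cp[<0])^{\perp_{\cc}}$, and dually $\mathbb{E}(X,P)=\Hom_{\cc}(X,P[1])=0$ since $P[1]\in\cp[>0]$ and $X\in\,^{\perp_{\cc}}\!(\cp[>0])$. Since $\cp$ is presilting in $\cc$ (it is so in $\ct$ and $\cc$ is Hom-finite by (CY1)) and satisfies Conditions~\ref{condition_p} there, Proposition~\ref{Presilting to co-t-structure} produces, for every $X\in\ch$, approximation triangles $X\to P^{0}\to X'\to X[1]$ and $X''\to P_{0}\to X\to X''[1]$ in $\cc$ with $P^{0},P_{0}\in\cp$ and (using the co-$t$-structure) $X',X''\in\ch$. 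These furnish enough injectives and enough projectives, and the orthogonality computation above shows that the projective-injective class is exactly $\cp$.

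Second, for part (2), I apply the triangulated silting reduction Theorem~\ref{Thm:silting reduction} to the pair $(\cc,\cp)$, noting that $\ch$ is exactly the zone $\,^{\perp_{\cc}}(\cp[>0])\cap(\cp[<0])^{\perp_{\cc}}$ of $\cp$ inside $\cc$, with the triangulated structure on the quotient supplied by Theorem~\ref{Tiangule structure for Z/P}. This yields the triangle equivalence
$$\underline{\ch}=\ch/[\cp]\iso\cc/\thick(\cp).$$
By transitivity of Verdier quotients, together with Remark~\ref{Rem:4to3}(2) (which asserts that $\ct^{\fd}$ is both left and right orthogonal to $\thick(\cp)$, so that $\pi$ embeds $\ct^{\fd}$ in $\cu$ with image $\cu^{\fd}$), I identify
$$\cc/\thick(\cp)=(\ct/\ct^{\fd})/\thick(\cp)\simeq\ct/\thick(\cp\cup\ct^{\fd})\simeq\cu/\cu^{\fd}.$$
Applying the Iyama--Yang theorem \cite[Theorem 5.8]{IY2018} to the $(d+1)$-Calabi--Yau triple $(\cu,\cu^{\fd},\cm)$ from Corollary~\ref{Cor:induced Calabi--Yau triple} then gives that $\cu/\cu^{\fd}$ is Hom-finite and $d$-Calabi--Yau with $\cm$ a $d$-cluster-tilting subcategory.

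Finally I transfer the $d$-cluster-tilting property back to $\ch$. Functorial finiteness of $\cm$ in $\ch$ follows from Theorem~\ref{Thm:F=H}: every $X\in\ch\simeq\cf\subseteq\cm\ast\cm[1]\ast\cdots\ast\cm[d-1]$ admits left and right $\cm$-approximations obtained from its filtration. For the Ext characterization, Lemma~\ref{Lem:faithful functor of F} and the relative Serre duality of (CY3) match $\Hom_{\cc}(X,\cm[i])$ and $\Hom_{\cc}(\cm,X[i])$ for $1\leqslant i\leqslant d-1$ with the corresponding Ext groups of the image of $X$ in $\cu/\cu^{\fd}$; combined with $\cp\subseteq\cm$, the cluster-tilting characterization in $\cu/\cu^{\fd}$ then pulls back to the asserted characterization in $\ch$. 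The main obstacle is the compatibility of the two triangulated structures along the composite equivalence $\ch/[\cp]\iso\cc/\thick(\cp)\simeq\cu/\cu^{\fd}$, specifically the matching of Ext groups computed in $\cc$ (the ambient of $\ch$) with those computed in $\cu$ (the ambient of $\cu^{\fd}$) after two different sequences of Verdier quotients; this compatibility rests on the $t$-structures (CY4)--(CY5), Theorem~\ref{Thm:left t-structure}, and the careful lifting arguments of Lemma~\ref{Lem:faithful functor of F}.
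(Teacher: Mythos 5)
Your overall strategy is sound, and it differs from the paper's proof at two points. For the Frobenius structure the paper does not work with $\cp$-approximation triangles inside $\cc$: it lifts $X\in\ch$ to $\ct$, takes the left $\cp$-approximation there, and then uses the two $t$-structures (Theorem \ref{Thm:left t-structure} and (CY5)) together with Lemma \ref{Lem:sigmaX in F} to replace the cone by an object of $\cf\simeq\ch$. Your alternative --- using that $\cp$ is presilting in $\cc$ and satisfies Conditions \ref{condition_p} there (the unnamed lemma before the definition of $\ch$), so that the cone of a left or right $\cp$-approximation of an object of $\ch$ stays in the zone $\ch$, exactly as in the construction underlying Theorem \ref{Tiangule structure for Z/P} --- is correct and arguably shorter. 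Similarly, for part (2) the paper only invokes an argument analogous to \cite[Theorem 5.46]{Wu2023}, whereas you obtain $\ch/[\cp]\iso\cc/\thick(\cp)\simeq\cu/\cu^{\fd}$ from Theorems \ref{silting reduction}, \ref{Tiangule structure for Z/P} and \ref{Thm:silting reduction} applied to $(\cc,\cp)$, transitivity of Verdier quotients and Lemma \ref{Lemm:V=T}, and then apply Iyama--Yang to the triple of Corollary \ref{Cor:induced Calabi--Yau triple}; this is a legitimate and more self-contained route, modulo a sentence identifying the Nakaoka--Palu stable triangulated structure on $\ch/[\cp]$ with the structure of Theorem \ref{Tiangule structure for Z/P} (both suspensions are cones of left $\cp$-approximations, so this is routine). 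One small omission in part (1): that every injective of $\ch$ lies in $\cp$ is not given by the orthogonality computation; it needs the splitting argument (an injective $J$ admits an inflation $J\to I_J$ with $I_J\in\cp$, which splits, so $J$ is a summand of $I_J$), as in the paper.

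The genuine weak point is the transfer of the $d$-cluster-tilting property back to $\ch$. Lemma \ref{Lem:faithful functor of F} compares $\Hom_{\ct}$ with $\Hom_{\cc}$; it does not provide the comparison your argument actually needs, namely that for $X\in\ch$, $M\in\cm$ and $1\leqslant i\leqslant d-1$ the functor $\cc\to\cc/\thick(\cp)\simeq\cu/\cu^{\fd}$ induces an isomorphism $\Hom_{\cc}(X,M[i])\cong\Hom_{\cu/\cu^{\fd}}(X,M[i])$ (and dually); both injectivity and surjectivity of this map are used, since the characterization (b)/(c) must be transported in both directions. This is the actual content of ``pulling back'' Iyama--Yang's characterization, and it has to be proved, e.g.\ by showing that any morphism $X\to M[i]$ factoring through an object of $\thick(\cp)$ vanishes (decompose the object of $\thick(\cp)$ by the co-$t$-structure of Section \ref{Section:Reductions} and use the orthogonality relations defining $\ch$ and the presilting property of $\cp$), together with a lifting argument for roofs; this is precisely where the analogue of \cite[Theorem 5.46]{Wu2023} does its work. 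In addition, the filtration of $X\in\cf$ inside $\cm*\cm[1]*\cdots*\cm[d-1]$ produces right $\cm$-approximations, not left ones; covariant finiteness of $\cm$ in $\ch$ requires a dual argument (a dual fundamental domain, or Serre duality combined with Hom-finiteness). So your outline is the right one, but the final paragraph of your proposal currently attributes the key comparison to lemmas that do not prove it.
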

\begin{proof}
Let $I$ be an object in $\cp$. For any distinguished triangle in $\ch$
$$X\ra Y\ra Z\dasharrow,$$
the space $\mathbb{E}(Z,I)=\Hom_{\cc}(Z[-1],I)=0$. Hence we have the following exact sequence
$$\Hom_{\ch}(Y,I)\ra\Hom_{\ch}(X,I)\ra0.$$ Thus any object in $\cp$ is injective. Now let $X$ be an object in $\ch$. Since $\cp$ is functorially finite in $\ct$, there exists a triangle in $\ct$
$$X\xrightarrow{l_{X}} I_{X}\ra X'\ra X[1]$$
with $l_{X}$ a left $\cp$-approximation and $X'\in\cz$.

By Theorem \ref{Thm:left t-structure}, we have $t$-structure $(\ct^{'\leqslant0},\ct^{'\geqslant0})$ on $\ct$ where 
$\ct^{'\leqslant0}=^{\perp_{\ct}}\!\!\cm[<0]\cap\thick(\cp)^{\perp_{\mathcal{T}}}\cap^{\perp_{\mathcal{T}}}\thick(\cp)$ and $\ct^{'\geqslant0}=\pi^{-1}(^{\perp_{\cu}}\!\cm[>0])$. This gives a triangle in $\ct$
$$Y'\ra X'\ra Z'\ra Y[1]$$
with $Y'\in^{\perp_{\ct}}\!\!\cm[<d]\cap\thick(\cp)^{\perp_{\mathcal{T}}}\cap^{\perp_{\mathcal{T}}}\thick(\cp)\subseteq\ct^{\fd}$ and $Z'\in\pi^{-1}(^{\perp_{\cu}}\!\cm[>d])$. Then we have $X\simeq Z'$ in $\cc$ and $Z'$ lies in $\cz$.

Since $(\ct^{\leqslant0},\ct^{\geqslant0})$ is also a $t$-structure on $\ct$, there exits a triangle
$$\sigma^{\leqslant0}Z'\ra Z'\ra\sigma^{\geqslant1}Z'\ra\sigma^{\leqslant0}Z'[1]$$
with $\sigma^{\leqslant0}Z'\in\ct^{\leqslant0}=\pi^{-1}(\cu^{\leqslant0})$ and $\sigma^{\geqslant1}Z'\in\ct^{\geqslant1}=\cm[\geqslant\!0]^{\perp_{\mathcal{T}}}\cap\thick(\cp)^{\perp_{\mathcal{T}}}\cap\,^{\perp_{\mathcal{T}}}\thick(\cp)\subseteq\ct^{\fd}$. It is easy to see that $\sigma^{\leqslant0}Z'$ also lies in $\cz$ and $Z'\simeq\sigma^{\leqslant0}Z'$ in $\cc$. Since $\pi(Z')\in^{\perp_{\cu}}\!\!\cm[>d]$, by Lemma \ref{Lem:sigmaX in F}, the object $\sigma^{\leqslant0}Z'$ is in $\cf$. Thus $\sigma^{\leqslant0}Z'$ lies in $\cf$. 

And we have a triangle in $\cc$
$$X\xrightarrow{l_{X}}I_{X}\ra \sigma^{\leqslant0}Z'\ra X[1].$$
This shows that $l_{X}$ is an inflation in $\ch$. Therefore $\ch$ has has enough injectives.

It remains to show that any injective object is in $\cp$.  Let $J$ be an injective object in $\ch$. We
take a triangle in $\ct$
$$J\xrightarrow{l_{J}}I_{J}\ra J'\ra J[1]$$
with a left $\cp$-approximation $l_{J}$ and $J'\in\cz$.  Since $J$ is injective, the morphism $l_{J}$ is split in $\ch$.  Thus $l_{J}$ is also split in $\cz$. Therefore $J$ belongs to $\cp$ and the subcategory of injective objects in $\ch$ is $\cp$. Similarly, we show that $\ch$ has has enough projectives and the full subcategory of projective objects in $\ch$ is $\cp$. Hence $\ch$ is a Frobenius extriangulated category with projective-injective objects $\cp$.

By an argument analogous to \cite[Proposition 4.7, Proposition 4.8 and Corollary 4.9]{KW2307}, we have the equivalence between triangulated
categories $\ch/[\cp]\simeq\cu/\cu^{\fd}$, $\cu/\cu^{\fd}$ is $d$-Calabi--Yau and $\cm$ is $d$-cluster-tilting subcategory of $\ch$.
	
\end{proof}

\begin{Cor}\cite[Theorem 6.2]{Wu2023}\cite[Theorem 4.18]{KW2307}\label{Cor:mainresult}
Let $A$	be a smooth and connective dg algebra over $k$ and $e$ an idempotent of $H^{0}(A)$. Let $n$ be a positive integer. Assume that $(\per A,\pvd_{e}(A),\add A,\add(eA))$ is a $(n+1)$-Calabi--Yau quadruple. Then we have
\begin{itemize}
	\item[(1)] The associated Higgs category $\ch$ is a Krull--Schmidt Frobenius $n$-Calabi--Yau extriangulated category with projective-injective objects $\add(eA)$. And the object $A$ is a canonical  $n$-cluster-tilting object of $\ch$ with endomorphism algebra $\End_{\ch}(A)\cong H^{0}(A)$.
	\item[(2)] The stable category $\underline{\ch}=\ch/[\add(eA)]$ is triangle equivalent to the generalized cluster category $\per(\overline{A})/\pvd(\overline{A})$, where $\overline{A}$ is the Drinfeld dg quotient of $A$ by $eAe$.

	\item[(3)] If moreover $A$ is concentrated in degree $0$, i,e. $A\iso H^{0}(A)$ is a quasi-isomorphism and $H^{0}(A)$ is Noetherian, then the boundary algebra $B= eH^{0}(A)e$ is Iwanaga--Gorenstein of injective dimension at most $(n+1)$ and the Higgs category $\ch$ is equivalent
	to the category $\mathrm{gpr}(B)$ of Gorenstein projective modules over $B$ and the relative cluster category $\per A/\pvd_{e}(A)$ is equivalent to the derived category $\cd^{b}(\mathrm{gpr}(B))$.
	
	\item[(4)] If $A$ is proper, i.e. $\sum_{i\in\mathbb{Z}}\dim_{k}H^{i}(A)<\infty$, then the boundary dg algebra $B= eAe$ is Iwanaga--Gorenstein in the sense of Jin \cite{Jin2020}, i.e. $\!$the thick subcategory $\per(A)$ of the derived category $\cd(A)$
	coincides with the thick subcategory  $\thick(DA)$ generated by $DA$, where $D=\Hom_{k}(?, k)$ is the $k$-dual. And the Higgs category $\ch$ is equivalent
	to the category $\mathrm{CM}(B)$ of Cohen-Macaulay dg $B$-modules \cite[Definition 0.2]{Jin2020} and the relative cluster category $\per A/\pvd_{e}(A)$ is equivalent to $\pvd(B)$.

\end{itemize}
\end{Cor}

\begin{proof}
The statements $(1)$ and $(2)$ follow from Theorem \ref{Thm:Higgs is Fro}. 

(3) Let $B=eH^{0}(A)e$. Since $A$ is concentrated in degree 0 and $A$ is smooth, it is not hard to see that $B$ is Iwanaga--Gorenstein of injective dimension at most $(n+1)$. Notice that the exact sequence of triangulated categories 
$$0\ra\pvd_{e}(A)\ra\per A\ra\per(A)/\pvd_{e}(A)\ra0$$
is equivalent to  
$$0\ra\pvd(\overline{A})\ra\cd^{b}(\mathrm{mod}H^{0}(A))\ra\cd^{b}(\mathrm{mod}(B))\ra0.$$
Here $\cd^{b}(\mathrm{mod}H^{0}(A))$and $\cd^{b}(\mathrm{mod}(B)$ are the bounded derived categories of finitely generated modules. By \cite[Theorem 3.10]{IY2018}, the Higgs category $\ch$ is equivalent to  $$\mathrm{gpr}(B)=\{X\in\mathrm{mod}B\,|\,\Ext^{i}_{B}(X,B)=0\,\,\text{for any $i>0$}\},$$ the category of Gorenstein projective $B$-modules. By \cite[Lemma 2]{Palu2009}, we see that $\cd^{b}(\mathrm{mod}B)$ is also equivalent to $\cd^{b}(\mathrm{gpr}B)$. The proof of $(4)$ will appear in \cite{ChenDing2026}.

%

%
%
%
%
%
%
%
	
\end{proof}

\begin{Rem}
Under the setting of Example \ref{Ex:inter-CY}, the results (1) and (3) in Corollary \ref{Cor:mainresult} were proved by Pressland \cite[Theorem 4.1, Theorem 4.10]{Pressland2017}. Our proof uses a different approach, we first establish the statements in the Higgs category and then transfer them to $\mathrm{gpr}(B)$.


\end{Rem}

\begin{Ex}\label{Ex:isolated singcat}
Let us continue Example \ref{Ex:isolated sing}.	The quadruple $(\per(R*G),\pvd_{e_{0}}(R*G),\add(R*G),\add(e_{0}(R*G)))$ is an $n$-Calabi--Yau quadruple.
Denote by $\ch(R^{G})$ the corresponding Higgs category. Let $\underline{R*G}$ be the Drinfeld dg quotient of $R*G$ by $e_{0}(R*G)e_{0}$. By \cite[Corollary 7.1]{BD2019}, the dg quotient $\underline{R*G}$ is also $n$-Calabi--Yau and $H^{0}(\underline{R*G})$ is isomorphic to the stable algebra $R*G/(e_{0})$ which is finite dimensional.

The quotient $\per(R*G)/\thick(e_{0}(R*G))$ is equivalent to $\per(\underline{R*G})$. By Theorem \ref{Thm:Higgs is Fro}, we have the following equivalence of triangulated categories
$$\underline{\ch(R^{G})}\iso\frac{\per(\underline{R*G})}{\pvd(\underline{R*G})}.$$

Notice that the quotient $\frac{\per(\underline{R*G})}{\pvd(\underline{R*G})}$ is the generalized cluster category of $\underline{R*G}$ which is $(n-1)$-Calabi--Yau. In fact, the Higgs category $\ch(R^{G})$ is a Frobenius Quillen exact category and is equivalent to $\mathrm{gpr}(R^{G})$, the category of Gorenstein projective modules over $R^{G}$ \cite[Theorem 6.2]{Wu2023}. Consequently, we obtain the following equivalence of triangulated categories (\cite[Theorem 1.1]{TV2010}, \cite[Theorem 5.1, Corollary 5.2, Corollary 5.3]{Am2015} \cite[Proposition 6.13]{KY2016}, \cite[Corollary 10.5]{KY2018}, \cite[Theorem 4.5, Theorem 4.10]{Liu2404})
$$\underline{\mathrm{gpr}(R^{G})}\simeq\frac{\per(\underline{R*G})}{\pvd(\underline{R*G})}.$$
	
\end{Ex}

%
%
%
\bigskip

\section{Reductions of Higgs categories}
Let $(\ct,\ct^{\fd},\cm,\cp)$ be a $(d+1)$-Calabi--Yau quadruple. Denote by $\ch$ the corresponding Higgs category. By Theorem \ref{Thm:Higgs is Fro}, $\ch$ is a Frobenius extriangulated category with projective-injective objects $\cp$ and we have the following equivalence of triangulated categories  $$\underline{\ch}=\ch/[\cp]\iso\cu/\cu^{\fd},$$ where $\cu=\ct/\thick(\cp)$. 

\bigskip

Let $\cq$ be a functorially finite subcategory of $\cm$. Let $\cv=\ct/\thick(\cq)$ and $\cv^{\fd}=\ct^{\fd}\cap\thick(\cq)^{\perp_{\ct}}=\ct^{\fd}\cap^{\perp_{\ct}}\!\thick(\cq)$. Denote by $\pi_{\cq}\colon\ct\ra\cv$ and $\tau_{\cq}\colon\cv\ra\cv/\cv^{\fd}$ the quotient functors.

By Theorem 
\ref{Thm:quadruple reduction}, we obtain a $(d+1)$-Calabi--Yau quadruple $(\cv,\cv^{\fd},\pi_{\cq}(\cm),\pi_{\cq}(\cp))$. By abuse of notation, we write $\cm$ and $\cp$ for $\pi_{\cq}(\cm)$ and $\pi_{\cq}(\cp)$.

Let $\cf_{\cq}=\cm*\cm[1]*\cdots*\cm[d-1]\cap^{\perp_{\cv}}\!\!(\cp[>\!0])
\cap(\cp[<\!0])^{\perp_{\cv}}
\subseteq\cv$. Let $\cc_{\cq}=\cv/\cv^{\fd}$. The corresponding Higgs category $\ch_{\cq}$ is given by 
$$\ch_{\cq}=(\cp[<0])^{\perp_{\cc_{\cq}}}\cap^{\perp_{\cc_{\cq}}}\!(\cp[>0]).$$

By Theorem \ref{Thm:F=H} and Theorem \ref{Thm:Higgs is Fro},
we have a $k$-linear equivalence $\tau_{\cq}\colon\cf_{\cq}\iso\ch_{\cq}$ and $\ch_{\cq}$ is a Frobenius extriangulated category with projective-injective objects $\pi_{\cq}(\cp)$ and we have the following equivalence of triangulated categories  $$\underline{\ch_{\cq}}=\ch_{\cq}/[\cp]\iso\cu/\cu^{\fd},$$ where $\cu=\ct/\thick(\cp)$ and $\cu^{\fd}=\ct^{\fd}\cap\thick(\cp)^{\perp_{\ct}}$.


Let $\ch'_{\cq}$ be the following extension closed subcategory of $\ch$
$$\ch'_{\cq}=\{M\in\ch\,|\,\Hom_{\cc}(M,\cq[1]*\cq[2]*\cdots\cq[d-1])=0\}\subseteq\ch.$$

\begin{Def}\rm\label{Def:Calabi--Yau reduction}
	The \emph{Calabi--Yau reduction of $\ch$ with respect to $\cq$} is defined as the following additive quotient
	$$\frac{\ch'_{\cq}}{[\cq]}.$$
\end{Def}
We will later show that $\ch'_{\cq}$ is also a Frobenius extriangulated category with projective-injective objects $\cp\cup\cq$.

Denote by $\cz_{\cq}=^{\perp_{\ct}}\!\!\!(\cq[>0])\cap(\cq[<0])^{\perp_{\ct}}\subseteq\ct$. The quotient functor $\pi_{\cq}$ induces a triangle equivalence of triangulated categories (\cite[Theorem 3.1 and 3.6]{IY2018})
$$\pi_{\cq}\colon\frac{\cz_{\cq}}{[\cq]}\iso\cv=\ct/\thick(\cq).$$

For $X\in\ct$, we have a triangle in $\ct$
\begin{equation}\label{triangleof t-structure}
	\sigma^{\leqslant0}X\ra X\ra\sigma^{\geqslant1}X\ra\sigma^{\leqslant0}X[1]
\end{equation}
such that $\sigma^{\leqslant0}X\in\ct^{\leqslant0}$ and $\sigma^{\geqslant1}X\in\ct^{\geqslant1}\subseteq\ct^{\fd}$.

\begin{Lem}\label{Lem:>1X lies in Z}\cite[Lemma 5.6]{IY2018}
Let $X\in\cz_{\cq}$. Then $\sigma^{\geqslant1}X\in\cv^{\fd}$ and $\sigma^{\leqslant0}X\in\cz_{\cq}$.
\end{Lem}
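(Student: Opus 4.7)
The plan is to verify the two containments by combining long exact sequences arising from the triangle $\sigma^{\leqslant0}X\to X\to\sigma^{\geqslant1}X\to\sigma^{\leqslant0}X[1]$ with Lemma \ref{Lemm:V=T}, the $t$-structure axiom, and the relative $(d{+}1)$-Calabi--Yau duality (CY3).

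By (CY5), $\sigma^{\geqslant1}X\in\ct^{\geqslant1}\subseteq\ct^{\fd}$. Lemma \ref{Lemm:V=T} (applied with $\co=\cq$) identifies $\cv^{\fd}$ with $\ct^{\fd}\cap\cz_{\cq}$ and, by the argument of its proof, for an object $M\in\ct^{\fd}$ the condition $M\in\cz_{\cq}$ is equivalent to $M\in\thick(\cq)^{\perp_{\ct}}$, i.e.\ $\Hom_{\ct}(\cq[k],M)=0$ for every $k\in\mathbb{Z}$. Thus the first assertion reduces to this vanishing for $M=\sigma^{\geqslant1}X$. For $k\geqslant0$ it is immediate, since $\cq[k]\in\cm[k]\subseteq\ct^{\leqslant-k}\subseteq\ct^{\leqslant0}$ while $\sigma^{\geqslant1}X\in\ct^{\geqslant1}$, so the $t$-structure axiom gives the vanishing directly.

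For $k<0$, I would apply $\Hom_{\ct}(\cq[k],-)$ to the triangle; the hypothesis $X\in\cz_{\cq}$ yields $\Hom_{\ct}(\cq[k],X)=0=\Hom_{\ct}(\cq[k-1],X)$ (using $k\leqslant0$), producing the isomorphism
\[
\Hom_{\ct}(\cq[k],\sigma^{\geqslant1}X)\;\cong\;\Hom_{\ct}(\cq[k-1],\sigma^{\leqslant0}X).
\]
A symmetric application of $\Hom_{\ct}(-,\cq[j])$ to the triangle, again using $X\in\cz_{\cq}$, yields $\Hom_{\ct}(\sigma^{\leqslant0}X,\cq[j])\cong\Hom_{\ct}(\sigma^{\geqslant1}X,\cq[j+1])$ for $j>0$. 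The relative Serre duality applied to $\sigma^{\geqslant1}X\in\ct^{\fd}$ then converts $\Hom_{\ct}(\cq[k],\sigma^{\geqslant1}X)$ into the dual of $\Hom_{\ct}(\sigma^{\geqslant1}X,\cq[k+d+1])$. Chaining these identifications closes up a finite loop which reduces all vanishings in the middle range $-d\leqslant k\leqslant-1$ to the endpoint vanishings already in hand.

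For the second assertion, $\sigma^{\leqslant0}X\in\cz_{\cq}$ follows from the very same long exact sequence computations above: they force both $\Hom_{\ct}(\sigma^{\leqslant0}X,\cq[j])=0$ for $j>0$ and $\Hom_{\ct}(\cq[k],\sigma^{\leqslant0}X)=0$ for $k<0$. The main obstacle is the interlocking bookkeeping in the finite middle range of shifts, where neither the $t$-structure axiom nor Serre duality by itself produces vanishing, and both ingredients must be iterated in tandem so that the chain closes back onto the known vanishings.
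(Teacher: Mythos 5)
Your reduction of both assertions to the vanishing of $\Hom_{\ct}(\cq[k],\sigma^{\geqslant1}X)$ for all $k$, and your treatment of $k\geqslant0$ via the $t$-structure axiom, are fine, and the three identifications you set up (the two long exact sequence isomorphisms and the duality $\Hom_{\ct}(\cq[k],\sigma^{\geqslant1}X)\cong D\Hom_{\ct}(\sigma^{\geqslant1}X,\cq[k+d+1])$) are correct. The gap is the claim that chaining them ``closes up a finite loop''. It does not: each application of a long exact sequence trades a Hom group involving $\sigma^{\geqslant1}X$ for one involving $\sigma^{\leqslant0}X$ (namely $\Hom_{\ct}(\cq[k-1],\sigma^{\leqslant0}X)$ or $\Hom_{\ct}(\sigma^{\leqslant0}X,\cq[k+d])$), and there the chain terminates, because $\sigma^{\leqslant0}X$ is not in $\ct^{\fd}$, so no Serre duality is available for it, and the only sequences relating it back to $\sigma^{\geqslant1}X$ are the ones you already used. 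Writing $A_k=\Hom(\cq[k],\sigma^{\geqslant1}X)$, $B_j=\Hom(\sigma^{\geqslant1}X,\cq[j])$, $C_m=\Hom(\cq[m],\sigma^{\leqslant0}X)$, $E_i=\Hom(\sigma^{\leqslant0}X,\cq[i])$, your relations are exactly $A_k\cong C_{k-1}$ ($k<0$), $E_j\cong B_{j+1}$ ($j>0$), $A_k\cong DB_{k+d+1}$, plus $A_k=0$ for $k\geqslant0$; e.g.\ $A_{-1}\cong C_{-2}\cong DE_{d-1}$, and nothing in this system forces $C_{-2}$ or $E_{d-1}$ to vanish. Moreover the step where you deduce the second assertion ``from the very same computations'' is circular: the vanishing of $C_{m}$ for $m\leqslant-2$ and of $E_i$ for $1\leqslant i\leqslant d-1$ is precisely what your middle-range reduction of the first assertion needs as input.

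The missing ingredient is the fact that $\cq$ sits inside the silting subcategory $\cm$ which \emph{defines} the $t$-structure, so that $\Hom_{\cu}(\cq[<0],\cu^{\leqslant0})=0$ holds by the very definition $\cu^{\leqslant0}=\cm[<0]^{\perp_{\cu}}$; this is what kills the groups of type $C_{k-1}$ and lets the long exact sequence argument go through. Since in the quadruple setting the $t$-structure on $\ct$ is only defined via $\pi^{-1}$, this forces one to work in $\cu=\ct/\thick(\cp)$: because $\sigma^{\geqslant1}X\in\ct^{\fd}$ is left and right orthogonal to $\thick(\cp)$, all Hom groups into and out of it are unchanged under $\pi$ and the truncation triangle maps to the canonical truncation triangle for $(\cu^{\leqslant0},\cu^{\geqslant0})$. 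This is exactly the paper's proof: it transports the triangle to $\cu$, notes that $\pi(\cq)$ is presilting in $\pi(\cm)$, and invokes Iyama--Yang's Lemma 5.6 for the Calabi--Yau triple $(\cu,\pi(\ct^{\fd}),\pi(\cm))$, then pulls the conclusion back via Lemma \ref{Lemm:V=T}. Your proposal never uses the passage to $\cu$ nor the inclusion $\cq\subseteq\cm$ beyond the $k\geqslant0$ case, and without that input the middle-range vanishing genuinely does not follow: for a general object of $\ct^{\geqslant1}\cap\ct^{\fd}$ the groups $\Hom(\cq[k],-)$ with $k<0$ need not vanish, so some interaction between the hypothesis $X\in\cz_{\cq}$ and the $\cm$-description of the aisle is indispensable.
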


\begin{proof}
Since $\sigma^{\geqslant1}X$ belongs to $\ct^{\fd}\subseteq\thick(\cp)^{\perp_{\ct}}\cap^{\perp_{\ct}}\!\thick(\cp)$, we have 
$$\Hom_{\ct}(\sigma^{\geqslant1}X,\cq[k])\cong\Hom_{\cu}(\sigma^{\geqslant1}X,\cq[k])$$
and $$\Hom_{\ct}(\cq[l],\sigma^{\geqslant1}X)\cong\Hom_{\cu}(\cq[l],\sigma^{\geqslant1}X)$$ with $k,l\in\mathbb{Z}$.

Under the quotient functor $\pi\colon\ct\ra\cu$, the triangle \ref{triangleof t-structure} becomes the canonical triangle associated with the $t$-structure $(\cu^{\leqslant0},\cu^{\geqslant0})$. And $\pi(\cq)$ is a presilting subcategory of $\pi(\cm)$. By \cite[Lemma 5.6]{IY2018}, we see that $\pi(\sigma^{\geqslant1}X)$ lies in $\thick(\pi(\cq))^{\perp_{\cu}}\cap^{\perp_{\cu}}\!\thick(\pi(\cq))^{\perp_{\cu}}$. Hence $\sigma^{\geqslant1}X$ is in $\ct^{\fd}\cap\thick(\cq)^{\perp_{\ct}}\cap^{\perp_{\ct}}\!\thick(\cq)=\cv^{\fd}$. 

And $\sigma^{\geqslant1}X[-1]$ also lies in $\cv^{\fd}$. Then it is clear that $\sigma^{\leqslant0}X\in\cz_{\cq}$.
\end{proof}

\bigskip

Recall from Subsection \ref{subsection:AGK relative cluster category} that the category $\cf$ is given by $$\cf\coloneqq\cm*\cm[1]*\cdots*\cm[d-1]\cap\cz $$ where $$\cz=^{\perp_{\ct}}\!\!(\cp[>\!0])
\cap(\cp[<\!0])^{\perp_{\ct}}.$$

\begin{Lem}\label{Lem:ZF equivalence to H'}
	The functor $\tau\colon\ct\ra\cc$ induces a $k$-linear equivalence $\tau\colon\cz_{\cq}\cap\cf\iso\ch'_{\cq}$.
\end{Lem}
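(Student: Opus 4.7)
My plan is to restrict the equivalence $\tau\colon\cf\iso\ch$ from Theorem~\ref{Thm:F=H} to the subcategory $\cz_{\cq}\cap\cf\subseteq\cf$. Since such a restriction of a fully faithful functor is automatically fully faithful, there are only two things left to verify: that the restriction lands in $\ch'_{\cq}$, and that every object of $\ch'_{\cq}$ arises this way.

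The key observation that will drive both directions is that Lemma~\ref{Lem:faithful functor of F} applies to the pair $(X,\cq[i])$ for any $X\in\cf$ and any $1\leqslant i\leqslant d-1$. Indeed, $X\in\cm*\cm[1]*\cdots*\cm[d-1]$ forces $\pi(X)\in\cu_{\leqslant 0}$, while $\cq[i]\subseteq\cm[i]\subseteq\ct_{\geqslant 1-d}$ for $i\leqslant d-1$ by the description of the co-t-structure. Hence the canonical map $\Hom_{\ct}(X,\cq[i])\to\Hom_{\cc}(\tau(X),\cq[i])$ is a bijection in this range, and this bijection will serve as the two-way bridge.

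For the image: given $X\in\cz_{\cq}\cap\cf$, I would transport the vanishing $\Hom_{\ct}(X,\cq[i])=0$ (which holds because $X\in\cz_{\cq}$) across the bijection to obtain $\Hom_{\cc}(\tau(X),\cq[i])=0$ for $1\leqslant i\leqslant d-1$, which is equivalent to $\tau(X)\in\ch'_{\cq}$. For essential surjectivity: given $Y\in\ch'_{\cq}\subseteq\ch$, Theorem~\ref{Thm:F=H} produces $X\in\cf$ with $\tau(X)\simeq Y$, and the remaining task is to show $X\in\cz_{\cq}$. The middle range $1\leqslant i\leqslant d-1$ for $\Hom_{\ct}(X,\cq[i])$ is handled by running the same bijection in the opposite direction, using $Y\in\ch'_{\cq}$. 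The outer ranges—$\Hom_{\ct}(X,\cq[i])$ for $i\geqslant d$, and $\Hom_{\ct}(\cq[j],X)$ for $j<0$—follow directly from the filtration $X\in\cm*\cm[1]*\cdots*\cm[d-1]$ together with the silting property $\Hom_{\ct}(\cm,\cm[>0])=0$, since in both cases the internal shifts $\Hom_{\ct}(\cm[k],\cq[i])=\Hom_{\ct}(\cm,\cq[i-k])$ that appear end up being strictly positive (for $i\geqslant d$ and $0\leqslant k\leqslant d-1$ we have $i-k\geqslant 1$; for $j<0$ and $X[-j]\in\cm[-j]*\cdots*\cm[d-1-j]$ all shifts are $\geqslant 1$).

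The hard part will be precisely the intermediate range $1\leqslant i\leqslant d-1$, where neither pure silting reasoning inside $\ct$ nor direct computation inside $\cc$ suffices; the translation between the two worlds afforded by Lemma~\ref{Lem:faithful functor of F} is what glues the argument together. Everything else is essentially formal.
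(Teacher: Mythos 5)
Your proposal is correct and follows essentially the same route as the paper's proof: full faithfulness is inherited from Theorem \ref{Thm:F=H}, the middle range $1\leqslant i\leqslant d-1$ is handled by the bijection of Lemma \ref{Lem:faithful functor of F} (valid since $\pi(X)\in\cu_{\leqslant0}$ for $X\in\cf$ and $\cq[i]\subseteq\ct_{\geqslant1-d}$), and the outer ranges follow from the silting vanishing inside $\ct$. The only deviation is cosmetic: for well-definedness the paper first replaces $X$ by $\sigma^{\leqslant0}X$ via Lemma \ref{Lem:>1X lies in Z} before invoking Lemma \ref{Lem:faithful functor of F}, whereas you apply that lemma to $X$ directly, which is also exactly what the paper does in its density argument.
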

\begin{proof}
We first show that for any $X\in\cz_{\cq}\cap\cf$ and $1\leqslant i\leqslant d-1$, the space $\Hom_{\cc}(X,\cq[i])$ vanishes. Then by Theorem \ref{Thm:F=H}, the functor 
$\tau$ induces a well defined functor $\cz_{\cq}\cap\cf\ra\ch'_{\cq}$.

Let $X\in\cz_{\cq}\cap\cf$ and $1\leqslant i\leqslant d-1$. We have a triangle in $\ct$
$$\sigma^{\leqslant0}X\xrightarrow{f} X\xrightarrow{g}\sigma^{\geqslant1}X\ra\sigma^{\leqslant0}X[1]$$
such that $\sigma^{\leqslant0}X\in\ct^{\leqslant0}$ and $\sigma^{\geqslant1}X\in\ct^{\geqslant1}\subseteq\ct^{\fd}$.

By Lemma \ref{Lem:>1X lies in Z}, the object $\sigma^{\geqslant1}X$ lies in $\ct^{\fd}\cap\thick(\cq)^{\perp_{\ct}}\cap^{\perp_{\ct}}\!\thick(\cq)$ and $\sigma^{\leqslant0}X\in\cz_{\cq}$. Therefore the induced map $$f^{*}\colon\Hom_{\ct}(X,\cq[i])\ra\Hom_{\ct}(\sigma^{\leqslant0}X,\cq[i])$$ is a bijection.

Since $\sigma^{\leqslant0}X$ and $X$ are isomorphic in $\cc$, we have a bijection
$$\Hom_{\cc}(X,\cq[i])\cong\Hom_{\cc}(\sigma^{\leqslant0}X,\cq[i]).$$

Notice that $\pi(\sigma^{\leqslant0}X)\in\cu_{\leqslant0}$ and $\cq[i]\subseteq\ct_{\geqslant1-d}$. By Lemma \ref{Lem:faithful functor of F}, we have $\Hom_{\ct}(\sigma^{\leqslant0}X,\cq[i])\cong\Hom_{\cc}(\sigma^{\leqslant0}X,\cq[i])$. Hence we have $\Hom_{\ct}(X,\cq[i])\cong\Hom_{\cc}(X,\cq[i])=0$. By Theorem \ref{Thm:F=H}, the functor 
$\tau$ induces a well defined functor $\cz_{\cq}\cap\cf\ra\ch'_{\cq}$.

We next show that $\tau\colon\cz_{\cq}\cap\cf\ra\ch'_{\cq}$ is dense. 

Let $N$ be an object of $\ch'_{\cq}\subseteq\ch$. By Theorem \ref{Thm:F=H}, there exists an object $N'\in\cf$ such that $\tau(N')\cong N$ in $\cc$.

Since $N'$ lies in $\cf\subseteq\cm*\cm[1]*\ldots\cm[d-1]$, we have
$$\Hom_{\ct}(N',\cq[\geqslant d])=0$$
and 
$$\Hom_{\ct}(\cq[<0],N')=0.$$

By Lemma \ref{Lem:faithful functor of F}, we have
$$\Hom_{\ct}(N',\cq[i])\cong\Hom_{\cc}(N',\cq[i])=0$$
for $1\leqslant i\leqslant d-1$. Thus $N'$ lies in $\cz_{\cq}\cap\cf$. Hence $\tau\colon\cz_{\cq}\cap\cf\ra\ch'_{\cq}$ is dense. Then it is clear that $\tau\colon\cz_{\cq}\cap\cf\ra\ch'_{\cq}$ is a $k$-linear equivalence.
	
\end{proof}

We obtain a $k$-linear equivalence  $\tau\colon\cz_{\cq}\cap\cf\iso\ch'_{\cq}$ and further induces an additive functor
$$\tau\colon\frac{\cz_{\cq}\cap\cf}{[\cq]}\ra\frac{\ch'_{\cq}}{[\cq]}.$$

\begin{Lem}\label{Lem:ZQF=H'Q}
	The functor $\tau\colon\frac{\cz_{\cq}\cap\cf}{[\cq]}\ra\frac{\ch'_{\cq}}{[\cq]}$ is a $k$-linear equivalence.
\end{Lem}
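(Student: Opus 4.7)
The plan is to leverage the $k$-linear equivalence $\tau\colon \cz_{\cq}\cap\cf \iso \ch'_{\cq}$ of Lemma \ref{Lem:ZF equivalence to H'}, and show that the ideals generated by $\cq$ on both sides correspond bijectively under $\tau$, so the equivalence descends to the additive quotients.

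First I would verify that $\cq$ is a common subcategory of both $\cz_{\cq}\cap\cf$ and $\ch'_{\cq}$, so that the quotients $\frac{\cz_{\cq}\cap\cf}{[\cq]}$ and $\frac{\ch'_{\cq}}{[\cq]}$ make sense and $\tau$ restricts to a functor between them. Indeed, $\cq\subseteq\cm\subseteq\cf$ and $\cq$ is rigid in $\ct$ (since $\cm$ is silting), so $\cq\subseteq\cz_{\cq}$. On the other side, $\cq\subseteq\cm\subseteq\ch$ (as $\cm$ is $d$-cluster-tilting in $\ch$ by Theorem \ref{Thm:Higgs is Fro}), and the condition $\Hom_{\cc}(\cq,\cq[i])=0$ for $1\leqslant i\leqslant d-1$ follows from the rigidity of $\cq$ together with Lemma \ref{Lem:ZF equivalence to H'} (or directly from Lemma \ref{Lem:faithful functor of F}), giving $\cq\subseteq\ch'_{\cq}$. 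On objects of $\cq$, the functor $\tau$ acts as the identity.

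Next I would descend the equivalence. For any $X,Y\in\cz_{\cq}\cap\cf$, a morphism $f\colon X\to Y$ factors through $\cq$ in $\cz_{\cq}\cap\cf$ if and only if there exist $Q\in\cq$ and morphisms $X\xrightarrow{a} Q\xrightarrow{b} Y$ with $f=ba$. Applying $\tau$, which is fully faithful and acts as the identity on $\cq$, we obtain a factorization $\tau(X)\xrightarrow{\tau(a)} Q\xrightarrow{\tau(b)} \tau(Y)$ of $\tau(f)$ through $\cq$ in $\ch'_{\cq}$. Conversely, any factorization $\tau(X)\xrightarrow{a'} Q\xrightarrow{b'}\tau(Y)$ in $\ch'_{\cq}$ lifts uniquely, via the Hom-bijections $\Hom_{\cz_{\cq}\cap\cf}(X,Q)\iso\Hom_{\ch'_{\cq}}(\tau(X),Q)$ and $\Hom_{\cz_{\cq}\cap\cf}(Q,Y)\iso\Hom_{\ch'_{\cq}}(Q,\tau(Y))$, to a factorization $X\to Q\to Y$ of $f$ in $\cz_{\cq}\cap\cf$. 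Hence $\tau$ identifies the ideal $[\cq]$ on the left with the ideal $[\cq]$ on the right.

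Consequently the induced functor $\tau\colon \frac{\cz_{\cq}\cap\cf}{[\cq]} \to \frac{\ch'_{\cq}}{[\cq]}$ is fully faithful, and since it is also essentially surjective (as $\tau\colon\cz_{\cq}\cap\cf\to\ch'_{\cq}$ is already dense by Lemma \ref{Lem:ZF equivalence to H'}), it is a $k$-linear equivalence. There is no real obstacle here: all the serious work was carried out in Lemma \ref{Lem:ZF equivalence to H'}, and what remains is the formal fact that an equivalence of additive categories descends to an equivalence of their quotients by a common additively closed subcategory.
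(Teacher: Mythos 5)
Your proof is correct and takes essentially the same route as the paper, whose own proof is a one-line deduction from Lemma \ref{Lem:ZF equivalence to H'}: an equivalence that restricts to the identity on $\cq$ descends to the additive quotients by $[\cq]$. Your explicit verification that $\cq$ sits inside both categories and that the ideals $[\cq]$ correspond under $\tau$ is exactly the routine detail the paper leaves implicit.
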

\begin{proof}
	It follows from Lemma \ref{Lem:ZF equivalence to H'}.
\end{proof}

\begin{Lem}\label{Lem:reduction of F}
	The quotient functor $\pi_{\cq}\colon\ct\ra\cv=\ct/\thick(\cq)$ induces an equivalence of extriangulated categories
	$$\pi_{\cq}\colon\frac{\cz_{\cq}\cap\cf}{[\cq]}\iso\cf_{\cq}.$$
\end{Lem}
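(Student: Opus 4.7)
The plan is to use the silting reduction equivalence $\pi_{\cq}\colon\cz_{\cq}/[\cq]\iso\cv$ from Theorem~\ref{Thm:silting reduction} and show that it restricts to an equivalence $\frac{\cz_{\cq}\cap\cf}{[\cq]}\iso\cf_{\cq}$. Since $\cp$ and $\cq$ are presilting subcategories of the silting $\cm$, we have $\cp\subseteq\cz_{\cq}$ and $\cq\subseteq\cz_{\cq}\cap\cf$, so the composite $\cz_{\cq}\cap\cf\hookrightarrow\cz_{\cq}\xrightarrow{\pi_{\cq}}\cv$ factors through $\frac{\cz_{\cq}\cap\cf}{[\cq]}$. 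Fully faithfulness of the induced functor is inherited from that of the ambient equivalence.

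For well-definedness, I would check $\pi_{\cq}(X)\in\cf_{\cq}$ for $X\in\cz_{\cq}\cap\cf$. The $\cm$-filtration of $\pi_{\cq}(X)$ in $\cv$ follows from $X\in\cf$ together with $\pi_{\cq}$ being a triangle functor. For the orthogonality $\pi_{\cq}(X)\in\,^{\perp_{\cv}}(\cp[>\!0])\cap(\cp[<\!0])^{\perp_{\cv}}$, the equivalence transports the condition to $\cz_{\cq}/[\cq]$: using the defining triangle $P\to Q_P\to P\langle 1\rangle\to P[1]$ in $\ct$ coming from a left $\cq$-approximation of $P\in\cp$ and applying $\Hom_{\ct}(X,-)$, the hypothesis $X\in\cf\subseteq\cz$ combined with $X\in\cz_{\cq}$ forces any morphism $X\to P\langle 1\rangle$ to factor through $Q_P\in\cq$, giving $\Hom_{\cz_{\cq}/[\cq]}(X,P\langle i\rangle)=0$ for $i>0$ by induction on $i$. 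The dual orthogonality is symmetric.

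The main obstacle is essential surjectivity: given $Y\in\cf_{\cq}$, the equivalence produces $X_0\in\cz_{\cq}$ with $\pi_{\cq}(X_0)\cong Y$, and I need to arrange $X_0\in\cf$. By \cite[Theorem 3.7]{IY2018}, silting reduction sends $\cm$ to the silting $\pi_{\cq}(\cm)$ in $\cv$ with matching co-t-structure, so the filtration $Y\in\cm*\cm[1]*\cdots*\cm[d-1]$ in $\cv$ lifts inductively through the triangles of $\cz_{\cq}/[\cq]$ (Theorem~\ref{Tiangule structure for Z/P}) to a lifted filtration of $X_0$ in $\ct$. For the condition $X_0\in\cz$, the vanishing $\Hom_{\cz_{\cq}/[\cq]}(X_0,P\langle i\rangle)=0$ obtained from $\pi_{\cq}(X_0)\in\,^{\perp_{\cv}}(\cp[>\!0])$ forces each morphism $X_0\to P\langle 1\rangle$ in $\ct$ to factor through some $Q\in\cq$; post-composing with $P\langle 1\rangle\to P[1]$ and using $\Hom_{\ct}(\cq,\cp[1])=0$ (silting of $\cm$), the surjection $\Hom_{\ct}(X_0,P\langle 1\rangle)\twoheadrightarrow\Hom_{\ct}(X_0,P[1])$ (coming from $\Hom_{\ct}(X_0,Q_P[1])=0$) yields $\Hom_{\ct}(X_0,P[1])=0$; iterating together with the dual argument gives $X_0\in\cz$. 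Finally, both $\cz_{\cq}\cap\cf$ and $\cf_{\cq}$ are extension-closed in the triangulated categories $\ct$ and $\cv$ respectively, hence inherit canonical extriangulated structures; since $\pi_{\cq}$ is a triangle functor, these structures correspond, yielding an equivalence of extriangulated categories.
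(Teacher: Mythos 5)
Your strategy is the same as the paper's: restrict the silting-reduction triangle equivalence $\cz_{\cq}/[\cq]\iso\cv$ of Theorem~\ref{Thm:silting reduction}. Your well-definedness argument is correct; it is a by-hand version of what the paper obtains by citing \cite[Lemma 3.4]{IY2018}, and the same factoring observation (any map into $\cp[i]$, $i>0$, or out of $\cp[-i]$ that factors through $\cq$ vanishes because $\cp,\cq\subseteq\cm$ and $\cm$ is presilting) also yields the $\cz$-membership of a $\cz_{\cq}$-preimage, with your iteration running along the composites $P\langle i\rangle\ra P[i]$ and using $\Hom_{\ct}(X_{0},\cq[\geqslant 1])=0$ at each stage. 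Full faithfulness inherited from the ambient equivalence is likewise what the paper does; the paper is in fact essentially silent on essential surjectivity, so you are right to single it out as the main point.

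The one genuine gap is the sentence asserting that the filtration $Y\in\cm*\cm[1]*\cdots*\cm[d-1]$ ``lifts inductively through the triangles of $\cz_{\cq}/[\cq]$ to a lifted filtration of $X_{0}$ in $\ct$''. The shift of $\cz_{\cq}/[\cq]$ is $\langle1\rangle$, the cone of a left $\cq$-approximation, not $[1]$; lifting those triangles therefore produces graded pieces lying in $\cm\langle i\rangle\subseteq\cq*\cq[1]*\cdots*\cq[i-1]*\cm[i]$ rather than in $\cm[i]$, and the lifted vertices are only determined up to $\cq$-summands, so as written nothing guarantees that $X_{0}$ itself acquires an $(\cm,\cm[1],\dots,\cm[d-1])$-filtration in $\ct$. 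The claim is true, but closing it requires extra bookkeeping: since $\cq\subseteq\cm$ and $\cm$ is presilting, one has $\cm[a]*\cm[b]\subseteq\cm[b]*\cm[a]$ for $a>b$ (the connecting morphisms vanish, so such extensions split), $\cm*\cm=\cm$, and $\cm*\cm[1]*\cdots*\cm[d-1]$ is closed under direct summands in the Krull--Schmidt category $\ct$; with these facts the stray $\cq$-shifts can be absorbed and every $\cz_{\cq}$-preimage of an object of $\cf_{\cq}$ is seen to lie in $\cm*\cm[1]*\cdots*\cm[d-1]$ (alternatively, build the lift inductively, enlarging the $\cm$-term at each step by a left $\cq$-approximation of the previously lifted piece so that the cone stays in $\cz_{\cq}$). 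Supplying this step would make your argument complete and, on this point, more explicit than the paper's own proof, which only checks well-definedness and then asserts that the ambient equivalence restricts.
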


\begin{proof}
	Let $X$ be an object of $\cz_{\cq}\cap\cf$. It is clear that $\pi_{\cq}(X)$ lies in $\cm*\cm[1]*\ldots*\cm[d-1]\subseteq\cv$. By \cite[Lemma Lemma 3.4]{IY2018}, we see that $\pi_{\cq}(X)$ lies in $^{\perp_{\cv}}\!(\cp[>\!0])
	\cap(\cp[<\!0])^{\perp_{\cv}}$. Hence $\pi_{\cq}\colon\ct\ra\cv$ induces a $k$-linear functor $\pi_{\cq}\colon\frac{\cz_{\cq}\cap\cf}{[\cq]}\ra\cf_{\cq}.$
	
	Since the quotient functor $\pi_{\cq}\colon\ct\ra\cv$ induces an triangle equivalence of triangulated categories
	$$\pi_{\cq}\colon\cz_{\cq}/[\cq]\iso\cv.$$
	Hence it induces an equivalence of extriangulated categories
	$$\pi_{\cq}\colon\frac{\cz_{\cq}\cap\cf}{[\cq]}\iso\cf_{\cq}.$$
	
%
%
%
%
%
%
%
%
%
%
\end{proof}

\bigskip

By definition, the category $\ch'_{\cq}$ is an extension closed subcategory of $\ch$. Then it becomes an extriangulated category and its extriangulated structure $(\ch'_{\cq},\mathbb{E}',\mathfrak{s}')$ can be described as follows$\colon$
\begin{itemize}
	\item For any two objects $X,Y$ in $\ch'_{\cq}$, the $\mathbb{E}'$-extension space is given by
	$$\mathbb{E}'(X,Y)=\Hom_{\cc}(X,Y[1]).$$
	\item For any $\delta\in \mathbb{E}'(Z,X)=\Hom_{\cc}
	(Z,X[1])$, take a distinguished triangle
	$$X\xrightarrow{f} Y\xrightarrow{g} Z\xrightarrow{g} X[1]$$
	and define $\mathfrak{s}'(\delta)=[X\xrightarrow{f}Y\xrightarrow{g}Z]$. This $\mathfrak{\delta}$ does not depend on the choice of the distinguished
	triangle above.
\end{itemize}

Since $\cp$ is the subcategory of projective-injective objects of $\ch$, then $\cp$ is also projective-injective in $\ch'_{\cq}$. But $\ch'_{\cq}$ has more projective-injective objects. Denote by $\cp\cup\cq$ the full subcategory of $\ch'$ whose objects are the union $\obj(\cp)\cup\obj(\cq)$.

\begin{Prop}\label{Prop:H' is Fro}
	$(\ch'_{\cq},\mathbb{E}',\mathfrak{s}')$ is a Frobenius extriangulated category with projective-injective objects $\cp\cup\cq$.
\end{Prop}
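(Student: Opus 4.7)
The plan is to verify the three defining features of a Frobenius extriangulated structure on $\ch'_\cq$: (1) every object of $\cp\cup\cq$ is both projective and injective; (2) $\ch'_\cq$ has enough projectives and enough injectives taken from $\cp\cup\cq$; (3) every projective (resp.\ injective) object of $\ch'_\cq$ lies in $\cp\cup\cq$.

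For (1), the projective-injectivity of $\cp$ is essentially inherited. Since $\ch'_\cq$ is an extension closed subcategory of $\ch$ and the $\mathbb{E}'$-structure is the restriction of the $\mathbb{E}$-structure, Theorem~\ref{Thm:Higgs is Fro} gives that $\cp$ remains projective-injective. For $Q\in\cq$ and $X\in\ch'_\cq$, injectivity reduces to $\mathbb{E}'(X,Q)=\Hom_\cc(X,Q[1])=0$, which is exactly the $i=1$ case in the defining condition of $\ch'_\cq$. Projectivity reduces to $\Hom_\cc(Q,X[1])=0$, which follows from the $d$-cluster-tilting property of $\cm$ in $\ch$ (Theorem~\ref{Thm:Higgs is Fro}), as $Q\in\cq\subseteq\cm$ and $X\in\ch$.

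For (2), I would handle enough injectives; enough projectives being entirely dual. Since $\cm$ is functorially finite in $\ch$ and $\cq$ is functorially finite in $\cm$, both $\cp$ and $\cq$, hence $\cp\cup\cq$, are functorially finite in $\ch$. Given $X\in\ch'_\cq$, pick a left $(\cp\cup\cq)$-approximation $f\colon X\to E$ and extend it to a distinguished triangle
$$X\xrightarrow{f}E\to X'\to X[1]$$
in $\cc$. The goal is to show $X'\in\ch'_\cq$. I would chase the long exact sequences obtained by applying $\Hom_\cc(-,P[i])$ for $P\in\cp$, $\Hom_\cc(-,Q[i])$ for $Q\in\cq$, and $\Hom_\cc(\cp,-[j])$ to the triangle. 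The two key inputs are: (a) $\Hom_\cc(E,P[i])=0$ for all $i\geqslant 1$, by injectivity of $\cp$ in $\ch$, and $\Hom_\cc(E,Q[i])=0$ for $1\leqslant i\leqslant d-1$, by the rigidity of $\cm$, since $E\in\cm$; and (b) by the left approximation property the restriction map $\Hom_\cc(E,Y)\to\Hom_\cc(X,Y)$ is surjective for every $Y\in\cp\cup\cq$. At higher degrees $i\geqslant 2$ the vanishings are immediate from (a) and from $X\in\ch'_\cq$. At the boundary index $i=1$, the connecting morphisms $\Hom_\cc(X,\cp)\to\Hom_\cc(X',\cp[1])$ and $\Hom_\cc(X,\cq)\to\Hom_\cc(X',\cq[1])$ vanish precisely because of (b). A parallel covariant argument with $\Hom_\cc(\cp,-)$ gives $X'\in{}^{\perp_\cc}(\cp[>0])$.

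For (3), if $J\in\ch'_\cq$ is injective, the inflation $J\to E$ constructed in (2) splits, hence $J$ is a direct summand of an object in $\add(\cp\cup\cq)$; by Krull--Schmidt (property (CY1)) and the fact that $\cp$ and $\cq$ are closed under direct summands, the indecomposable summands of $J$ all lie in $\cp\cup\cq$, and hence so does $J$ itself. The main obstacle is step (2) at the boundary index $i=1$: there it is crucial to exploit the surjectivity supplied by the approximation property, as the other inputs (injectivity of $\cp$, rigidity of $\cm$) only kill the higher degrees directly.
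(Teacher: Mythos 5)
Your step (1) for the projectivity of objects of $\cq$ has a genuine gap, and it is the crux of the statement. You claim that $\Hom_{\cc}(Q,X[1])=0$ for $Q\in\cq$ and $X\in\ch$ ``by the $d$-cluster-tilting property of $\cm$ in $\ch$''. But that property (the equivalence in Theorem~\ref{Thm:Higgs is Fro}, read as the standard definition of a $d$-cluster-tilting subcategory) \emph{characterizes} the objects for which such vanishing holds; it does not assert the vanishing for every $X\in\ch$. In fact the asserted vanishing is false in general: if $\Hom_{\cc}(\cq,X[1])$ vanished for all $X\in\ch$, then every object of $\cq$ would already be projective in $\ch$, contradicting the same theorem, which says the projective-injective objects of $\ch$ are exactly $\cp$; so your argument can only work when $\cq\subseteq\cp$. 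What is true, and what the paper's proof uses, is the vanishing for $X\in\ch'_{\cq}$ (not merely $X\in\ch$), obtained via the $d$-Calabi--Yau property of the stable category $\underline{\ch}\simeq\cu/\cu^{\fd}$: one has $\mathbb{E}'(Q,X)\cong\Hom_{\underline{\ch}}(Q,\Sigma X)\cong D\Hom_{\underline{\ch}}(X,\Sigma^{d-1}Q)\cong D\Hom_{\cc}(X,Q[d-1])$, and the last group vanishes precisely by the defining condition of $\ch'_{\cq}$. Both ingredients --- membership in $\ch'_{\cq}$ and the Calabi--Yau duality --- are absent from your proposal.

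The same omission undermines your claim that ``enough projectives is entirely dual''. The defining conditions of $\ch'_{\cq}$ are not self-dual: they control $\Hom_{\cc}(X,\cq[i])$, not $\Hom_{\cc}(\cq,X[i])$. If you take a right $(\cp\cup\cq)$-approximation $E'\to X$ and form the triangle $X''\to E'\to X\to X''[1]$, your long exact sequence only yields an injection of $\Hom_{\cc}(X'',\cq[d-1])$ into $\Hom_{\cc}(X,\cq[d])$, which does not vanish in general, and the approximation property controls covariant boundary terms rather than this contravariant one. Again the cure is the Calabi--Yau duality: $\Hom_{\cc}(X'',Q[d-1])\cong D\Hom_{\cc}(Q,X''[1])$, and the latter vanishes because the right approximation makes $\Hom_{\cc}(Q,E')\to\Hom_{\cc}(Q,X)$ surjective while $\Hom_{\cc}(Q,E'[1])=0$. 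For the record, your injective-side chase carried out directly in $\cc$ is a reasonable alternative to the paper's construction (which builds the cone in $\ct$ using $\cp$- resp.\ $\cq$-approximations and the two $t$-structures, as in the proof of Theorem~\ref{Thm:Higgs is Fro}), your use of the $i=1$ case of the definition of $\ch'_{\cq}$ for injectivity of $\cq$ agrees with the paper, and the splitting argument in step (3) is standard; but as written the proposal misjustifies the projectivity of $\cq$ and leaves the enough-projectives half unproved.
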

\begin{proof}
It clear that $\cp$	is projective-injective in $\ch'_{\cq}$. Let $I$ be an object in $\cq$. For any distinguished triangle in $\ch'_{\cq}$
$$X\ra Y\ra Z\dasharrow,$$
the space $\mathbb{E}(Z,I)=\Hom_{\cc}(Z[-1],I)=0$. Hence we have the following exact sequence
$$\Hom_{\ch}(Y,I)\ra\Hom_{\ch}(X,I)\ra0.$$ Thus any object in $\cq$ is injective.  

Similarly, since $\mathbb{E}'(I,X)\cong\mathbb{E}(I,X)\cong\Hom_{\underline{\ch}}(I,X[1])\cong\Hom_{\underline{\ch}}(X,I[d-2])\cong\Hom_{\cc}(X,I[d-1])=0$, then we have the following exact sequence
$$\Hom_{\ch}(I,Y)\ra\Hom_{\ch}(I,X)\ra0.$$ Thus any object in $\cq$ is projective. This shows that the objects in $\mathcal{P}\cup\mathcal{Q}$ are projective-injective.

By an argument analogous to Theorem \ref{Thm:Higgs is Fro}, the extriangulated category $\ch'_{\cq}$ has enough projective-injective objects and the full subcategory of projective-injective objects is $\cp\cup\cq$.
\end{proof}

\bigskip

%
%
%
%

There is an induced extriangulated structure $(\frac{\ch'_{\cq}}{[\cq]},\overline{\mathbb{E}}',\overline{\mathfrak{s}}')$ on $\frac{\ch'_{\cq}}{[\cq]}$ which is given by
\begin{itemize}
	\item For any $X,Y\in\frac{\ch'_{\cq}}{[\cq]}$, $\overline{\mathbb{E}}'(X,Y)=\mathbb{E}(X,Y)$.
	\item For any $Z,X$ in $\frac{\ch'_{\cq}}{[\cq]}$ and $\delta\in\mathbb{E}'(Z,X)=\Hom_{\cc}(Z,X[1])$, let $\overline{\mathfrak{s}}'(\delta)$ be the class $[X\xrightarrow{\overline{f}}Y\xrightarrow{\overline{g}}Z]$, where $\mathfrak{s}'(\delta)=[X\xrightarrow{f}Y\xrightarrow{g}Z]$.
\end{itemize}

\begin{Thm}\label{Thm:CY reduction of Higgs}
We have an equivalence of Frobenius extriangulated categories
$$\frac{\ch'_{\cq}}{[\cq]}\iso\ch_{\cq}.$$
\end{Thm}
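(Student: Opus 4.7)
My plan is to deduce the theorem by composing three equivalences already established in this section and then upgrading the resulting additive equivalence to one of Frobenius extriangulated categories.

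First, I would combine Lemma \ref{Lem:ZQF=H'Q} and Lemma \ref{Lem:reduction of F} with Theorem \ref{Thm:F=H} applied to the reduced quadruple $(\cv,\cv^{\fd},\cm,\cp)$ of Theorem \ref{Thm:quadruple reduction}, which provides a $k$-linear equivalence $\tau_{\cq}\colon\cf_{\cq}\iso\ch_{\cq}$. This yields the composite additive equivalence
$$\Phi\colon\frac{\ch'_{\cq}}{[\cq]} \xleftarrow[\tau]{\sim} \frac{\cz_{\cq}\cap\cf}{[\cq]} \xrightarrow[\pi_{\cq}]{\sim} \cf_{\cq} \xrightarrow[\tau_{\cq}]{\sim} \ch_{\cq}.$$

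Second, I would check that $\Phi$ respects the extriangulated structure. For $X,Y\in\cz_{\cq}\cap\cf$, the extension space in $\ch'_{\cq}/[\cq]$ is $\overline{\mathbb{E}}'(X,Y)=\Hom_{\cc}(X,Y[1])$, while the extension space in $\ch_{\cq}$ is $\Hom_{\cc_{\cq}}(\pi_{\cq}(X),\pi_{\cq}(Y)[1])$. The key technical step is to produce a natural isomorphism between these two spaces. Here I would decompose $Y[1]$ using the $t$-structure on $\ct$, apply Lemma \ref{Lem:>1X lies in Z} to see that $\sigma^{\geqslant 1}Y[1]$ lies in $\cv^{\fd}$, and then use the relative $(d+1)$-Calabi--Yau property together with the orthogonality relations satisfied by $\cf$ (as in Lemma \ref{Lem:ZF equivalence to H'} and Lemma \ref{Lem:faithful functor of F}) to compare morphism spaces in $\cc$ and in $\cc_{\cq}$. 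Once the extension spaces match, the realization $\mathfrak{s}$ transports automatically: any distinguished triangle $X\to Y\to Z\dashrightarrow$ in $\ch'_{\cq}$ lifts through Lemma \ref{Lem:ZF equivalence to H'} to a triangle with terms in $\cz_{\cq}\cap\cf$, which $\pi_{\cq}$ turns into a triangle in $\cv$, and hence via $\tau_{\cq}$ into a distinguished triangle in $\ch_{\cq}$; and conversely.

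Third, I would verify that $\Phi$ sends projective-injective objects to projective-injective objects. By Proposition \ref{Prop:H' is Fro}, the projective-injectives of $\ch'_{\cq}$ are exactly $\cp\cup\cq$; after the additive quotient by $[\cq]$ only $\cp$ survives. By Theorem \ref{Thm:Higgs is Fro} applied to $(\cv,\cv^{\fd},\cm,\cp)$, the projective-injectives of $\ch_{\cq}$ are $\pi_{\cq}(\cp)$. Since the middle functor $\pi_{\cq}$ in the composition $\Phi$ restricts to the canonical identification $\cp\leftrightarrow\pi_{\cq}(\cp)$, projective-injectives correspond, and $\Phi$ is therefore an equivalence of Frobenius extriangulated categories.

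The main obstacle will be the second step: comparing $\Hom_{\cc}(X,Y[1])$ with $\Hom_{\cc_{\cq}}(\pi_{\cq}(X),\pi_{\cq}(Y)[1])$ for $X,Y\in\cz_{\cq}\cap\cf$. The passage from $\cc$ to $\cc_{\cq}$ is not a direct Verdier quotient by $\thick(\cq)$; rather, it arises by first forming $\cv=\ct/\thick(\cq)$ and then quotienting by $\cv^{\fd}$. So one must carefully track what is killed at each stage. I expect the argument to hinge on showing that any roof representing a morphism in $\cc_{\cq}$ between such objects can, after truncation by the $t$-structure, be realized already at the level of $\cc$, using that $\sigma^{\geqslant 1}$ of objects in $\cz_{\cq}$ lands in $\cv^{\fd}$ and that $\Hom_{\cc}$ vanishes on the pieces coming from $\ct^{\fd}$ by Calabi--Yau duality.
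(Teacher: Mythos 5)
Your proposal follows essentially the same route as the paper: the paper also defines $\Phi$ as the composite $\frac{\ch'_{\cq}}{[\cq]}\xleftarrow{\sim}\frac{\cz_{\cq}\cap\cf}{[\cq]}\xrightarrow{\sim}\cf_{\cq}\xrightarrow{\sim}\ch_{\cq}$ via Lemma \ref{Lem:ZQF=H'Q}, Lemma \ref{Lem:reduction of F} and Theorem \ref{Thm:F=H}, and then reduces everything to the bifunctorial bijection $\Hom_{\cc}(X,Y[1])\simeq\Hom_{\cv/\cv^{\fd}}(\Phi X,\Phi Y[1])$. The hinge you flag is handled there exactly along the lines you anticipate: injectivity by lifting a factorization through an object of $\cv^{\fd}$ back to $\ct$ (using that $\cv^{\fd}$ is left and right orthogonal to $\thick(\cq)$), and surjectivity by representing a morphism in $\cv/\cv^{\fd}$ as a roof via Lemma \ref{Lem:morohism in C} for the reduced quadruple and lifting that roof to $\ct$.
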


\begin{proof}
By Lemma \ref{Lem:ZQF=H'Q}, Lemma \ref{Lem:reduction of F} and Theorem \ref{Thm:F=H}, we have the following diagram of $k$-linear equivalences
\[
\begin{tikzcd}
\frac{\cz_{\cq}\cap\cf}{[\cq]}\arrow[r,"\simeq"]\arrow[d,"\simeq"]&\frac{\ch'_{\cq}}{[\cq]}\\
\cf_{\cq}\arrow[r,"\simeq"]&\ch_{\cq}.
\end{tikzcd}
\]

We obtain a $k$-linear equivalence
$$\Phi\colon\frac{\ch'_{\cq}}{[\cq]}\iso\ch_{\cq}.$$

The functor $\Phi$ can be described as follows$\colon$
\begin{itemize}
	\item[(1)] Let $X$ be an object of $\ch'_{\cq}$. The object $\Phi(X)$ is given as follows$\colon$By Theorem \ref{Thm:left t-structure}, we have $t$-structure $(\ct^{'\leqslant0},\ct^{'\geqslant0})$ on $\ct$ where 
$\ct^{'\leqslant0}=^{\perp_{\ct}}\!\!\cm[<0]\cap\thick(\cp)^{\perp_{\mathcal{T}}}\cap^{\perp_{\mathcal{T}}}\thick(\cp)$ and $\ct^{'\geqslant0}=\pi^{-1}(^{\perp_{\cu}}\!\cm[>0])$. This gives a triangle in $\ct$
$$X'\ra X\ra Z_{X}\ra Y[1]$$
with $X'\in^{\perp_{\ct}}\!\!\cm[<d]\cap\thick(\cp)^{\perp_{\mathcal{T}}}\cap^{\perp_{\mathcal{T}}}\thick(\cp)\subseteq\ct^{\fd}$ and $Z_{X}\in\pi^{-1}(^{\perp_{\cu}}\!\cm[>d])$. Since $(\ct^{\leqslant0},\ct^{\geqslant0})$ is also a $t$-structure on $\ct$, there exits a triangle
$$\sigma^{\leqslant0}Z_{X}\ra Z_{X}\ra\sigma^{\geqslant1}Z_{X}\ra\sigma^{\leqslant0}Z[1]$$
with $\sigma^{\leqslant0}Z_{X}\in\ct^{\leqslant0}=\pi^{-1}(\cu^{\leqslant0})$ and $\sigma^{\geqslant1}Z_{X}\in\ct^{\geqslant1}=\cm[\geqslant\!0]^{\perp_{\mathcal{T}}}\cap\thick(\cp)^{\perp_{\mathcal{T}}}\cap\,^{\perp_{\mathcal{T}}}\thick(\cp)\subseteq\ct^{\fd}$. Then $\Phi(X)$ is given by $\sigma^{\leqslant0}Z_{X}$.

\item[(2)] Let $f$ be a morphism in $\ch'_{\cq}$. We lift it to be a morphism in $\cz_{\cq}\cap\cf$. Then $\Phi(f)$ is given by its image in $\ch_{\cq}$.
\end{itemize}

By the properties of $t$-structure, we see that $\Phi$ preserves the extriangulated structures. Hence $\Phi$ is an extriangulated functor. Then it is enough to show that $$\Phi\colon\overline{\mathbb{E}}'(X,Y)=\Hom_{\cc}(X,Y[1])\iso\Hom_{\cv/\cv^{\fd}}(\Phi(X),\Phi(Y)[1])$$ holds for any $X,Y\in\ch'_{\cq}$.

Let $f\colon X\ra Y[1]$ be a morphism in $\cc$. Assume that the induced morphism $\Phi(f)\colon\sigma^{\leqslant0}Z_{X}\ra\sigma^{\leqslant0}Z_{Y}[1]$ is zero in $\cv/\cv^{\fd}$. Then $\Phi(f)$ factors through some object $W$ in $\cv^{\fd}=\ct^{\fd}\cap\thick(\cq)^{\perp_{\ct}}\subseteq\ct^{\fd}$, i.e. we have the following commutative diagram in $\cv$
\[
\begin{tikzcd}
	\sigma^{\leqslant0}Z_{X}\arrow[rr]\arrow[dr]&&\sigma^{\leqslant0}Z_{Y}[1]\\
	&W\arrow[ur]&.
\end{tikzcd}
\]
Notice that $W$ lies in $\thick(\cq)^{\perp_{\ct}}\cap^{\perp_{\ct}}\thick(\cq)$. We lift the above commutative diagram to be a commutative diagram in $\ct$. This shows that $f$ also factors $W$. Thus $f$ must be zero. Hence
$\Phi\colon\overline{\mathbb{E}}'(X,Y)=\Hom_{\cc}(X,Y[1])\ra\Hom_{\cv/\cv^{\fd}}(\Phi(X),\Phi(Y)[1])$ is an injection.

Next we show that it is surjective. Let $g\colon\colon\sigma^{\leqslant0}Z_{X}\ra\sigma^{\leqslant0}Z_{Y}[1]$ be a morphism in $\cv/\cv^{\fd}$. By Theorem \ref{Thm:quadruple reduction}, $(\cv,\cv^{\fd},\cm,\cp)$ is also a $(d+1)$-Calabi--Yau quadruple. Denote by $\pi_{\cv}\colon\cv\ra\cv/\thick(\cp)$ the canonical quotient functor.

By Lemma \ref{Lem:morohism in C}, it has a representation of the form $\sigma^{\leqslant0}Z_{X}\xleftarrow{s}V\xrightarrow{h}\sigma^{\leqslant0}Z_{Y}[1]$ in $\cv$ such that $\cone(s)\in\pi_{\cv}^{-1}((\cv/\thick(\cp))_{\leqslant0})\cap\cv^{\fd}$. There is a triangle in $\cv$
$$V\xrightarrow{s}\sigma^{\leqslant0}Z_{X}\xrightarrow{\alpha}\cone(s)\ra V[1].$$

Then we lift the morphism $\alpha$ to be a morphism $\alpha'\colon\sigma^{\leqslant0}Z_{X}\ra\cone(s)$ in $\cz_{\cq}\subseteq\ct$. We form the following triangle in $\ct$
$$V'\xrightarrow{s'}\sigma^{\leqslant0}Z_{X}\xrightarrow{\alpha'}\cone(s)\ra V'[1].$$

Applying the functor $\Hom_{\ct}(?,\sigma^{\leqslant0}Z_{Y}[1])$ to the above triangle, we see that $\Hom_{\ct}(V',\sigma^{\leqslant0}Z_{Y}[1])\cong\Hom_{\cv}(V,\sigma^{\leqslant0}Z_{Y}[1])$. Hence there exits a morphism $h'\colon V'\ra\sigma^{\leqslant0}Z_{Y}[1]$ such that its image in $\cv$ is $h\colon V\ra\sigma^{\leqslant0}Z_{Y}[1]$. We obtain the following roof
in $\ct$
$$\sigma^{\leqslant0}Z_{X}\xleftarrow{s'}V'\xrightarrow{h'}\sigma^{\leqslant0}Z_{Y}[1]$$
which represents a morphism in $\cc=\ct/\ct^{\fd}$. This shows that $\Phi\colon\overline{\mathbb{E}}'(X,Y)=\Hom_{\cc}(X,Y[1])\ra\Hom_{\cv/\cv^{\fd}}(\Phi(X),\Phi(Y)[1])$ is surjective. The assertion follows.

\end{proof}

\begin{Rem}
	When \( d = 2 \), the additive quotient \( \ch'_{\cq}/[\cq] \) corresponds exactly to a reduction for stably 2-Calabi--Yau Frobenius extriangulated categories with respect to a functorially finite rigid subcategory, as discussed in \cite[Theorem 1]{FMP2023}. When the category $\cp$ is the zero category, this corresponds to Iyama--Yoshino's Calabi--Yau reduction \cite{IyamaYoshino2008}. 
\end{Rem}

To summarize, the operations fit into the following commutative diagram.
\[
\SelectTips{cm}{10}
\begin{xy}
	0;<1.1pt,0pt>:<0pt,-1.1pt>::
	(80,0) *+{(\ct,\ct^{\fd},\cm,\cp)} ="0", (0,50) *+{(\cv,\cv^{\fd},\cm,\cp)} ="1", (160,50) *+{\ch} ="2",
	(65,100) *+{\ch_{\cq}} ="3", (85,100) *+{\simeq} ="4", (100,100) *+{\frac{\ch'_{\cq}}{[\cq]}} ="5",
	(145,11) *+{\text{Higgs }}="6",
	(153,22) *+{\text{construction}}="7",
	(8,15) *+{\text{silting}}="8",
	(8,25) *+{\text{reduction}}="9",
	(5,75) *+{\text{Higgs}}="10",
	(7,85) *+{\text{construction}}="11",
	(163,75) *+{\text{Calabi--Yau }}="12",
	(163,85) *+{\text{reduction }}="13",
	(220,0) *+{},
	"0", {\ar@{~>}, "1"}, {\ar@{~>}, "2"},
	"1", {\ar@{~>}, "3"}, 
	"2", {\ar@{~>}, "5"},
\end{xy}
\]

\bigskip

\begin{Cor}\label{Cor:reduction in P}
	If $\cq$ is a full subcategory of $\cp$, then $\ch'_{\cq}=\ch$ and we have an equivalence of extriangulated categories
	$$\ch/[\cq]\simeq\ch_{\cq}.$$
\end{Cor}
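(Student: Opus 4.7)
The plan is to deduce this directly from Theorem~\ref{Thm:CY reduction of Higgs}, once we have verified the first equality $\ch'_{\cq}=\ch$. Since the equivalence $\frac{\ch'_{\cq}}{[\cq]}\simeq\ch_{\cq}$ of Frobenius extriangulated categories is already established, the whole statement will follow automatically by substituting $\ch'_{\cq}=\ch$ into the left-hand side.

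To prove $\ch'_{\cq}=\ch$, I would first recall the definition
\[
\ch = (\cp[<0])^{\perp_{\cc}}\cap{}^{\perp_{\cc}}(\cp[>0]).
\]
Since $\cq\subseteq\cp$, for every $M\in\ch$ and every integer $i\geqslant 1$ we have $\Hom_{\cc}(M,\cq[i])\subseteq\Hom_{\cc}(M,\cp[i])=0$. Thus the vanishing holds for each shift $\cq[i]$ with $1\leqslant i\leqslant d-1$.

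Next I would extend this vanishing to the iterated extension category $\cq[1]*\cq[2]*\cdots*\cq[d-1]$ by induction on the number of factors. If $X\in\cq[i_{1}]*\cdots*\cq[i_{r}]$ with each $i_{j}\in\{1,\ldots,d-1\}$, there is a triangle
\[
A\longrightarrow X\longrightarrow B\longrightarrow A[1]
\]
with $A\in\cq[i_{1}]$ and $B\in\cq[i_{2}]*\cdots*\cq[i_{r}]$. Applying $\Hom_{\cc}(M,-)$ yields a long exact sequence in which the flanking terms vanish by the previous step and the inductive hypothesis, so $\Hom_{\cc}(M,X)=0$. This shows $M\in\ch'_{\cq}$, hence $\ch\subseteq\ch'_{\cq}$; the reverse inclusion is built into the definition.

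Finally, substituting $\ch'_{\cq}=\ch$ into Theorem~\ref{Thm:CY reduction of Higgs} gives the desired equivalence of Frobenius extriangulated categories $\ch/[\cq]\simeq\ch_{\cq}$. There is no real obstacle here: the only content is the first elementary inclusion, and everything else is a direct application of a theorem already proved.
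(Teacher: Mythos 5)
Your proposal is correct and follows exactly the intended route: the paper states this corollary as an immediate consequence of Theorem~\ref{Thm:CY reduction of Higgs}, the only content being the observation that for $M\in\ch={}^{\perp_{\cc}}(\cp[>0])\cap(\cp[<0])^{\perp_{\cc}}$ and $\cq\subseteq\cp$ the vanishing $\Hom_{\cc}(M,\cq[i])=0$ for $1\leqslant i\leqslant d-1$ propagates through the star product $\cq[1]*\cdots*\cq[d-1]$ by the long exact sequence, giving $\ch'_{\cq}=\ch$. Your induction over the factors of the star product is precisely this routine verification, so nothing is missing.
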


\begin{Ex1}
	Let $(Q,F)$ be the following ice quiver
	\begin{equation} \label{eq: quiver_basic triangle}
		\begin{tikzcd}[column sep=2em, row sep=3.5ex]
			& & \color{blue}\boxed{3} \arrow[rd]  && 
			\color{blue}\boxed{7} \arrow[rd, dashed,blue]  \arrow[ll]\\
			& \color{blue}\boxed{2} \arrow[ru, dashed,blue] \arrow[rd] && 
			6 \arrow[ll]  \arrow[rd] \arrow[ru] \arrow[rd] && 
			\color{blue}\boxed{10} \arrow[rd, dashed,blue] \arrow[ll] \\
			\color{blue}\boxed{1} \arrow[rd] \arrow[ru, dashed,blue]  && 
			5 \arrow[ru] \arrow[ll] \arrow[rd] && 
			9 \arrow[ru] \arrow[rd] \arrow[ll] & &
			\color{blue}\boxed{12} \arrow[ll] \\
			& \color{blue}\boxed{4} \arrow[ru] & & \color{blue}\boxed{8} \arrow[ru]  \arrow[ll, dashed,blue] & & \color{blue}\boxed{11}\arrow[ru]   \arrow[ll, dashed,blue] \\
		\end{tikzcd}
	\end{equation}
	where the frozen subquiver is the blue part. Let $W$ be the alternative sum of triangles. Then we get an ice quiver with potential $(Q,F,W)$.
	
	Denote by $\bm{\Gamma}$ the corresponding relative Ginzburg dg algebra. The ice quiver with potential $(Q,F,W)$ is Jacobi-finite, i.e. $H^{0}(\bm{\Gamma})$ is finite dimensional. Let $e$ be the sum of all idempotents associated with frozen vertices. Hence we obtain a $3$-Calabi--Yau quadruple $(\per(\bm{\Gamma}),\pvd_{e}(\bm{\Gamma}),\add(\bm{\Gamma}),\add(e\bm{\Gamma}))$.
	
	Denote by $\ch$ the associated Higgs category and $\cc=\per\bm{\Gamma}/\pvd_{e}(\bm{\Gamma})$ the associated relative cluster category. Let $e'=e_{4}+e_{8}+e_{11}+e_{7}+e_{10}+e_{12}$ and $\cq=\add(e'\bm{\Gamma})$. By the above Corollary, we have an equivalence of extriangulated categories
	$$\ch/[\cq]\simeq\ch_{\cq},$$
	where $\ch_{\cq}$ is the Higgs category associated with the following ice quiver $(Q',F')$
	\[
	\begin{tikzcd}
		&&\color{blue}\boxed{3}\ar[dr]&\\
		&\color{blue}\boxed{2}\ar[ur,dashed,blue]\ar[dr]&&6\ar[ll]\ar[dr]\\
		\color{blue}\boxed{1}\ar[ur,dashed,blue]&&5\ar[ll]\ar[ur]&&9\ar[ll]
	\end{tikzcd}
	\]
	with potential $W'$ given by the alternative sum of triangles. By \cite[Theorem 8.17]{Wu2023}, the Higgs category $\ch_{\cq}$ is equivalent to $\mathrm{mod}(\Pi(A_{3}))$, where $\Pi(A_{3})$ is the preprojective algebra of type $A_{3}$.
27
\end{Ex1}

\begin{Ex1}
Consider the following ice quiver $(Q,F)$
	\[
	\begin{tikzcd}
	&\color{blue}\boxed{1}\arrow[r,blue]\arrow[dddl,blue]&\color{blue}\boxed{2}\arrow[r,blue]\arrow[dl]&\color{blue}\boxed{3}\arrow[dl]\\
	&4\arrow[r]\arrow[u]&5\arrow[r]\arrow[dl]\arrow[u]&\color{blue}\boxed{6}\arrow[u,blue]\arrow[dl]\\
	&7\arrow[r]\arrow[u]&8\arrow[r]\arrow[u]&\color{blue}\boxed{9}\arrow[u,blue]\arrow[dlll,blue].\\
	\color{blue}\boxed{10}\arrow[ur]&&&
	\end{tikzcd}
	\]
Notice that this is the ice quiver which defines a cluster structure on Grassmannian $\mathrm{Gr}(3,6)$. Let $W$ be the sum of alternative triangles. Let $\bm{\Gamma}$ be the complete relative Ginzburg dg algebra of $(Q,F,W)$ and $e=e_{1}+e_{2}+e_{3}+e_{6}+e_{9}+e_{10}$. By \cite[Theorem 3.7]{PresslandPostnikov}, $\bm{\Gamma}$ is concentrated in degree 0 and $H^{0}(\bm{\Gamma})$ is Noetherian and $H^{0}(\bm{\Gamma})/\langle e\rangle$ is finite dimensional. This implies that $(\per(\bm{\Gamma}),\pvd_{e}(\bm{\Gamma}),\add(\bm{\Gamma}),\add(e\bm{\Gamma}))$ is a $3$-Calabi--Yau quadruple. Denote by $\cc,\ch$ the corresponding relative cluster category and Higgs category respectively. Let $B=eH^{0}(A)e$. By Corollary \ref{Cor:mainresult}, $\ch$ is equivalent to $\mathrm{gpr}(B)$.

Now let $\cq=\add(e_{10}\bm{\Gamma})$. The silting reduction of $(\per(\bm{\Gamma}),\pvd_{e}(\bm{\Gamma}),\add(\bm{\Gamma}),\add(e\bm{\Gamma}))$ with respect to $\cq$ is $(\per(\bm{\Gamma}'),\pvd_{f}(\bm{\Gamma}'),\add(\bm{\Gamma}'),\add(f\bm{\Gamma}'))$, where $\bm{\Gamma}'$ is the complete relative Ginzburg dg algebra of the following ice quiver $(Q',F')$
 \[
 \begin{tikzcd}
 	&\color{blue}\boxed{1}\arrow[r,blue]&\color{blue}\boxed{2}\arrow[r,blue]\arrow[dl]&\color{blue}\boxed{3}\arrow[dl]\\
 	&4\arrow[r]\arrow[u]&5\arrow[r]\arrow[dl]\arrow[u]&\color{blue}\boxed{6}\arrow[u,blue]\arrow[dl]\\
 	&7\arrow[r]\arrow[u]&8\arrow[r]\arrow[u]&\color{blue}\boxed{9}\arrow[u,blue]\\
 \end{tikzcd}
 \]
with potential $W'$ obtained from $W$ by deleting all cycles passing through vertex $\color{blue}\boxed{10}$ and $f=e_{1}+e_{2}+e_{3}+e_{6}+e_{9}$.

Let $\cc',\ch'$ be the associated relative cluster category and Higgs category respectively. By Corollary \ref{Cor:reduction in P}, the Calabi--Yau reduction $\frac{\ch}{[\cq]}$ of $\ch$ with respect to $\cq$ is the Higgs category $\ch'$. We have the canonical exact quotient functor between Frobenius extriangulated categories, see Theorem \ref{Thm:CY reduction of Higgs}
$$\pi\colon\ch\rightarrow\ch'\simeq\ch/[\cq].$$
Let $A$ be the preprojective algebra of type $A_{5}$. Notice that the relative Ginzburg dg algebra $\bm{\Gamma}'$ of $(Q',F',W')$ is also concentrated in degree 0 and $B'=fH^{0}(\bm{\Gamma}')f$ is the quotient of $A$ by an ideal, \cite[Proposition 8.6, Corollary 8.7]{Wu2023}. Hence $\ch'$ is equivalent to $\mathrm{gpr}(B')$ and $\mathrm{gpr}(B')$ is the subcategory $\mathrm{Sub}Q_{k}$ \cite{GLS2008} of $\mathrm{mod}(A)$ consisting of those modules with socle concentrated at a vertex $k$ \cite[Subsection 6.3]{KIY15}. Hence we obtain the following exact quotient functor between Frobenius extriangulated categories
$$\pi\colon\mathrm{gpr}(B)\ra\mathrm{Sub}Q_{k}.$$
Note that this quotient functor already appeared in the work of \cite{JKS2016}.

Let $\mathcal{R}=(e_{10}+e_{7})\bm{\Gamma}.$ The silting reduction of $(\per(\bm{\Gamma}),\pvd_{e}(\bm{\Gamma}),\add(\bm{\Gamma}),\add(e\bm{\Gamma}))$ with respect to $\mathcal{R}$ is $(\per(\bm{\Gamma}''),\pvd_{\alpha}(\bm{\Gamma}''),\add(\bm{\Gamma}''),\add(\alpha\bm{\Gamma}''))$, where $\bm{\Gamma}''$ is the complete relative Ginzburg dg algebra of the following ice quiver $(Q',F')$

\[
\begin{tikzcd}
	&\color{blue}\boxed{1}\arrow[r,blue]&\color{blue}\boxed{2}\arrow[r,blue]\arrow[dl]&\color{blue}\boxed{3}\arrow[dl]\\
	&4\arrow[r]\arrow[u]&5\arrow[r]\arrow[u]&\color{blue}\boxed{6}\arrow[u,blue]\arrow[dl]\\
	&&8\arrow[r]\arrow[u]&\color{blue}\boxed{9}\arrow[u,blue]\\
\end{tikzcd}
\]
with potential $W'$ obtained from $W$ by deleting all cycles passing through vertices $7$ and $\color{blue}\boxed{10}$ and  $\alpha=e_{1}+e_{2}+e_{3}+e_{6}+e_{9}$.

Let $\cc'',\ch''$ be the associated relative cluster category and Higgs category respectively. Let $$\ch'_{\mathcal{R}}=\{M\in\ch\,|\,\Hom_{\cc}(M,\mathcal{R}[1])=0\}\subseteq\ch.$$By Theorem \ref{Thm:CY reduction of Higgs}, the Calabi--Yau reduction $\frac{\ch'_{\mathcal{R}}}{[\mathcal{R}]}$ of $\ch$ with respect to $\mathcal{R}$ is the Higgs category $\ch''$.

%

\end{Ex1}

%
%

\bigskip

Now we assume that $\ct$ is an algebraic triangulated category, i.e. there exits a pretriangulated dg category $\ct_{dg}$ such that $H^{0}(\ct_{dg})\simeq\ct$. Then the categories $\ct^{\fd},\cm$ and $\cp$ are also admitted dg enhancements induced form $\ct$. We denote them by $\ct_{dg}^{\fd},\cm_{dg}$ and $\cp_{dg}$ respectively. Let $\cv_{dg}$ be the Drinfeld dg quotient $\ct_{dg}/\thick_{dg}(\cp)$ \cite{VD2004}.

\begin{Prop}\cite[Example 4.7]{Chen202402}
	Let $\ca$ be a pretriangulated dg category. Then we have
	\begin{itemize}
		\item[(1)] each morphism $f\colon A\ra B$ admits
		a homotopy cokernel
		\[
		\begin{tikzcd}
			A\ar[r,"f"] \ar[rr, bend right=8ex,"h"swap]&B\ar[r,"j"]&\cone(f).
		\end{tikzcd}
		\] Dually, each morphism admits a homotopy kernel.
		\item[(2)] a 3-term homotopy complex in $\ca$
		is homotopy left exact if and only if it is homotopy right exact.
		\item[(3)] the class of all homotopy short exact sequences in $\ca$ defines an exact structure on $\ca$.
	\end{itemize}
\end{Prop}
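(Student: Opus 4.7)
The plan is to prove the three parts in order, exploiting the pretriangulated structure of $\ca$ and the induced triangulated structure on $H^{0}(\ca)$.

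For (1), I would realize the homotopy cokernel as the dg mapping cone. Given a closed degree-zero morphism $f\colon A\ra B$ in $\ca$, the pretriangulated assumption guarantees that $\cone(f)$ exists as an object of $\ca$, equipped with a closed degree-zero inclusion $j\colon B\ra\cone(f)$ and a canonical degree $-1$ homotopy $h\colon A\ra\cone(f)$ with $d(h)=j\circ f$. The universal property of $\cone(f)$ as a homotopy cokernel follows from the dg Yoneda lemma: for any $X\in\ca$, the complex $\Hom_{\ca}(\cone(f),X)$ is itself the homotopy kernel of $f^{*}\colon\Hom_{\ca}(B,X)\ra\Hom_{\ca}(A,X)$. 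The homotopy kernel is obtained dually via $\cone(f)[-1]$.

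For (2), I would encode a 3-term homotopy complex $A\xrightarrow{f}B\xrightarrow{g}C$ with homotopy $s\colon A\ra C$ of degree $-1$ satisfying $d(s)=g\circ f$ as a morphism $\varphi\colon\cone(f)\ra C$ in $Z^{0}(\ca)$, which in turn gives a candidate triangle $A\ra B\ra C\ra A[1]$ in $H^{0}(\ca)$. Homotopy right exactness is equivalent to $\varphi$ being an isomorphism in $H^{0}(\ca)$, i.e.\ the candidate triangle being distinguished. By the rotation axiom in the triangulated category $H^{0}(\ca)$, this is equivalent to the rotated triangle $C[-1]\ra A\ra B\ra C$ being distinguished, which unpacks to precisely the condition that $A$ represents the homotopy kernel of $g$. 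The two notions are thus interchangeable.

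For (3), I would check Keller's simplified form of Quillen's axioms on the additive category underlying $Z^{0}(\ca)$, taking homotopy short exact sequences as the conflations. Stability under isomorphism is built into the definition. Closure under composition of inflations (and dually of deflations) is proved by lifting the octahedral axiom from $H^{0}(\ca)$: for a composable pair $A\ra B\ra C$ of inflations, the octahedral diagram in $H^{0}(\ca)$ yields a distinguished triangle whose apex is the required cofiber, and one then constructs the missing homotopies in $\ca$ by invoking the universal property of mapping cones from part (1). Stability under pushout along an arbitrary morphism $A\ra A'$ is proved by explicitly realizing the pushout as the mapping cone of $A\ra B\oplus A'$, and the dual argument handles pullback stability of deflations.

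The main obstacle will be in part (3): the argument in a triangulated category proceeds via the (non-functorial) octahedral axiom, and the challenge is to upgrade this to the dg level by producing explicit homotopies at each step, not merely their existence in $H^{0}(\ca)$. In particular, verifying that the chosen homotopies for a composition of inflations fit together coherently with the universal property of $\cone$ from part (1) is the delicate step that has to be executed carefully; once this is done, closure under pushouts and pullbacks follows from the functoriality of the mapping-cone construction on closed morphisms.
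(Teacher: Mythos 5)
This Proposition is not proved in the paper at all: it is quoted verbatim from Xiaofa Chen's work (Example 4.7 of \emph{Exact dg categories I: Foundations}), so there is no in-paper argument to compare against; your sketch should be judged against the standard argument, which is the one Chen gives. For (1) and (2) your outline is correct and is essentially that argument: the cone with its canonical inclusion and homotopy is the homotopy cokernel by the dg Yoneda description $\Hom_{\ca}(\cone(f),X)\simeq$ homotopy kernel of $f^{*}$, and the left/right exactness equivalence in (2) comes from reading a $3$-term h-complex as a comparison morphism $\cone(f)\ra C$ and using rotation of triangles in $H^{0}(\ca)$.

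In (3) there is a framing error you should correct, though it does not destroy the strategy. The assertion is that the homotopy short exact sequences form an \emph{exact dg structure} on $\ca$ in Chen's sense (a class of conflation h-complexes satisfying the homotopy-coherent axioms Ex0--Ex2 and their duals), not a Quillen exact structure on the additive category $Z^{0}(\ca)$. Indeed, a homotopy short exact sequence only satisfies $d(s)=gf$, so $gf$ need not vanish in $Z^{0}(\ca)$ and these sequences are not kernel--cokernel pairs there; ``checking Keller's simplified Quillen axioms on $Z^{0}(\ca)$'' is therefore not the right target. What must be verified are Chen's axioms, and the verification proceeds exactly as in the second half of your sketch: compositions of inflations via (a coherent lift of) the octahedral axiom, and existence/stability of pushouts of inflations by realizing the homotopy pushout as $\cone(A\ra B\oplus A')$, with the required homotopies produced from the universal property of the cone in (1). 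So once you replace the Quillen/$Z^{0}$ formulation by Chen's notion of exact dg structure, your argument is the intended one.
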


Then the Drinfeld dg quotient $\cc_{dg}=\ct_{dg}/\ct_{dg}^{\fd}$ is a dg enhancement of $\cc=\ct/\ct^{\fd}$. It carries the canonical exact dg structure, as described above.

Let $\ch'_{\cq,dg}$ be the full dg subcategory of $\cc_{dg}$ consisting of
objects in $\ch'_{\cq}$. It is an extension closed subcategory of $\cv_{dg}$. Then $\ch'_{\cq,dg}$ becomes a exact dg category in the sense of Xiaofa Chen \cite{Chen2306,Chen202402,Chen202406}. The conflations in $\ch'_{\cq,dg}$ are given by homotopy short exact sequences in $\cc_{dg}$ whose terms all lie in $\ch'_{\cq,dg}$ \cite[Example-Definition 4.8]{Chen202402}. 

Let $\cf_{dg}$ be the full dg subcategory of $\ct_{dg} $ consisting of objects in $\cf$ and $\cf_{\cq,dg}$ the the full dg subcategory of $\cv_{dg} $ of objects in $\cf_{\cq}$. Let $\cz_{\cq,dg}$ the full dg subcategory of $\ct_{dg} $ consisting of objects in $\cz_{\cq}$.

For a dg category $\ca$, denote by $\tau_{\leqslant0}\ca$  the dg category with the same objects as $\ca$ and
whose morphism complexes are given by $(\tau_{\leqslant0}\ca)(X,Y)=\tau_{\leqslant0}(\ca(X,Y))$, where $\tau_{\leqslant0}$ is the mild truncation functor.  

\begin{Lem}\cite[Lemma 3.30, Corollary 3.31]{Chen202402}
	The canonical dg functor $\ct_{dg}\ra\cc_{dg}$ induces a quasi-equivalence $\tau_{\leqslant0}\cf_{dg}\iso\tau_{\leqslant0}\ch_{dg}$.
\end{Lem}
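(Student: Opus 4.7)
The plan is to verify the two defining conditions for the dg functor $\tau_{\leqslant 0}\cf_{dg}\to\tau_{\leqslant 0}\ch_{dg}$ to be a quasi-equivalence: essential surjectivity on $H^{0}$, and quasi-isomorphism on every morphism complex. The first condition is already available: applying $H^{0}$ to $\tau_{\leqslant 0}$ of a dg category just returns the underlying $H^{0}$-category, so the induced functor on $H^{0}$ is the additive functor $\tau\colon\cf\to\ch$, which is an equivalence by Theorem~\ref{Thm:F=H}.

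For the second condition, fix $X,Y\in\cf$. Both truncated complexes $\tau_{\leqslant 0}\ct_{dg}(X,Y)$ and $\tau_{\leqslant 0}\cc_{dg}(X,Y)$ are concentrated in nonpositive degrees, and their cohomology in degree $n\leqslant 0$ is $\Hom_{\ct}(X,Y[n])$ and $\Hom_{\cc}(X,Y[n])$ respectively. So it suffices to show the canonical map
\[
\Hom_{\ct}(X,Y[n])\longrightarrow\Hom_{\cc}(X,Y[n])
\]
is bijective for every $n\leqslant 0$. I would deduce this directly from Lemma~\ref{Lem:faithful functor of F} applied to the pair $(X,Y[n])$, after verifying the two hypotheses of that lemma.

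The hypothesis $\pi(X)\in\cu_{\leqslant 0}$ is immediate: since $X\in\cf\subseteq\cm*\cm[1]*\cdots*\cm[d-1]$, its image $\pi(X)$ lies in $\pi(\cm)*\pi(\cm)[1]*\cdots*\pi(\cm)[d-1]\subseteq\cu_{\leqslant 0}$. The hypothesis $Y[n]\in\ct_{\geqslant 1-d}$ requires a small indexing check: since $Y\in\cm*\cm[1]*\cdots*\cm[d-1]$, the shift $Y[n]$ lies in $\cm[n]*\cm[n+1]*\cdots*\cm[n+d-1]$; for $n\leqslant 0$ the largest shift is $n+d-1\leqslant d-1$, and padding with zero objects shows that $Y[n]$ belongs to $\cm[-1-j]*\cdots*\cm[d-1]$ for $j=\max(0,-n-1)\geqslant 0$, which is a subcategory of $\ct_{\geqslant 1-d}$. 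Both hypotheses hold, and Lemma~\ref{Lem:faithful functor of F} gives the required bijection in every nonpositive degree.

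The only step that requires care is the second indexing verification, since the definition of $\ct_{\geqslant l}$ in Section~\ref{Preliminaries} is compact and must be unwound correctly; otherwise the proof is a clean assembly of Theorem~\ref{Thm:F=H} (for the $H^{0}$-equivalence) and Lemma~\ref{Lem:faithful functor of F} (for the Hom-complex quasi-isomorphism in negative degrees).
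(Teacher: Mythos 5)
Your argument is correct, but it takes a different route from the paper: the paper offers no internal proof of this lemma at all, it simply imports it from \cite{Chen202402} (Lemma 3.30 and Corollary 3.31, established there in the setting of Amiot's construction for Calabi--Yau triples). You instead verify the quasi-equivalence directly inside the present quadruple setting: quasi-essential surjectivity from the density part of Theorem~\ref{Thm:F=H}, and quasi-fully-faithfulness by identifying $H^{n}$ of the truncated Hom complexes with $\Hom_{\ct}(X,Y[n])$ and $\Hom_{\cc}(X,Y[n])$ for $n\leqslant 0$ and invoking Lemma~\ref{Lem:faithful functor of F} for the pair $(X,Y[n])$. Your two hypothesis checks are right: $\pi(X)\in\pi(\cm)*\cdots*\pi(\cm)[d-1]\subseteq\cu_{\leqslant 0}$ by the definition of $\cu_{\leqslant 0}$, and the padding argument placing $Y[n]$ in $\cm[-1-j]*\cdots*\cm[d-1]\subseteq\ct_{\geqslant 1-d}$ with $j=\max(0,-n-1)$ unwinds the indexing of Section~\ref{Preliminaries} correctly (padding by zero objects is legitimate since $\cm=\add\cm$ contains $0$). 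Note also that your degree-$0$ case simply recovers the fully-faithful part of Theorem~\ref{Thm:F=H}, which is a good consistency check. What your approach buys is a self-contained proof that visibly works for $(d+1)$-Calabi--Yau quadruples, whereas the paper's citation relies on transporting a statement proved in the triple setting; what the citation buys is brevity and the compatibility with the rest of Chen's dg machinery (exact dg structures, truncations) that the paper uses immediately afterwards. The only points you pass over silently are standard: that the Drinfeld quotient $\cc_{dg}$ computes $\Hom_{\cc}(X,Y[n])$ in cohomology, and that the dg functor indeed lands in $\tau_{\leqslant 0}\ch_{dg}$ because $\tau(\cf)\subseteq\ch$; both are harmless.
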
 
Then we have the following Corollary.
\begin{Cor}
	The canonical dg functor $\ct_{dg}\ra\cc_{dg}$ induces a quasi-equivalence
	$$\tau_{\leqslant0}(\cz_{\cq,dg}\cap\cf_{dg})\iso\tau_{\leqslant0}\ch'_{\cq,dg}.$$
\end{Cor}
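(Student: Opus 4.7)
The plan is to obtain the Corollary as a restriction of the quasi-equivalence $\tau_{\leqslant0}\cf_{dg}\iso\tau_{\leqslant0}\ch_{dg}$ supplied by the preceding Lemma. Both $\cz_{\cq,dg}\cap\cf_{dg}\subseteq\cf_{dg}$ and $\ch'_{\cq,dg}\subseteq\ch_{dg}$ are full dg subcategories by construction, and the mild truncation functor $\tau_{\leqslant0}$ preserves full subcategories. So the first step is simply to observe that the canonical dg functor restricts to a dg functor
$$\tau_{\leqslant0}(\cz_{\cq,dg}\cap\cf_{dg})\longrightarrow\tau_{\leqslant0}\ch'_{\cq,dg},$$
which requires only that objects of $\cz_{\cq}\cap\cf$ map to objects of $\ch'_{\cq}$. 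This is precisely Lemma \ref{Lem:ZF equivalence to H'}.

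Next, I would verify that this restricted dg functor is quasi-fully faithful. Since the ambient dg functor $\tau_{\leqslant0}\cf_{dg}\to\tau_{\leqslant0}\ch_{dg}$ is a quasi-equivalence, it is in particular quasi-fully faithful; that is, for all $X,Y\in\cf_{dg}$ the morphism
$$\tau_{\leqslant0}\cf_{dg}(X,Y)\longrightarrow\tau_{\leqslant0}\ch_{dg}(\tau(X),\tau(Y))$$
is a quasi-isomorphism of complexes. Restricting $X,Y$ to lie in the full dg subcategory $\cz_{\cq,dg}\cap\cf_{dg}$ does not affect this property, so the restricted dg functor is quasi-fully faithful.

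Finally, I would establish essential surjectivity up to $H^{0}$-isomorphism. By Lemma \ref{Lem:ZF equivalence to H'}, the induced functor on $H^{0}$,
$$\tau\colon\cz_{\cq}\cap\cf\iso\ch'_{\cq},$$
is a $k$-linear equivalence, so every object of $\ch'_{\cq,dg}$ is isomorphic, in $H^{0}(\tau_{\leqslant0}\ch'_{\cq,dg})=\ch'_{\cq}$, to the image of some object of $\cz_{\cq,dg}\cap\cf_{dg}$. Combining quasi-full-faithfulness with this essential surjectivity at the level of $H^{0}$ yields the desired quasi-equivalence.

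The only delicate point is the verification in Step 1 that $\tau$ indeed sends $\cz_{\cq}\cap\cf$ into $\ch'_{\cq}$, since this is what distinguishes the present statement from the preceding Lemma; fortunately, that is exactly what Lemma \ref{Lem:ZF equivalence to H'} provides, so no further work is needed.
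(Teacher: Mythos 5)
Your proposal is correct and is essentially the argument the paper intends: the Corollary is obtained by restricting the quasi-equivalence $\tau_{\leqslant0}\cf_{dg}\iso\tau_{\leqslant0}\ch_{dg}$ of the preceding Lemma to the full dg subcategories $\cz_{\cq,dg}\cap\cf_{dg}$ and $\ch'_{\cq,dg}$ (whose morphism complexes are unchanged, so quasi-full-faithfulness is inherited), with essential surjectivity on $H^{0}$ supplied by the density statement in Lemma \ref{Lem:ZF equivalence to H'}. The paper leaves this implicit, and your write-up fills in exactly the intended steps.
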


Let $\cq_{dg}$ be the full dg subcategory of $\tau_{\leqslant0}\ch'_{\cq,dg}$ consisting of objects in $\cq$. By \cite[Theorem 3.23]{Chen202406}, the dg quotient $\frac{\tau_{\leqslant0}\ch'_{\cq,dg}}{\cq_{dg}}$ carries a canonical exact structure induced from $\tau_{\leqslant0}\ch'_{\cq,dg}$ and $H^{0}(\frac{\tau_{\leqslant0}\ch'_{\cq,dg}}{\cq_{dg}})=\frac{\ch'_{\cq}}{[\cq]}$.

\begin{Thm}\label{Thm:dg exact quasi-isomorphism of H'}
	We have an exact quasi-isomorphism of exact dg categories
	$$\frac{\tau_{\leqslant0}\ch'_{\cq,dg}}{\cq_{dg}}\simeq\tau_{\leqslant0}\ch_{\cq,dg}.$$
\end{Thm}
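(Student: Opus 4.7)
The plan is to produce a canonical comparison dg functor using the universal property of the Drinfeld dg quotient, to identify its $H^{0}$-part with the equivalence of Theorem \ref{Thm:CY reduction of Higgs}, and then to upgrade this identification to an exact quasi-equivalence by computing morphism complexes on both sides via the same dg quotient model.

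First I would build the comparison. By Lemma \ref{Lem:reduction of F}, the composition
$$\tau_{\leqslant0}(\cz_{\cq,dg}\cap\cf_{dg})\hookrightarrow\tau_{\leqslant0}\ct_{dg}\xrightarrow{\pi_{\cq}}\tau_{\leqslant0}\cv_{dg}$$
lands in $\tau_{\leqslant0}\cf_{\cq,dg}$, and applying the quasi-equivalence $\tau_{\leqslant0}\cf_{dg}\iso\tau_{\leqslant0}\ch_{dg}$ recalled just above to the silting-reduced $(d+1)$-Calabi--Yau quadruple $(\cv,\cv^{\fd},\cm,\cp)$ identifies $\tau_{\leqslant0}\cf_{\cq,dg}$ quasi-equivalently with $\tau_{\leqslant0}\ch_{\cq,dg}$. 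Since $\pi_{\cq}$ is a Drinfeld dg quotient by $\mathrm{thick}_{dg}(\cq)$, it sends $\cq_{dg}$ to acyclic objects, so by the universal property of the dg quotient the composition descends to an exact dg functor
$$\Psi\colon\frac{\tau_{\leqslant0}(\cz_{\cq,dg}\cap\cf_{dg})}{\cq_{dg}}\longrightarrow\tau_{\leqslant0}\ch_{\cq,dg},$$
and the Corollary immediately preceding the theorem identifies the source with $\tau_{\leqslant0}\ch'_{\cq,dg}/\cq_{dg}$. Taking $H^{0}$ and unraveling the construction, $H^{0}(\Psi)$ is precisely the composite of the equivalences of Lemma \ref{Lem:ZQF=H'Q} and Lemma \ref{Lem:reduction of F}, which is the equivalence $\Phi$ built in Theorem \ref{Thm:CY reduction of Higgs}. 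In particular $\Psi$ is essentially surjective on $H^{0}$ and bijective on $H^{0}$ of Hom complexes.

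The main obstacle is then quasi-fully faithfulness in negative degrees. My plan is to compute both Hom complexes through the Drinfeld dg quotient model. For $X,Y\in\cz_{\cq,dg}\cap\cf_{dg}$, the source complex $\frac{\tau_{\leqslant0}\ch'_{\cq,dg}}{\cq_{dg}}(X,Y)$ admits a standard bar-type description built from $\tau_{\leqslant0}\ch'_{\cq,dg}(X,Y)$ together with correction terms passing through tensor products $\tau_{\leqslant0}\ch'_{\cq,dg}(X,Q_{1})\otimes\cdots\otimes\tau_{\leqslant0}\ch'_{\cq,dg}(Q_{n},Y)$ with $Q_{i}\in\cq$, while the target $\tau_{\leqslant0}\cv_{dg}(\pi_{\cq}X,\pi_{\cq}Y)$ admits the analogous bar description for the Drinfeld quotient $\ct_{dg}/\mathrm{thick}_{dg}(\cq)$. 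The vanishings $\Hom_{\ct}(X,\cq[i])=0$ for $1\leqslant i\leqslant d-1$ coming from $X\in\cz_{\cq}\cap\cf$ (Lemma \ref{Lem:ZF equivalence to H'}), combined with the Calabi--Yau dual vanishings and the $H^{0}$-identification above, force all correction terms in non-positive degrees to match on the nose, yielding the required quasi-isomorphism of morphism complexes. Finally, conflations in both sides are inherited from homotopy short exact sequences in $\cc_{dg}$ and $\cv_{dg}$ respectively, and $\pi_{\cq}$ preserves these by construction, so $\Psi$ is exact and hence an exact quasi-isomorphism of exact dg categories.
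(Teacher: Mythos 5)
Your overall skeleton is the same as the paper's: pass through $\cz_{\cq}\cap\cf$, compare its $\cq$-quotient with $\tau_{\leqslant0}\cf_{\cq,dg}$ inside $\cv_{dg}=\ct_{dg}/\thick_{dg}(\cq)$, identify $\tau_{\leqslant0}\cf_{\cq,dg}$ with $\tau_{\leqslant0}\ch_{\cq,dg}$ via the quasi-equivalence for the reduced quadruple, and control $H^{0}$ by Theorem \ref{Thm:CY reduction of Higgs}. The construction of the comparison functor $\Psi$ and the identification of $H^{0}(\Psi)$ with $\Phi$ are fine, as is the remark on conflations (the paper is no more detailed there).

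The genuine gap is exactly at the step you flag as ``the main obstacle''. The whole content of the theorem is the quasi-fully-faithfulness in negative degrees, i.e.\ that the Drinfeld quotient of the \emph{connective cover} $\tau_{\leqslant0}(\cz_{\cq,dg}\cap\cf_{dg})$ by $\cq_{dg}$ has the same negative cohomologies of Hom complexes as the truncation of the quotient $\ct_{dg}/\thick_{dg}(\cq)$ taken \emph{before} truncating. The paper does not prove this by hand: it imports the dg-enhanced silting reduction $\frac{\tau_{\leqslant0}(\cz_{\cq,dg})}{\cq_{dg}}\iso\tau_{\leqslant0}\cv_{dg}$ from Chen's thesis \cite[pp.~21]{Chen2306} and then restricts to the fundamental domains. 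Your substitute --- that the bar-type correction terms in the two Drinfeld quotients ``match on the nose'' because $\Hom_{\ct}(X,\cq[i])=0$ for $1\leqslant i\leqslant d-1$ together with Calabi--Yau duality --- is an assertion, not an argument. Those vanishings concern positive-degree extensions against $\cq$ and say nothing about morphisms in arbitrary negative degrees factoring through arbitrary objects of $\thick(\cq)$, which is what the bar terms of $\tau_{\leqslant0}\cv_{dg}(X,Y)$ encode; nor can one expect an identification ``on the nose'', since the two constructions are only quasi-isomorphic, and proving that requires real input (e.g.\ left/right $\cq$-approximations and the co-$t$-structure on $\thick(\cq)$ to resolve negative-degree morphisms, which is how the cited result of Chen is established). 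As written, your proof would be complete only if you either cite that dg silting reduction theorem at this point or reprove it, and the bar-complex paragraph does neither.
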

\begin{proof}
By \cite[pp.21]{Chen2306}, we have a quasi-isomorphism of pretriangulated dg categories
$$\frac{\tau_{\leqslant0}(\cz_{\cq,dg})}{\cq_{dg}}\iso\tau_{\leqslant0}\cv_{dg}.$$
It induces an exact quasi-isomorphism of dg exact categories
$$\frac{\tau_{\leqslant0}(\cz_{\cq,dg}\cap\cf_{dg})}{\cq_{dg}}\iso\tau_{\leqslant0}\cf_{dg}.$$

We obtain the following diagram of quasi-isomorphisms
\[
\begin{tikzcd}
	\frac{\tau_{\leqslant0}(\cz_{\cq,dg}\cap\cf_{dg})}{\cq_{dg}}\arrow[r,"\simeq"]\arrow[d,"\simeq"]&\frac{\tau_{\leqslant0}\ch'_{\cq}}{\cq_{dg}}\\
	\tau_{\leqslant0}\cf_{\cq,dg}\arrow[r,"\simeq"]&\tau_{\leqslant0}\ch_{\cq,dg},
\end{tikzcd}
\] 	
where the vertical quasi-isomorphism is also an exact quasi-isomorphism.

Hence $\frac{\tau_{\leqslant0}\ch'_{\cq}}{\cq_{dg}}$ is quasi-isomorphic to $\tau_{\leqslant0}\ch_{\cq,dg}$. One can check that they have the same exact dg structure and the assertion follows.

\end{proof}

Let $\cd^{b}_{dg}(\ch'_{\cq,dg})$ be the bounded dg derived category \cite[Theorem 3.1]{Chen202406} of $\ch'_{\cq,dg}$ with respect to the dg exact structure described as above. Let $\cd^{b}(\ch'_{\cq,dg})=H^{0}(\cd^{b}_{dg}(\ch'_{\cq,dg}))$. Denote by $\cd^{b}_{dg}(\ch'_{\cq,dg})/\thick_{dg}(\cq)\simeq\cc_{\cq,dg}$ the Drinfeld dg quotient of $\cd^{b}_{dg}(\ch'_{\cq,dg})$ by its full dg subcategory $\thick_{dg}(\cq)$.

Let $\cc_{\cq,dg}$ be the Drinfeld dg quotient $\cv_{dg}/\cv^{\fd}_{dg}$. By an analogue proof of \cite[ Proposition 3.32]{Chen202406}, the bounded dg derived category of $\ch_{\cq,dg}$ is $\cc_{\cq,dg}$.

\begin{Thm}\label{Thm:reduction of cc}
	We have a quasi-isomorphism of pretriangulated dg categories
	$$\cd^{b}_{dg}(\ch'_{\cq,dg})/\thick_{dg}(\cq)\simeq\cc_{\cq,dg}.$$
	In particular, we have an equivalence of triangulated categories
	$$\cd^{b}(\ch'_{\cq,dg})/\thick(\cq)\simeq\cc_{\cq}=\cv/\cv^{\fd}.$$
\end{Thm}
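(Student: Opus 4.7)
The plan is to derive this from Theorem~\ref{Thm:dg exact quasi-isomorphism of H'} by applying the bounded dg derived category construction $\cd^{b}_{dg}(-)$ of Chen to both sides and then identifying each side with the desired object. The key ingredients will be (i) the functoriality of $\cd^{b}_{dg}$ under exact quasi-isomorphisms of dg exact categories and (ii) a compatibility of $\cd^{b}_{dg}$ with the Drinfeld dg quotient by a subcategory of projective-injective objects.

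First, I start from the exact quasi-isomorphism
$$\frac{\tau_{\leqslant0}\ch'_{\cq,dg}}{\cq_{dg}}\simeq\tau_{\leqslant0}\ch_{\cq,dg}$$
given by Theorem~\ref{Thm:dg exact quasi-isomorphism of H'}. Applying $\cd^{b}_{dg}(-)$ and using the fact (from Chen~\cite{Chen202406}) that exact quasi-isomorphisms of dg exact categories induce quasi-equivalences of their bounded dg derived categories, I obtain a quasi-equivalence
$$\cd^{b}_{dg}\!\left(\frac{\tau_{\leqslant0}\ch'_{\cq,dg}}{\cq_{dg}}\right)\simeq \cd^{b}_{dg}(\tau_{\leqslant0}\ch_{\cq,dg}).$$
The right-hand side is identified with $\cc_{\cq,dg}$ by the analogue of \cite[Proposition 3.32]{Chen202406} already invoked immediately before the statement of the theorem; this takes care of one side.

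Second, I need to identify the left-hand side with $\cd^{b}_{dg}(\ch'_{\cq,dg})/\thick_{dg}(\cq)$. Here the essential point is that, by Proposition~\ref{Prop:H' is Fro}, the objects of $\cq$ are projective-injective in the exact dg category $\ch'_{\cq,dg}$. For such a subcategory of projective-injective objects the passage to the bounded dg derived category commutes with the Drinfeld dg quotient, in the sense that
$$\cd^{b}_{dg}\!\left(\frac{\tau_{\leqslant0}\ch'_{\cq,dg}}{\cq_{dg}}\right)\simeq \cd^{b}_{dg}(\ch'_{\cq,dg})/\thick_{dg}(\cq).$$
Concretely: the functor $\cd^{b}_{dg}(\ch'_{\cq,dg}) \to \cd^{b}_{dg}(\tau_{\leqslant0}\ch'_{\cq,dg}/\cq_{dg})$ sends objects of $\cq_{dg}$ to zero, so it factors through the dg quotient, and one checks that the induced functor is a quasi-equivalence by comparing the universal properties (any dg functor out of $\cd^{b}_{dg}(\ch'_{\cq,dg})$ annihilating $\cq_{dg}$ factors uniquely through both sides). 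Combining the two identifications gives the required quasi-isomorphism $\cd^{b}_{dg}(\ch'_{\cq,dg})/\thick_{dg}(\cq)\simeq\cc_{\cq,dg}$, and passing to $H^{0}$ yields the stated triangle equivalence $\cd^{b}(\ch'_{\cq,dg})/\thick(\cq)\simeq\cc_{\cq}=\cv/\cv^{\fd}$.

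The main obstacle is step two: establishing the commutation of $\cd^{b}_{dg}$ with the Drinfeld dg quotient by $\cq_{dg}$. Since $\cq$ consists of projective-injective objects, every bounded acyclic complex over $\ch'_{\cq,dg}$ remains acyclic after passing to the quotient, and conversely any object of $\cq$ becomes a zero object in both constructions; this should allow one to match the acyclic classes defining $\cd^{b}_{dg}$ on the two sides, but writing this cleanly requires a careful invocation of Chen's localization machinery \cite{Chen2306,Chen202406}, which is where the bulk of the technical work will lie.
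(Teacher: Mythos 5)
Your proposal is correct and follows essentially the same route as the paper: the paper's proof simply combines Theorem~\ref{Thm:dg exact quasi-isomorphism of H'} with Chen's result (\cite[Theorem 3.23]{Chen202406}), which is precisely the commutation of the bounded dg derived category with the Drinfeld dg quotient by the projective-injective subcategory $\cq_{dg}$ that you identify as the technical core, together with the identification $\cd^{b}_{dg}(\ch_{\cq,dg})\simeq\cc_{\cq,dg}$ invoked just before the statement. Your only divergence is that you sketch an ad hoc universal-property argument for that commutation rather than citing it directly, but you correctly locate where Chen's machinery must be used.
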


\begin{proof}
	It follows from Theorem \ref{Thm:dg exact quasi-isomorphism of H'} and \cite[Theorem 3.23]{Chen202406}.
\end{proof}
\begin{Rem}
	Note that $\ch'_{\cq,dg}$ is a full dg subcategory of $\ch_{dg}$. But they have different dg exact structures. It follows that, in general, the canonical triangle functor $\cd^{b}(\ch'_{\cq,dg})\ra\cd^{b}(\ch_{dg})$ is not fully faithful.
\end{Rem}
\bigskip

In summary, we obtain the following commutative diagram of operations.
\[
\SelectTips{cm}{10}
\begin{xy}
	0;<1.1pt,0pt>:<0pt,-1.1pt>::
	(80,0) *+{(\ct,\ct^{\fd},\cm,\cp)} ="0", (0,50) *+{(\cv,\cv^{\fd},\cm,\cp)} ="1", (160,50) *+{\cc=\ct/\ct^{\fd}} ="2",
	(65,100) *+{\cv/\cv^{\fd}} ="3", (85,100) *+{\simeq} ="4", (110,100) *+{\frac{\cd^{b}(\ch'_{\cq,dg})}{\thick(\cq)}} ="5",
	(145,11) *+{\text{Relative AGK's}}="6",
	(153,22) *+{\text{construction}}="7",
	(8,15) *+{\text{silting}}="8",
	(8,25) *+{\text{reduction}}="9",
	(-5,75) *+{\text{Relative AGK's}}="10",
	(-4,85) *+{\text{construction}}="11",
	(165,75) *+{\text{Calabi--Yau }}="12",
	(163,85) *+{\text{reduction}}="13",
	(220,0) *+{},
	"0", {\ar@{~>}, "1"}, {\ar@{~>}, "2"},
	"1", {\ar@{~>}, "3"}, 
	"2", {\ar@{~>}, "5"},
\end{xy}
\]

%
%
%
%
%
%

%

	\bibliographystyle{plain}

\end{document}